\documentclass[11pt, a4paper]{article}
\usepackage[utf8]{inputenc}
\usepackage{mathtools}
\usepackage{amsmath}
\usepackage{amsthm}
\usepackage{algorithm}
\usepackage{algorithmic}
\usepackage{amssymb}
\usepackage{epsfig}
\usepackage{titlesec}
\allowdisplaybreaks

\usepackage{stackengine}
\mathtoolsset{showonlyrefs}
\usepackage{xcolor}
\usepackage[a4paper, margin=2.5cm]{geometry}
\usepackage{comment}
\usepackage{mathrsfs} 
\usepackage[export]{adjustbox}

\usepackage{multirow}

\usepackage{dsfont}
\usepackage{fixltx2e}
\usepackage{float}
\usepackage{mathrsfs}
\usepackage{amssymb,amsmath,amsthm,amsfonts}
\usepackage{mathtools}
\usepackage{fixmath}
\usepackage{bbm}
\usepackage{braket}
\usepackage{caption}
\usepackage{subcaption}
\usepackage{enumitem}
\usepackage{units}
\usepackage{xcolor}
\usepackage{booktabs}
\usepackage[english]{babel}
\usepackage[T1]{fontenc}
\usepackage{lmodern}
\usepackage{tikz}
\usepackage{pgfplots}
\usepackage{algorithm}
\usepackage{algorithmic}
\usepackage{comment}

\usepackage[final,stretch=10]{microtype}
\usepackage[colorlinks, citecolor=hanblue,linkcolor=black]{hyperref}
\definecolor{hanblue}{rgb}{0.27, 0.42, 0.81}
\usepackage{todonotes}

\newcommand{\N}{\mathbb{N}}

\newcommand{\R}{\mathbb{R}}

\newcommand{\Ext}{\operatorname{Ext}}

\newcommand{\RR}{\mathbb{R}}      % for Real numbers
\newcommand{\tr}[1]{tr(C(1)^{-1})}

\newcommand{\vertiii}[1]{{\left\vert\kern-0.25ex\left\vert\kern-0.25ex\left\vert #1
		\right\vert\kern-0.25ex\right\vert\kern-0.25ex\right\vert}}

\newcommand{\ds}{\displaystyle}

\newcommand{\supp}{{\rm Supp} (\mu_0)}

\newcommand{\dm}{\mathrm{d}}

\renewcommand{\leq}{\leqslant}
\renewcommand{\geq}{\geqslant}

\makeatletter
\DeclareFontFamily{OMX}{MnSymbolE}{}
\DeclareSymbolFont{MnLargeSymbols}{OMX}{MnSymbolE}{m}{n}
\SetSymbolFont{MnLargeSymbols}{bold}{OMX}{MnSymbolE}{b}{n}
\DeclareFontShape{OMX}{MnSymbolE}{m}{n}{
	<-6>  MnSymbolE5
	<6-7>  MnSymbolE6
	<7-8>  MnSymbolE7
	<8-9>  MnSymbolE8
	<9-10> MnSymbolE9
	<10-12> MnSymbolE10
	<12->   MnSymbolE12
}{}
\DeclareFontShape{OMX}{MnSymbolE}{b}{n}{
	<-6>  MnSymbolE-Bold5
	<6-7>  MnSymbolE-Bold6
	<7-8>  MnSymbolE-Bold7
	<8-9>  MnSymbolE-Bold8
	<9-10> MnSymbolE-Bold9
	<10-12> MnSymbolE-Bold10
	<12->   MnSymbolE-Bold12
}{}

\let\llangle\@undefined
\let\rrangle\@undefined
\DeclareMathDelimiter{\llangle}{\mathopen}%
{MnLargeSymbols}{'164}{MnLargeSymbols}{'164}
\DeclareMathDelimiter{\rrangle}{\mathclose}%
{MnLargeSymbols}{'171}{MnLargeSymbols}{'171}
\makeatother

\makeatletter
\newcommand*\rel@kern[1]{\kern#1\dimexpr\macc@kerna}
\newcommand*\widebar[1]{%
	\begingroup
	\def\mathaccent##1##2{%
		\rel@kern{0.8}%
		\overline{\rel@kern{-0.8}\macc@nucleus\rel@kern{0.2}}%
		\rel@kern{-0.2}%
	}%
	\macc@depth\@ne
	\let\math@bgroup\@empty \let\math@egroup\macc@set@skewchar
	\mathsurround\z@ \frozen@everymath{\mathgroup\macc@group\relax}%
	\macc@set@skewchar\relax
	\let\mathaccentV\macc@nested@a
	\macc@nested@a\relax111{#1}%
	\endgroup
}
\makeatother

\numberwithin{equation}{section}

\definecolor{darkred}{rgb}{.7,0,0}

\definecolor{green}{rgb}{0,0.7,0}

\usepackage[notref,notcite]{}
\theoremstyle{plain}

\newtheorem{theorem}{Theorem}[section]
\newtheorem{Lemma}[theorem]{Lemma}

\newtheorem{proposition}[theorem]{Proposition}

\theoremstyle{definition} 

\newtheorem{assumption}[theorem]{Assumption}
\newtheorem{definition}[theorem]{Definition}

\newtheorem{remark}[theorem]{Remark}

\usepackage{scalerel}

\title{A Dual Certificate Approach to Sparsity in Infinite-Width Shallow Neural Networks 
\footnotetext{2020 Mathematics Subject Classification: 46A55, 49K27, 49N15, 49Q22, 52A40, 54E35}
\footnotetext{Keywords: convex optimization,  Dirac deltas, duality, neural networks,  regions, sparsity, stability}}
\author{Leonardo Del Grande \!\!\thanks{Department of Applied Mathematics, University of Twente, 7500AE Enschede, The Netherlands \\
(\texttt{l.delgrande{@}utwente.nl}, \texttt{c.brune{@}utwente.nl}, \texttt{m.c.carioni@utwente.nl})} , Christoph Brune\footnotemark[1] , Marcello Carioni\footnotemark[1]
}
\date{}

\begin{document}
 
	%\title{Exact Sparse Reconstruction In General Banach Spaces Using a Metric Non-Degenerate Source Condition}
	
	%\pagestyle{myheadings}
	
	%\author[M. Carioni]
	%{Marcello Carioni}
	%\address[Marcello Carioni]{University of Twente, Department of Mathematics for Imaging and AI, Zilverling, 7500 AE Enschede, NL
		%}
	%\email{m.c.carioni@utwente.nl}

 %\author[L. Del Grande]
%	{Leonardo Del Grande}
%	\address[Leonardo Del Grande]{% the University of Twente, Department of Mathematics for Imaging and AI, Zilverling, 7500 AE Enschede, NL
		%}
	%\email{l.delgrande@utwente.nl}
	\maketitle

\begin{abstract}
\noindent 
In this paper, we study total variation (TV)-regularized training of infinite-width shallow ReLU neural networks, formulated as a convex optimization problem over measures on the unit sphere. Our approach leverages the duality theory of TV-regularized optimization problems to establish rigorous guarantees on the sparsity of the solutions to the training problem. Our analysis further characterizes how and when this sparsity persists in a low noise regime and for small regularization parameter.
The key observation that motivates our analysis is that, for ReLU activations, the associated dual certificate is piecewise linear in the weight space. Its linearity regions, which we name dual regions, are determined by the activation patterns of the data via the induced hyperplane arrangement. Taking advantage of this structure, we prove that, on each dual region, the dual certificate admits at most one extreme value. As a consequence, the support of any minimizer is finite, and its cardinality can be bounded from above by a constant depending only on the geometry of the data-induced hyperplane arrangement. Then, we further investigate sufficient conditions ensuring uniqueness of such sparse solution. Finally, under a suitable non-degeneracy condition on the dual certificate along the boundaries of the dual regions, we prove that in the presence of low label noise and for small regularization parameter, solutions to the training problem remain sparse with the same number of Dirac deltas. Additionally, their location and the amplitudes converge, and, in case the locations lie in the interior of a dual region, the convergence happens with a rate that depends linearly on the noise and the regularization parameter.
\end{abstract}
\section{Introduction}

Two-layer neural networks with $K$ \emph{neurons} are parametrized functions $F_{c,w,b}:\mathbb{R}^d\rightarrow \RR$ defined as 
\begin{equation}\label{2NN}
    F_{c,w,b}(x)=\sum_{i=1}^K c^i \sigma(\langle w^i, x\rangle + b^i),
\end{equation}
where $\sigma : \mathbb{R}\rightarrow \mathbb{R}$ is the so-called \emph{activation function}, while $c^i \in \mathbb{R}$ and $w^i \in \mathbb{R}^d$ are the weights for the output and hidden layers of the network, and $b^i \in \mathbb{R}$ are the biases. Following the approach in \cite{bach2017breaking, weinan2022representation} one can consider the formal limit for $K\rightarrow +\infty$, i.e. when the number of neurons tends to infinity and replace the sum in \eqref{2NN} by the integration over a measure taking values on the space of weights and biases. In particular, one defines
\begin{align}\label{eq:infinitewidth}
    F_{\mu}(x) = \int_{\Theta} \sigma(\langle w,x\rangle + b) \, d\mu(w,b)
\end{align}
where $(w,b) \in \Theta$ and $\mu \in M(\Theta)$. Problem \eqref{eq:infinitewidth} extends \eqref{2NN} since by choosing measures $\mu$ of the form $\mu = \sum_{i=1}^N c^i\delta_{(w^i,b^i)}$ one recovers instances of \eqref{2NN}. Moreover, despite being infinite-dimensional it is convex in the space of measures \eqref{eq:infinitewidth}, allowing for insights facilitated by the tools of convex analysis (see for instance \cite{pikka}). As often considered in the available literature, the bias $b$ is reabsorbed into the weight $w$ by identifying $x = (x,1)$ and $w = (w,b)$, allowing for a more compact representation 
\begin{align}\label{eq:infinitewidth2}
    F_{\mu}(x) = \int_{\Theta} \sigma(\langle w,x\rangle) \, d\mu(w).
\end{align}
In this work, we consider positively $1$-homogeneous activation functions $\sigma:\R\rightarrow \R$ and, in particular, we often restrict our attention to the ReLU activation function defined as $\sigma(z) = \max\{0,z\}$ for $z \in \R$. Due to such $1$-homogeneity, since $\sigma(\langle w,x\rangle) =  |w|\sigma(\langle w/|w|,x\rangle)$ and rescaling the measure $\mu$ one can, without loss of expressivity for $F_\mu$, consider measures taking values on the sphere, that is $\Theta = \mathbb{S}^{d-1}$.  Such a choice is also convenient, since the compactness of the parameter space allows for well-posed variational problems in the space of measures. We will make the choice $\Theta= \mathbb{S}^{d-1}$ throughout this paper.

According to the previous considerations, given a collection of input/output pairs $(x^j,y^j)_{j=1}^n$ for $n\in \mathbb{N}$, a typical training problem taking values in the space of measures $M(\mathbb{S}^{d-1})$ can be defined as 
\begin{align}\label{eq:reg_intro}
    \min_{\mu \in M(\mathbb{S}^{d-1})} \frac{1}{2}\sum_{j=1}^n |F_{\mu}(x^j) - y^j|^2 + \lambda \|\mu\|_{\mathrm{TV}}, 
\end{align}
where $\|\mu\|_{\mathrm{TV}}$ is
the total variation norm, defined as 
\begin{align}\label{eq:tv}
\|\mu\|_{\mathrm{TV}} = \sup \left\{ \int \varphi(x)\,d\mu(x) : \varphi \in C(\mathbb{S}^{d-1})\text{ such that } \|\varphi\|_{\infty} \leq 1\right\}.
\end{align}
In this case, the total variation norm acts as a regularizer (whose intensity is regulated by $\lambda$). In particular, it has the  
goal of enforcing sparsity in the solution. This means selecting solutions that are made of a linear combination of Dirac deltas, thus recovering a classical neural network (with a finite number of neurons).

In the regime $\lambda \ll 1$, problem \eqref{eq:reg_intro} approaches an interpolation problem, in which the data fidelity term effectively acts as a hard constraint. Formally, one recovers the constrained formulation 
\begin{align}\label{eq:reg_hard_intro}
    \min_{\mu \in M(\mathbb{S}^{d-1})} \|\mu\|_{\mathrm{TV}} \quad\quad  \text{subjected to } \quad F_{\mu}(x^j) = y^j \ \ \ \forall j.
\end{align}
We also remark that total variation norm regularization allows to define a natural representation costs for infinite-width shallow neural networks 
opening the way to the analysis of approximation capabilities of shallow neural networks.
Indeed, for $f \in L^1(\mathbb{S}^{d-1})$ one can define   
\begin{align}
    \|f\|_{\mathcal{B}(\mathbb{S}^{d-1})} = \inf\left\{\|\mu\|_{\mathrm{TV}} : \mu \in M(\mathbb{S}^{d-1}),   F_\mu(x) = f(x) \text{ for } a.e. \ x\in \mathbb{S}^{d-1} \right\},
\end{align}
that can be seen as a representation cost in infinite-width regimes. 
In analogy with foundational works by Barron \cite{barron1994approximation, barron2002universal},
the quantity $\|f\|_{\mathcal{B}(\mathbb{S}^{d-1})}$ is also named Barron norm and 
Barron functions are those functions $f \in L^1(\mathbb{S}^{d-1})$ such that $\|f\|_{\mathcal{B}(\mathbb{S}^{d-1})} < \infty$ (see for instance \cite{ma2022barron}). 
Many important works have carried out different aspects of such analysis related either to the study of the representation cost or the approximation power of Barron functions \cite{dummer2025vector, ma2022barron, weinan2022representation, weinan2022some, heeringa2024embeddings, ongie2019function, savarese2019infinite, shevchenko2022mean}.

\subsection{Sparse guarantees in infinite-width limit of neural networks: state of the art and super-resolution theory}\label{sec:state-of-art}

It is a natural question to ask under which assumptions solutions of problem \eqref{eq:reg_intro} and \eqref{eq:reg_hard_intro} are sparse, namely their minimizers are made of linear combinations of Dirac deltas of the form
\begin{align}\label{eq:diracs_intro}
    \mu = \sum_{i=1}^N c^i\delta_{w^i}.
\end{align}
As highlighted in \cite{bartolucci2023understanding} in the context of Reproducing Kernel Banach Spaces (RKBS), a first, partial answer is provided by so-called representer theorems \cite{boyer, bredies2020sparsity}. In particular, when choosing the regularizer $R(\mu)=\|\mu\|_{TV}$ and assuming that the weight space $\Theta$ is compact, general representer theorems for infinite-dimensional convex optimization problems guarantee the existence of at least one solution of the form \eqref{eq:diracs_intro}, with the number of atoms bounded by $N\leq n$. Such sparsity results have been established in various training frameworks \cite{bartolucci2024lipschitz, bartolucci2023understanding, parhi2021banach, parhi2022kinds}. 
However, empirical evidence suggests that sparsity is not merely an existence property: in practice, all minimizers often appear to be sparse even when multiple solutions exist. This naturally raises the question of whether all solutions are sparse.
While this question remains largely unexplored in the machine learning literature with few exceptions \cite{dedios2020sparsity, najaf2025sampling}, it has been studied in depth in the context of super-resolution, where similar problems are formulated as convex optimization problems over spaces of measures. In this context, the property of having unique and sparse solutions is typically referred to as \emph{exact reconstruction}. Foundational results \cite{CandSR, de2012exact,schiebinger2018superresolution} identify precise conditions under which exact reconstruction holds for TV-regularized interpolation problems with Fourier constraints. %Foundational results \cite{CandSR,de2012exact,schiebinger2018superresolution} provide precise conditions for exact reconstruction in TV-regularized interpolation problems with Fourier constraints.
These results typically rely on duality arguments, where the key tool is the construction of dual certificates $\eta$, i.e. dual variables that certify optimality and sparsity of the primal solution by satisfying the associated first order optimality conditions \cite{fuchs2004sparse}. 

Beyond exact recovery, a substantial body of work has focused on sparse stability, namely the robustness of sparsity under perturbations of the problem. In this setting, one seeks conditions ensuring that the sparsity  of the solution persists when the regularization parameter 
$\lambda$ varies, and the data and labels are contaminated by noise.
A series of influential works \cite{denoyelle2017support, duval2020characterization, Peyre,  offthegridPoon}  has shown that sparse stability is governed by non-degeneracy properties of the dual certificate associated with the optimization problem. More precisely, stability holds when the dual certificate saturates its extreme values on the support of the underlying sparse measure and exhibits a strict second order behavior there, namely local strong concavity (or convexity, depending on the sign convention). These conditions prevent the creation or annihilation of atoms under small perturbations. %Instead, with ReLU features the dual certificate is piecewise linear in the parameter space, allowing a region-by-region geometric analysis, and leading to stability results that do not require second-order non-degeneracy.

\subsection{Contributions of the paper}

In this paper, we study sparsity properties of both problems \eqref{eq:reg_intro} and \eqref{eq:reg_hard_intro} in the case of the ReLU activation function. Our analysis follows a duality-based approach, drawing on classical tools from optimization in the space of measures and super-resolution techniques discussed in the previous part of the introduction. Duality methods play a central role in infinite-dimensional optimization and in the analysis of sparse representations. Nevertheless, their systematic use to establish sparsity guarantees for infinite-width neural networks remains largely unexplored. We note, however, that duality techniques have appeared in the context of Barron spaces \cite{spek2025duality} and in certain finite-dimensional settings \cite{kim2024exploring}.

We begin our analysis by proving that every solution of both problems is sparse, and that the number of Dirac deltas in any minimizer is uniformly bounded from above by a quantity depending only on the ambient dimension $d$ and on the number of data points $n$.
The key observation underlying our results is that the dual certificate associated with the problems can be written in the explicit form
 \begin{align}\label{intro:sparse}
        \eta(w)= \sum_{i=1}^n p^i \sigma (\langle w, x^i \rangle), \quad p^i \in \mathbb{R}, w\in \mathbb{S}^{d-1}. 
 \end{align}
When $\sigma$ is the ReLU activation function, $\eta$ coincides with the restriction to the sphere $\mathbb{S}^{d-1}$ of a piecewise linear function. More precisely, its linearity regions are determined by the intersection of the half-spaces $\{w: \langle w, x^i \rangle >0\}$ and $\{w: \langle w, x^i \rangle <0\}$, referred to as \emph{dual regions}, and $\eta$ changes slope precisely across their common boundaries $\left\{w: \langle w, x^j\rangle=0\right\}$.

Due to the geometry of the sphere, this observation allows us to conclude that in the interior of each dual region, $\eta$ achieves maximum and minimum in at most one point. Moreover, at the common boundary of the regions (determined by the vanishing of the scalar products $\langle w, x^i\rangle$), a similar analysis can also be performed.  Since, by optimality conditions, the extreme values of the dual certificates determine the support of the minimizers, one can then infer the sparsity of the minimizers. Additionally, by combinatorial arguments involving the counting of non-empty dual regions, one can also obtain an upper bound on the number of Dirac deltas constituting the minimizers. 

It is important to emphasize that the sparsity of minimizers alone does not imply uniqueness. In Section~\ref{uniq}, we introduce sufficient conditions guaranteeing the uniqueness of the minimizer. These conditions rely on two key properties: first, given a sparse measure of the form \eqref{eq:diracs_intro}, the associated dual certificate for problems~\eqref{eq:reg_intro} and~\eqref{eq:reg_hard_intro} attains the extreme values $|\eta(w)| = 1$ exactly at the support points $w^i$; second, the corresponding neural network evaluations at the data points, namely ${\int_{\mathbb{S}^{d-1}} \sigma(\langle w, x^j\rangle)\mathrm{d}\mu(w)}$, are linearly independent. In this section we provide sufficient conditions that guarantee such linear independence.

Finally, Section~\ref{sec:stability} is devoted to the analysis of sparse stability for problem~\eqref{eq:reg_hard_intro}. More precisely, we show that, under suitable assumptions on the dual certificate and for sufficiently small regularization parameters and noise levels in the labels, the sparsity of the solution is preserved, with the same number of Dirac deltas. Moreover, the weights $c^i$ and $w^i$ of the minimizer of the perturbed problem converge to the weights of the unperturbed one.
As discussed in Section~\ref{sec:state-of-art}, a key ingredient for sparse stability is the non-degeneracy of the dual certificate associated with problem~\eqref{eq:reg_hard_intro}. While this remains true in our setting, the piecewise linear structure of the dual certificate leads to non-degeneracy conditions with an explicit characterization. In particular, when a Dirac delta lies in the interior of a dual region, no additional non-degeneracy condition is required. In contrast, when a Dirac delta is located at the boundary of a region where the dual certificate is non-differentiable, the non-degeneracy is characterized by sign conditions on the (distributional) derivative of the dual certificate.

Finally, in the case where the support points $w^i$ lie in the interior of a dual region, we establish quantitative convergence rates for the weights of the minimizer of the perturbed problem. More precisely, under suitable linear independence assumptions on both the neural network evaluations at the data points and their derivatives, we prove that

\begin{equation}
    \begin{aligned}
\left|c_{\lambda,\zeta}^i-c_0^i\right|&=O(\lambda), \\
\left\|w_{\lambda,\zeta}^i-w_0^i\right\|&=O(\lambda),
    \end{aligned}
\end{equation}

as $\lambda \to 0$ and the noise $\zeta \to 0$ with $\|\zeta\| \leq \alpha \lambda$.

\section{Notations and preliminaries}

\subsection{TV-regularized optimization in the space of measures}\label{sec:tvreg}
In this section, we review tools of optimization in the space of measures we are going to use in the rest of the paper. We denote by $M(X)$ the Banach space of finite signed Radon measures on a compact set $X \subset \mathbb{R}^d$, endowed with the total variation norm.
For any Radon measure $\mu$ defined on $X$, we  also denote its support by $\operatorname{Supp}(\mu)$.
We will consider TV-regularized optimization problems in the space of measures (commonly known as BLASSO \cite{azais2015spike}) that will be written as 
\begin{align}\label{eq:softproblem}
    \inf_{\mu \in M(X)} \frac{1}{2}\|K\mu - y_\zeta\|_Y^2 + \lambda \|\mu\|_{\rm TV},  
    \tag{$\mathcal{P}_\lambda(y_\zeta)$}
\end{align}
where $Y$ is a separable Hilbert space, $y_\zeta\in Y$, $K: M(X) \rightarrow Y$ is a weak*-to-strong continuous linear operator and $\lambda$ is a positive parameter. Such problems are popular as inverse problems in the space of measures with applications to signal processing \cite{CandSR}, super-resolution in microscopy \cite{Denoyelle_2019}, sensor placement \cite{neitzelsparse}, and many others. Often, the measurement $y_\zeta$ is a noisy perturbation of a ground truth signal $y_0$, that is $y_\zeta = y_0  +\zeta$. The goal is to reconstruct the optimal $\mu$ determining its sparsity properties. 
In the formal limit $\left(\|\zeta\|_Y, \lambda\right) \rightarrow(0,0)$, solutions to  \ref{eq:softproblem} converge to solutions to the
\emph{hard-constrained} interpolation problem:
\begin{align}\label{eq:hard_prob}
    \inf _{\mu \in M(X): K \mu=y_0} \|\mu\|_{\mathrm{TV}}. \tag{$\mathcal{P}_0(y_0)$}
\end{align}
Existence of minimizers is guaranteed for all the problems (see for instance \cite[Section $3$]{DelGrande}). Moreover, this convergence can be made rigorous as in the following classical result \cite[Theorem 3.5]{Hofmann}.
\begin{proposition}\label{prop:hofmann}
 Suppose that there exists $\mu \in M(X)$ such that $K\mu = y_0$. Consider sequences $\zeta_n\in Y$ and $\lambda_n>0$ such that  
 \begin{align}
     \|\zeta_n\|_Y \to 0,\qquad \lambda_n\to 0,\qquad \frac{\|\zeta_n\|_Y^2}{\lambda_n}\to 0 \quad \text{as } n\to\infty.
 \end{align}
Then, any sequence of minimizers of 
\begin{align}
    \inf_{\mu \in M(X)} \frac{1}{2}\|K\mu - {y_{\zeta_n}}\|_Y^2 + \lambda_n\|\mu\|_{\rm TV}  
\end{align}
has a weak* converging subsequence and each limit is a solution to \ref{eq:hard_prob}. In particular, if \ref{eq:hard_prob} has a unique minimizer, then the whole sequence converges weak* to it. 
\end{proposition}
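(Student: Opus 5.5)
We argue by comparison with a fixed feasible measure. First, problem \ref{eq:hard_prob} has a minimizer: feasible measures exist by assumption, the TV norm is weak* lower semicontinuous, bounded sets in $M(X)$ are weak* sequentially compact (since $X$ is compact and $C(X)$ is separable), and the constraint set $\{K\mu = y_0\}$ is weak* closed because $K$ is weak*-to-strong continuous. Fix such a minimizer $\mu^\dagger$. Let $\mu_n$ denote any minimizer of the perturbed problem, with $y_{\zeta_n} = y_0+\zeta_n$. Testing its optimality against $\mu^\dagger$, for which $K\mu^\dagger - y_{\zeta_n} = -\zeta_n$, gives the basic inequality
\begin{align}
\frac12\|K\mu_n - y_{\zeta_n}\|_Y^2 + \lambda_n\|\mu_n\|_{\mathrm{TV}} \leq \frac12\|\zeta_n\|_Y^2 + \lambda_n\|\mu^\dagger\|_{\mathrm{TV}}.
\end{align}

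From this inequality we extract two bounds. Dropping the fidelity term and dividing by $\lambda_n$ gives
\begin{align}
\|\mu_n\|_{\mathrm{TV}} \leq \frac{\|\zeta_n\|_Y^2}{2\lambda_n} + \|\mu^\dagger\|_{\mathrm{TV}}.
\end{align}
Since $\|\zeta_n\|_Y^2/\lambda_n\to 0$, this yields $\limsup_n\|\mu_n\|_{\mathrm{TV}}\leq \|\mu^\dagger\|_{\mathrm{TV}}$, and in particular the sequence is uniformly bounded. Dropping instead the TV term gives
\begin{align}
\|K\mu_n - y_{\zeta_n}\|_Y^2 \leq \|\zeta_n\|_Y^2 + 2\lambda_n\|\mu^\dagger\|_{\mathrm{TV}} \to 0.
\end{align}
Combined with $\zeta_n\to 0$ and the triangle inequality, this shows $K\mu_n\to y_0$ strongly in $Y$.

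Boundedness now gives, by Banach–Alaoglu in its sequential form on the separable predual $C(X)$, a weak* convergent subsequence $\mu_{n_k}\rightharpoonup^*\bar\mu$. We check that every such limit solves \ref{eq:hard_prob}:
\begin{itemize}
\item Since $K$ is weak*-to-strong continuous, $K\mu_{n_k}\to K\bar\mu$, so $K\bar\mu = y_0$ and $\bar\mu$ is feasible.
\item By weak* lower semicontinuity of the TV norm, $\|\bar\mu\|_{\mathrm{TV}}\leq\liminf_k\|\mu_{n_k}\|_{\mathrm{TV}}\leq\|\mu^\dagger\|_{\mathrm{TV}}$.
\item Since $\mu^\dagger$ attains the infimum, $\bar\mu$ is a minimizer.
\end{itemize}

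If \ref{eq:hard_prob} has a unique minimizer $\mu^\dagger$, we use the standard subsequence argument. Suppose the whole sequence does not converge weak* to $\mu^\dagger$. Then, because the weak* topology on bounded sets is metrizable, some subsequence stays outside a weak* neighbourhood of $\mu^\dagger$. That subsequence is still bounded, so it has a further weak* convergent subsequence, whose limit must be a minimizer and hence equal to $\mu^\dagger$. This is a contradiction.

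The main delicate step is the uniform TV bound. This is exactly where the hypothesis $\|\zeta_n\|_Y^2/\lambda_n\to0$ enters; without it, the noise term could blow up the TV norms and the extracted limit would fail to minimize.
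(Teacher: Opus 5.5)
Your proof is correct and is essentially the standard comparison argument by which the cited result (Theorem 3.5 of Hofmann et al.) is proved: a basic inequality against a fixed minimizer $\mu^\dagger$, a uniform TV bound from $\|\zeta_n\|_Y^2/\lambda_n\to 0$, weak* compactness, weak*-to-strong continuity of $K$ for feasibility, and lower semicontinuity for minimality. The paper gives no proof of its own and simply invokes that theorem; as a small bonus, your chain $\|\bar\mu\|_{\mathrm{TV}}\leq\liminf_k\|\mu_{n_k}\|_{\mathrm{TV}}\leq\limsup_k\|\mu_{n_k}\|_{\mathrm{TV}}\leq\|\mu^\dagger\|_{\mathrm{TV}}=\|\bar\mu\|_{\mathrm{TV}}$ also yields convergence of the TV norms along the subsequence, which is part of the cited theorem.
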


%Note that the sublevel sets of $ \|\mu\|_{\mathrm{TV}}$ are weak* compact by Banach-Alaoglu theorem, and in particular the ball $B=\left\{\mu \in M(X):\|\mu\|_{\mathrm{TV}} \leqslant 1\right\}$ is weak* compact, non-empty, and convex. It is also well known that the extreme points of $B$ are exactly Dirac deltas (see for example \cite[Proposition 4.1]{bc}), that is
%$$
%\operatorname{Ext}(B)=\left\{\sigma \delta_x: x \in X, \sigma \in\{-1,1\}\right\}.
%$$
%Moreover, such set is weak* closed. 

\subsubsection{Duality theory and optimality conditions}

A crucial source of information for minimizers of \ref{eq:softproblem} and \ref{eq:hard_prob} is given by respective dual problems and optimality conditions. 
The dual problem associated with \ref{eq:softproblem} (cf. \cite{Peyre,Temam}) is
\begin{equation}\label{eq:dualsoftproblem}
    \max _{p\in Y: \left\|K_* p\right\|_{\infty} \leqslant 1}\langle y_\zeta, p\rangle-\frac{\lambda}{2}\|p\|_Y^2 \tag{$\mathcal{D}_{\lambda}(y_\zeta)$}, 
\end{equation}
while the dual problem associated with \ref{eq:hard_prob} is
\begin{equation}\label{eq:dualhardproblem}
    \sup _{p\in Y: \left\|K_* p\right\|_{\infty} \leqslant 1}\left\langle y_0, p\right\rangle. \tag{$\mathcal{D}_0(y_0)$}
\end{equation}
Here $K_*:Y\to C(X)$ denotes the adjoint operator defined by
$\langle K\mu,p\rangle_Y=\int_X (K_*p)(x)\,d\mu(x)$ for all $\mu\in M(X)$ and $p\in Y$,
and $\|\cdot\|_\infty$ is the supremum norm on $C(X)$.

\emph{Strong duality} holds between \ref{eq:softproblem} and \refeq{eq:dualsoftproblem} and the existence of $\mu_{\lambda,\zeta} \in M(X)$ solution to \ref{eq:softproblem} and $p_{\lambda,\zeta} \in Y$ solution to \ref{eq:dualsoftproblem}, is equivalent to the following optimality conditions: 
\begin{equation}\label{oc_soft}
    \left\{\begin{aligned}
K_* p_{\lambda,\zeta} & \in \partial \|\mu_{\lambda,\zeta}\|_{\mathrm{TV}}, \\
-p_{\lambda,\zeta} & =\frac{1}{\lambda}\left(K \mu_{\lambda,\zeta}-y_{\zeta}\right) .
\end{aligned}\right.
\end{equation}
Similarly, strong duality holds between \ref{eq:hard_prob} and \refeq{eq:dualhardproblem} and the existence of $\mu_0 \in M(X)$ solution to \ref{eq:hard_prob} and $p_0 \in Y$ solution to \ref{eq:dualhardproblem}, is equivalent to the following optimality conditions:
\begin{equation}\label{oc_hard}
    \left\{\begin{aligned}
K_* p_0 & \in \partial \|\mu_0\|_{\mathrm{TV}}, \\
K\mu_0 & =y_0.
\end{aligned}\right.
\end{equation}
If $\eta_{\lambda,\zeta}=K_* p_{\lambda,\zeta}$ and $\eta_{\lambda,\zeta} \in \partial \|\mu_{\lambda,\zeta}\|_{\mathrm{TV}}$, we call $\eta_{\lambda,\zeta}$ a \emph{dual certificate} for $\mu_{\lambda,\zeta}$ with respect to \ref{eq:softproblem}, since the optimality conditions \eqref{oc_soft} guarantee that $\mu_{\lambda,\zeta}$ is a solution to \ref{eq:softproblem}. Similarly, if $\eta_0=K_* p_0$ and $\eta_0 \in \partial \|\mu_{0}\|_{\mathrm{TV}}$, we call $\eta_0$ a \emph{dual certificate} for $\mu_0$ with respect to \ref{eq:hard_prob}, since the optimality conditions \eqref{oc_hard} guarantee that $\mu_{0}$ is a solution to \ref{eq:hard_prob}.

The solution $p_{\lambda,\zeta}$ to \ref{eq:dualsoftproblem} is unique, since this problem can be reformulated as the Hilbert projection of $y_{\zeta} / \lambda$ onto the closed convex set $\left\{p \in Y: \|K_* p\|_{\infty}\leqslant 1\right\}$, c.f. \cite[Section 2.3]{Peyre}. On the other hand, the solution to \ref{eq:dualhardproblem} is not unique. Among the dual certificates, an important role is played by the minimal-norm dual certificate defined as follows.

\begin{definition}[Minimal-norm dual certificate]\label{def:min_norm_dc}
    Suppose that there exists a solution to \ref{eq:dualhardproblem}. Then the minimal-norm dual certificate  is defined as $\eta_0=K_* p_0$, where 
\begin{equation}
p_0=\operatorname{argmin}\left\{\|p\|_Y: p \in Y \text{ is a solution to } \mathcal{D}_0(y_0)\right\}.
\end{equation}
\end{definition}
Since the subdifferential of the total variation norm can be characterized as 
\begin{align}
   \partial \|\mu\|_{\mathrm{TV}} = \left\{\varphi \in C(X): \|\varphi\|_\infty \leq 1 \text{ and } \int_X \varphi(x)\, d\mu = \|\mu\|_{\mathrm{TV}}\right\}, 
\end{align}

one can deduce the following well-known result that relates the extreme values of the dual certificate with the support of the optimal measure. 

\begin{Lemma}\label{lemma:support}
    Let $\eta_0 \in C(X)$  be a dual certificate for $\mu_0$. Then, $\mu_0$ is a solution to \ref{eq:hard_prob} and 
    \begin{align}
        \supp  \subset \{x \in X: |\eta_0(x)| = 1\}.
    \end{align}
    Similarly, if $\eta_{\lambda,\zeta} \in C(X)$  is the dual certificate for $\mu_{\lambda,\zeta}$. Then,  $\mu_{\lambda,\zeta}$ is a solution to \ref{eq:softproblem} and 
    \begin{align}
        {\rm Supp}(\mu_{\lambda,\zeta}) \subset \{x \in X: |\eta_{\lambda,\zeta}(x)| = 1\}.
    \end{align}
\end{Lemma}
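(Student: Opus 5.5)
The proof has two parts: optimality of $\mu_0$ (resp. $\mu_{\lambda,\zeta}$), and the support inclusion. Both follow from the characterization of $\partial\|\cdot\|_{\mathrm{TV}}$ recalled above.

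\emph{Optimality in the hard-constrained case.} We read the statement, as the surrounding text does, as saying that $(\mu_0,p_0)$ satisfies the optimality conditions \eqref{oc_hard}, with $\eta_0=K_*p_0$. Take any admissible $\mu$, i.e.\ $K\mu=y_0$. Since $\|\eta_0\|_\infty\le 1$,
\begin{align}
\|\mu\|_{\mathrm{TV}}\geq \int_X \eta_0\,d\mu=\langle K\mu,p_0\rangle_Y=\langle y_0,p_0\rangle_Y=\langle K\mu_0,p_0\rangle_Y=\int_X\eta_0\,d\mu_0=\|\mu_0\|_{\mathrm{TV}}.
\end{align}
Hence $\mu_0$ solves \ref{eq:hard_prob}.

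\emph{Optimality in the soft case.} Here we use convexity of $F(\mu)=\frac12\|K\mu-y_\zeta\|^2+\lambda\|\mu\|_{\mathrm{TV}}$. The second condition in \eqref{oc_soft} gives $K_*(K\mu_{\lambda,\zeta}-y_\zeta)=-\lambda\eta_{\lambda,\zeta}$. For any $\mu$, expanding the square gives
\begin{align}
\tfrac12\|K\mu-y_\zeta\|^2\geq \tfrac12\|K\mu_{\lambda,\zeta}-y_\zeta\|^2+\langle K\mu_{\lambda,\zeta}-y_\zeta,K(\mu-\mu_{\lambda,\zeta})\rangle .
\end{align}
The cross term equals $-\lambda\int\eta_{\lambda,\zeta}\,d(\mu-\mu_{\lambda,\zeta})$. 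Combining this with $\|\mu\|_{\mathrm{TV}}\ge\int\eta_{\lambda,\zeta}\,d\mu$ and $\|\mu_{\lambda,\zeta}\|_{\mathrm{TV}}=\int\eta_{\lambda,\zeta}\,d\mu_{\lambda,\zeta}$ yields $F(\mu)\ge F(\mu_{\lambda,\zeta})$.

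\emph{Support inclusion.} Fix $\eta\in\partial\|\mu\|_{\mathrm{TV}}$ and take the Hahn–Jordan (polar) decomposition $d\mu=h\,d|\mu|$ with $|h|=1$ $|\mu|$-a.e. Then
\begin{align}
0=\|\mu\|_{\mathrm{TV}}-\int\eta\,d\mu=\int_X(1-\eta h)\,d|\mu|.
\end{align}
The integrand is nonnegative because $|\eta|\le1$ and $|h|=1$, so $\eta h=1$ $|\mu|$-a.e. Consequently $|\eta|=1$ $|\mu|$-a.e., that is, $|\mu|(\{|\eta|<1\})=0$.

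The set $\{|\eta|<1\}$ is open because $\eta$ is continuous. An open set of zero $|\mu|$-measure does not meet the support, since the support is the complement of the largest open $|\mu|$-null set. Therefore $\operatorname{Supp}(\mu)\subset\{|\eta|=1\}$, a set which is closed. Applying this with $\eta=\eta_0$ and with $\eta=\eta_{\lambda,\zeta}$ gives both inclusions.

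The only step needing any care is this last topological one, going from "a.e." to a statement about the support, and continuity of $\eta$ makes it immediate. The rest is routine.
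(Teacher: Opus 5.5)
Your proof is correct and is essentially the argument the paper intends: the paper gives no written proof. It only points to the subdifferential characterization of $\|\cdot\|_{\mathrm{TV}}$, which you turn into the support inclusion via the polar decomposition and the openness of $\{|\eta|<1\}$, and to the optimality conditions \eqref{oc_hard}--\eqref{oc_soft}. The one small difference is that you check sufficiency of those conditions by a direct weak-duality computation rather than citing the strong-duality equivalence, which makes the argument self-contained. Your reading that $K\mu_0=y_0$ (resp.\ the second line of \eqref{oc_soft}) is part of the hypothesis is the right one, since $\eta\in\partial\|\mu\|_{\mathrm{TV}}$ alone does not give optimality (e.g.\ $\mu=0$).
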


\section{Dual regions and sparsity properties for infinite-width neural networks}

The goal of this section is to construct and analyze dual certificates for infinite-width neural networks with ReLU activation, and to infer sparsity properties of the minimizers. From now on, we will identify $Y=\R^n$ and consider data points $x^j\in\mathbb R^{d+1}$, $j=1,\ldots,n$. We set $y_0:=(y_0^1,\ldots,y_0^n)$ and $y_\zeta:=(y_\zeta^1,\ldots,y_\zeta^n)$. We will consider both the interpolation problem without noise
\begin{align}\label{eq:interpolationproblem}
    \inf_{\mu \in M(\mathbb{S}^{d-1})} \|\mu\|_{\mathrm{TV}} \quad \text{subjected to} \quad F_\mu(x^j) = y_0^j \quad \forall j
\tag{$\mathscr{P}_0(y_0)$}
\end{align}
and the TV-regularized empirical loss where the labels are perturbed as $y_\zeta^j = y_0^j + \zeta^j$, with $\zeta=(\zeta^1,\ldots,\zeta^n)\in\mathbb{R}^n$,
\begin{align}\label{eq:training}
    \inf_{\mu \in M(\mathbb{S}^{d-1})} \frac{1}{2}\sum_{j=1}^n |F_\mu(x^j) - y_\zeta^j|^2 + \lambda  \|\mu\|_{\mathrm{TV}}.
\tag{$\mathscr{P}_{\lambda}(y_{\zeta})$}
\end{align}
We recall that $\|\cdot\|_{\mathrm{TV}}$ denotes the total variation norm, defined in \eqref{eq:tv}. 
%We consider the space $ M(\mathbb{S}^{d-1})$ of Radon measures 
%and $Y := \RR^n$, where $\mathbb{S}^{d-1}$ is the unit $d-$sphere and we note that
%that can be parametrized in polar coordinates by
%\begin{equation}\label{ps}
  % \begin{array}{r}
%w_1=\cos \theta, \\
%w_2=\sin \theta, \\
%0 \leqslant \theta \leqslant 2 \pi.
%\end{array}
%\end{equation} 
%\begin{itemize}
%\item $ M(\mathbb{S}^{d-1})$ endowed with the TV-norm
 %\begin{align*}
   %  \|u\|_{\mathcal{M}(\mathbb T)}= \|\mu\|_{M(\mathbb T)}+ \|\nu\|_{M(\mathbb T)}
% \end{align*}
% is a Banach space whose pre-dual is $C(\mathbb{S}^{d-1})$, the space of continuous functions, that is $M(\mathbb{S}^{d-1}) \simeq C^*(\mathbb{S}^{d-1})$;
%  \item The regularizer $R:M(\mathbb{S}^{d-1})\rightarrow [0,+\infty]$  is the total variation norm defined as
%$$
%\|\mu\|_{\mathrm{TV}}=\sup \left\{\int_{\mathbb{S}^{d-1}} \phi(x) \mathrm{d} \mu(x): \phi \in C(\mathbb{S}^{d-1}),\|\phi\|_{\infty} \leqslant 1\right\},
%$$
%which is a convex, weak* lower semi-continuous and positively 1-homogeneous functional. We also recall that, since the total variation is a sublinear function, its subdifferential becomes
%\begin{equation}\label{eq:subTV}
%\partial\|\mu\|_{\mathrm{TV}}=\left\{\psi \in C(\mathbb{S}^{d-1}) :\|\psi\|_{\infty} \leqslant 1 \text { and } \int_{\mathbb{S}^{d-1}} \psi \mathrm{d} \mu=\|\mu\|_{\mathrm{TV}}\right\}.
%\end{equation}
%\end{itemize} 
Note that by defining the linear operator $K: M(\mathbb{S}^{d-1}) \rightarrow \RR^n$ as
\begin{align}
(K \mu)_j:= F_{\mu}(x^j)= \int_{\mathbb{S}^{d-1}} \sigma(\langle w, x^j\rangle) \mathrm{d} \mu(w) \quad j=1,\ldots, n,
\end{align}
where $\sigma(z) = {\rm ReLU}(z)$, we obtain that \ref{eq:interpolationproblem} has the form of a general hard-constrained TV-regularized problem \ref{eq:hard_prob} 
\begin{align}
   \inf_{\mu \in M(\mathbb{S}^{d-1}): K\mu = y_0} \|\mu\|_{\mathrm{TV}} 
\end{align} 
and \refeq{eq:training}  has the form of a general Tikhonov-type regularized problem \ref{eq:softproblem}
\begin{align}
   \inf_{\mu \in M(\mathbb{S}^{d-1})} \frac{1}{2}\|K\mu - y_\zeta\|_{\R^n}^2 + \lambda  \|\mu\|_{\mathrm{TV}}, 
\end{align} 
 allowing us to use the dual certificate theory developed in Section \ref{sec:tvreg}, provided $K$ is weak*-to-strong continuous. This is verified in the next easy lemma.

\begin{Lemma}\label{lem:predual}
    The operator $K: M(\mathbb{S}^{d-1}) \rightarrow \mathbb{R}^n$ is weak*-to-strong continuous. Moreover, its adjoint $K_* : \mathbb{R}^n \rightarrow C(\mathbb{S}^{d-1})$ is given by 
    \begin{align}\label{eq:predual}
        K_*p(w) = \sum_{j=1}^n p^j \sigma(\langle w, x^j\rangle)\quad \forall p\in \RR^n.
    \end{align}
\end{Lemma}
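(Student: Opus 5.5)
The plan is to treat each component $K_j\mu = \int_{\mathbb{S}^{d-1}} \sigma(\langle w, x^j\rangle)\,\mathrm{d}\mu(w)$ separately, since $\mathbb{R}^n$ is finite-dimensional and strong convergence there is just componentwise convergence. The key observation is that for each fixed $j$, the function $\varphi_j(w) := \sigma(\langle w, x^j\rangle)$ is continuous on $\mathbb{S}^{d-1}$. Indeed, $w\mapsto \langle w,x^j\rangle$ is linear, hence continuous, and $\sigma=\max\{0,\cdot\}$ is $1$-Lipschitz. In fact $\varphi_j$ is Lipschitz with constant $|x^j|$. Thus $\varphi_j \in C(\mathbb{S}^{d-1})$.

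Since $\mathbb{S}^{d-1}$ is compact, the Riesz representation theorem identifies $M(\mathbb{S}^{d-1})$ with $C(\mathbb{S}^{d-1})^*$. The weak* topology is then exactly convergence of $\int \varphi\,\mathrm{d}\mu$ for every $\varphi\in C(\mathbb{S}^{d-1})$.

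Weak*-to-strong continuity follows directly. If $\mu_k \rightharpoonup^* \mu$ (nets suffice, but sequences are enough for the use in Section~\ref{sec:tvreg}), then $(K\mu_k)_j = \int \varphi_j\,\mathrm{d}\mu_k \to \int \varphi_j\,\mathrm{d}\mu = (K\mu)_j$ for each $j=1,\dots,n$. Hence $K\mu_k\to K\mu$ in $\mathbb{R}^n$ with respect to any norm. Equivalently, $K$ can be written as $\mu\mapsto(\langle \varphi_1,\mu\rangle,\dots,\langle\varphi_n,\mu\rangle)$, and each coordinate is a weak*-continuous functional because it is given by pairing against an element of the predual.

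For the adjoint, I would compute, for $p\in\mathbb{R}^n$ and $\mu\in M(\mathbb{S}^{d-1})$,
\begin{align}
\langle K\mu, p\rangle_{\mathbb{R}^n} = \sum_{j=1}^n p^j \int_{\mathbb{S}^{d-1}} \sigma(\langle w,x^j\rangle)\,\mathrm{d}\mu(w) = \int_{\mathbb{S}^{d-1}} \Big(\sum_{j=1}^n p^j\sigma(\langle w,x^j\rangle)\Big)\mathrm{d}\mu(w).
\end{align}
Exchanging the sum and the integral is justified by linearity of the integral, since the sum is finite. The integrand is a finite linear combination of the continuous functions $\varphi_j$, so it lies in $C(\mathbb{S}^{d-1})$. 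By the defining relation of $K_*$ in Section~\ref{sec:tvreg}, this gives \eqref{eq:predual}.

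Uniqueness of this representation is immediate: two continuous functions that integrate identically against every measure, in particular against every Dirac delta $\delta_w$, must coincide pointwise.

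There is no real obstacle in this argument. The only point requiring care is the continuity of $\varphi_j$ on the sphere, since this is what places it in the predual $C(\mathbb{S}^{d-1})$ rather than merely in $L^\infty$, and it follows from the Lipschitz bound above.
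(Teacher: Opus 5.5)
Your proposal is correct and follows essentially the same route as the paper: weak*-to-strong continuity comes from each component being a pairing against the continuous function $\sigma(\langle \cdot, x^j\rangle)$ on the sphere, which the paper calls immediate and you justify via the Lipschitz bound. You identify $K_*$ by the same exchange of the finite sum with the integral, and your extra remark that the representing continuous function is unique (test against $\delta_w$) is a harmless addition that the paper leaves implicit.
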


\begin{proof}
    The weak* continuity of $K$ is immediate. In particular, since the codomain is finite-dimensional, $K$ is weak*-to-strong continuous.
 To prove the characterization of $K_*$ it is enough to note that, for all $\mu\in M(\mathbb{S}^{d-1})$, it holds that \begin{equation}\label{inverseK}
\begin{aligned}
     \int_{\mathbb{S}^{d-1}}K_*p\dm \mu(w)&=\langle K_*p,\mu\rangle=(p,K\mu)_{\RR^n}=\sum_{j=1}^n p^j(K\mu)_j\\
     &=\sum_{j=1}^n p^j \int_{\mathbb{S}^{d-1}} \sigma(\langle w, x^j\rangle) \mathrm{d} \mu(w)=\int_{\mathbb{S}^{d-1}} \sum_{j=1}^n p^j \sigma(\langle w, x^j\rangle) \mathrm{d} \mu(w)\\
     &=\langle\sum_{j=1}^n p^j \sigma(\langle w, x^j\rangle),\mu\rangle.
\end{aligned}
\end{equation}
\end{proof}
We now specialize, for the sake of clarity in the rest of the paper, the abstract dual problems from the previous section to the present ReLU setting.
In particular, using \eqref{eq:predual}, the dual of the interpolation problem \ref{eq:interpolationproblem} becomes
\begin{align}\label{eq:dual-interp}
\sup_{p\in\R^n} \langle p,y_0\rangle
\quad\text{subject to}\quad
\sup_{w\in\mathbb{S}^{d-1}}
\Bigl|\sum_{j=1}^n p^j \,\sigma(\langle w,x^j\rangle)\Bigr| \leq 1.
\tag{$\mathscr{D}_0(y_0)$}
\end{align}
Any maximizer $p_0$ of \refeq{eq:dual-interp} yields a dual certificate $\eta_0(w) := K_*p_0(w)
= \sum_{j=1}^n p_0^j \,\sigma(\langle w,x^j\rangle).$
Instead, the dual of \ref{eq:training} becomes
\begin{align}\label{eq:dual-training}
\sup_{p\in\R^n} \langle y_\zeta,p\rangle - \frac{\lambda}{2} \|p\|^2
\quad\text{subject to}\quad
\sup_{w\in\mathbb{S}^{d-1}}
\Bigl|\sum_{j=1}^n p^j \,\sigma(\langle w,x^j\rangle)\Bigr| \leq 1.
\tag{$\mathscr{D}_\lambda(y_\zeta)$}
\end{align}
Given the maximizer $p_{\lambda,\zeta}$ of \refeq{eq:dual-training}, we define the dual certificate
$\eta_{\lambda,\zeta}(w)
:= K_*p_{\lambda,\zeta}(w)
= \sum_{j=1}^n p_{\lambda,\zeta}^j \,\sigma(\langle w,x^j\rangle)$. Since $Y=\mathbb{R}^n$, the dual problem \ref{eq:dual-training} admits a (unique) maximizer for every $\lambda>0$. 
For \ref{eq:dual-interp}, maximizers may be not unique; we will consider the minimal-norm one when needed.
%Since in this case $Y$ is finite dimensional, a solution of the dual problems \ref{eq:dual-interp} and \ref{eq:dual-training} exist by standard compactness arguments.
Moreover, by the following standard result (see for example \cite[Proposition 1]{Peyre}), the dual certificates converge uniformly in the noiseless regime as the regularization parameter vanishes. 
\begin{proposition}[Convergence of the dual certificates]\label{prop:convdual}
     Let $p_{\lambda,0}$ be the unique solution to $\mathscr{D}_\lambda(y_0)$. Let $p_0$ be the solution with minimal-norm of \ref{eq:dual-interp}. Then,
\begin{align}
\lim _{\lambda \rightarrow 0^{+}}\|\eta_{\lambda,0}-\eta_0\|_{\infty}=0.
\end{align}
\end{proposition}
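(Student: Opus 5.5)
The plan is to exploit the fact that $\mathscr{D}_\lambda(y_0)$ is, up to rescaling, a Hilbert projection in $\R^n$. I will then reduce uniform convergence of the certificates to convergence of the dual vectors, using that $K_*$ is a bounded linear map from $\R^n$ to $C(\mathbb{S}^{d-1})$.

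Let $C:=\{p\in\R^n:\|K_*p\|_\infty\le 1\}$, which is closed and convex. Completing the square, $p_{\lambda,0}$ maximizes $\langle y_0,p\rangle-\frac{\lambda}{2}\|p\|^2$ over $C$, so $p_{\lambda,0}=P_C(y_0/\lambda)$. The existence of $p_0$ is part of the hypothesis: a maximizer of $\mathscr{D}_0(y_0)$ exists, so the set $S$ of maximizers is nonempty, closed and convex, and $p_0$ is its unique minimal-norm element.

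First I will obtain a uniform bound from optimality. Comparing the objective at $p_{\lambda,0}$ and at $p_0$ gives
\[
\langle y_0,p_{\lambda,0}\rangle-\tfrac{\lambda}{2}\|p_{\lambda,0}\|^2\ \ge\ \langle y_0,p_0\rangle-\tfrac{\lambda}{2}\|p_0\|^2 .
\]
Since $p_{\lambda,0}\in C$, we also have $\langle y_0,p_{\lambda,0}\rangle\le \langle y_0,p_0\rangle$. Combining the two yields $\|p_{\lambda,0}\|\le\|p_0\|$ and $0\le \langle y_0,p_0\rangle-\langle y_0,p_{\lambda,0}\rangle\le \frac{\lambda}{2}\|p_0\|^2\to 0$.

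Next I will identify the limit by compactness. By the bound, every sequence $\lambda_k\to0$ has a subsequence along which $p_{\lambda_k,0}\to\bar p$. Since $C$ is closed, $\bar p\in C$, and the value estimate gives $\langle y_0,\bar p\rangle=\langle y_0,p_0\rangle$, so $\bar p\in S$. Lower semicontinuity of the norm gives $\|\bar p\|\le\|p_0\|$. By uniqueness of the minimal-norm element of $S$, $\bar p=p_0$, and since the subsequence was arbitrary, the whole family satisfies $p_{\lambda,0}\to p_0$.

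Finally, I will transfer this to the certificates. By \eqref{eq:predual},
\[
\|\eta_{\lambda,0}-\eta_0\|_\infty=\Big\|\sum_j (p_{\lambda,0}^j-p_0^j)\sigma(\langle \cdot,x^j\rangle)\Big\|_\infty\le \Big(\sum_j\|x^j\|\Big)\,\|p_{\lambda,0}-p_0\|_\infty ,
\]
using $|\sigma(\langle w,x^j\rangle)|\le\|x^j\|$ on the sphere. This bound tends to $0$, which proves the claim.

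The only delicate point is the selection argument showing that the limit is the minimal-norm maximizer rather than some other element of $S$. This is handled by the inequality $\|p_{\lambda,0}\|\le\|p_0\|$, which comes directly from the optimality comparison above. Finite-dimensionality of $Y=\R^n$ makes the compactness step trivial.
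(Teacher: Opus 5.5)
Your proof is correct and has the same structure as the paper's: both reduce the claim to $p_{\lambda,0}\to p_0$ via the Lipschitz bound $\|K_*q\|_\infty\le \sum_j\|x^j\|\,|q^j|$. The only difference is that the paper obtains $p_{\lambda,0}\to p_0$ by citing \cite[Proposition 1]{Peyre}, whereas you prove it directly. You use the optimality comparison giving $\|p_{\lambda,0}\|\le\|p_0\|$, then compactness and uniqueness of the minimal-norm maximizer, which is essentially the argument behind the cited result and makes your version self-contained.
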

\begin{proof}
   By definition and \eqref{eq:predual}, for every $w\in\mathbb{S}^{d-1}$ we have
   \begin{align}
       \eta_{\lambda,0}(w)-\eta_0(w)
= \sum_{j=1}^n \bigl(p_{\lambda,0}^j-p_0^j\bigr)\,\sigma(\langle w,x^j\rangle).
   \end{align}
   Hence,
\begin{align*}
\|\eta_{\lambda,0}-\eta_0\|_\infty
&= \sup_{w\in\mathbb{S}^{d-1}}\left|\sum_{j=1}^n \bigl(p_{\lambda,0}^j-p_0^j\bigr)\,\sigma(\langle w,x^j\rangle)\right| \\
&\leq \sum_{j=1}^n |p_{\lambda,0}^j-p_0^j| \, \sup_{w\in\mathbb{S}^{d-1}} \sigma(\langle w,x^j\rangle).
\end{align*}
Since $\sigma(z)=\max\{0,z\}$ and $\sup_{\|w\|=1}\langle w,x^j\rangle=\|x^j\|$, we obtain
$\sup_{w\in\mathbb{S}^{d-1}} \sigma(\langle w,x^j\rangle)=\|x^j\|.$
If we let $C:=\max_{1\leq j\leq n}\|x^j\|<\infty$, then
\begin{equation}
\|\eta_{\lambda,0}-\eta_0\|_\infty
\leq C \sum_{j=1}^n |p_{\lambda,0}^j-p_0^j|
= C\,\|p_{\lambda,0}-p_0\|_1.
\end{equation}
By \cite[Proposition 1]{Peyre}, $p_{\lambda,0}\to p_0$ in $\mathbb R^n$, implying $\|p_{\lambda,0}-p_0\|_1\to 0$ as $\lambda \rightarrow 0$. Therefore, it holds
\begin{align}
\lim_{\lambda\to 0^+}\|\eta_{\lambda,0}-\eta_0\|_\infty = 0
\end{align}
as desired.
    \end{proof}

\subsection{Dual regions}

 Given vectors $\{x^1,\ldots,x^n\}\subset\R^d$ (assume $x^i\neq 0$), we consider the hyperplane arrangement
\[
H_i:=\{w\in\R^d:\langle w,x^i\rangle=0\}, \qquad i=1,\ldots,n,
\]
together with the associated open half-spaces
\[
A_i^{1}:=\{w\in\R^d:\langle w,x^i\rangle>0\}, \qquad
A_i^{0}:=\{w\in\R^d:\langle w,x^i\rangle<0\}.
\]
A binary vector $\pi=(\pi_1,\ldots,\pi_n)\in\{0,1\}^n$ prescribes, for each $i$, a sign pattern for the scalars
$\langle w,x^i\rangle$ and thus determines the intersection
\begin{align}
R_\pi:=\bigcap_{i=1}^n A_i^{\pi_i}.
\end{align} 
We call the (possibly empty) open subset $R_\pi\subset \R^d$ a \emph{dual region}. The term “dual” emphasizes that these regions live in the \emph{weight space} ($w$-variable), in contrast with the
more common “linear region” decompositions in \emph{input space} ($x$-variable). 
Intersections of this type are well known in the literature of neural networks analysis. We refer, for example, to the “sectors” of \cite{Sectors_article,dedios2020sparsity}, the linear regions studied in \cite{montufar2014number}, and the activation regions introduced in \cite{karhadkar2024mildly}. For instance, see Figure~\ref{fig:dualregions_d2}, which depicts the spherical dual regions $R_\pi\cap\mathbb S^{1}$ for $d=n=2$.
\begin{definition}[Dual region]\label{def:dualregion}
Given $\pi=(\pi_1,\ldots,\pi_n)\in\{0,1\}^n$, we call a \emph{dual region} associated with $\pi$ 
\begin{align}
R_{\pi}:=\bigcap_{i=1}^n A_i^{\pi_i}.
\end{align}
\end{definition}

\begin{figure}[t]
\centering
\begin{tikzpicture}[scale=1.1, line cap=round, line join=round]
  \def\R{2.0}     
  \def\a{30}       
  \def\b{70}     

  \coordinate (O) at (0,0);

  \pgfmathsetmacro{\aMinus}{\a-90}
  \pgfmathsetmacro{\aPlus}{\a+90}
  \pgfmathsetmacro{\bMinus}{\b-90}
  \pgfmathsetmacro{\bPlus}{\b+90}

  \begin{scope}
    \clip (O) circle (\R);

    \path[fill=blue!20]   (O) -- (\bMinus:\R) arc (\bMinus:\aPlus:\R) -- cycle;
    \path[fill=teal!18]   (O) -- (\aMinus:\R) arc (\aMinus:\bMinus:\R) -- cycle; 
    \path[fill=red!18]    (O) -- (\aPlus:\R)  arc (\aPlus:\bPlus:\R)  -- cycle; 
    \path[fill=yellow!20] (O) -- (\bPlus:\R)  arc (\bPlus:\aMinus+360:\R) -- cycle; 
  \end{scope}

  % --- circle
  \draw[thick] (O) circle (\R);

  % --- axes
  \draw[->] (-2.5,0) -- (2.5,0);
  \draw[->] (0,-2.5) -- (0,2.5);

  \draw[dashed, thick] (\aPlus:\R) -- (\aMinus:\R)
  node[pos=0.82, below left, inner sep=1pt] {$H_1$};

\draw[dashed, thick] (\bPlus:\R) -- (\bMinus:\R)
  node[pos=0.82, above right, inner sep=1pt, xshift=-1pt, yshift=-1pt] {$H_2$};

 \draw[->, thick] (O) -- (\a:\R) node[above right] {$x^1$};
\draw[->, thick] (O) -- (\b:\R) node[above right] {$x^2$};

  \node at ({(\bMinus+\aPlus)/2}:1.25) {$R_{(1,1)}$};
  \node at ({(\aMinus+\bMinus)/2}:1.35) {$R_{(1,0)}$};
  \node at ({(\aPlus+\bPlus)/2}:1.35) {$R_{(0,1)}$};
  \node at ({(\bPlus+\aMinus+360)/2}:1.35) {$R_{(0,0)}$};

\end{tikzpicture}
\caption{Dual regions on $\mathbb S^1$ induced by two inputs ($d=n=2$).}
\label{fig:dualregions_d2}
\end{figure}
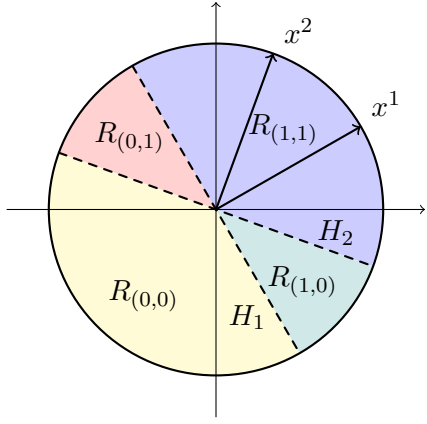
\begin{comment}
\begin{figure}
    \centering
   \begin{tikzpicture}
        \draw (0,0) circle(2cm);
        \fill[teal!40](-1.72,1) arc(150:300:2)--(0,0);
        \fill[blue!40](-1,1.72) arc(120:150:2)--(0,0);
        \fill[red!40](1.72,-1) arc(-30:120:2)--(0,0);
        \fill[yellow!40](1,-1.72) arc(300:330:2)--(0,0);
        \draw[->] (-2.5,0) -- (2.5,0);
        \draw[->] (0,-2.5) -- (0,2.5);
        \draw[->] (0,0)--(1.68, 1) node[below]{$x^1$};
        \draw[->] (0,0)--(1, 1.68) node[left]{$x^2$};
        \draw[dashed](-1,1.72)--(1,-1.72);
        \draw[dashed](-1.72,1)--(1.72,-1);
        \draw (-1,-1) node{$R_\pi$}; 
    \end{tikzpicture}
    \caption{Regions with $d=n=2$.}
    \label{fig:enter-label}
\end{figure}
\end{comment}
It is well known from classical results on linearly separable dichotomies/central hyperplane arrangements (Vapnik–Chervonenkis dimension theory) that the number of distinct binary vectors $\pi$ such that the dual region $R_\pi$ is non-empty is at most
$2\sum_{k=0}^{d-1}\binom{n-1}{k}$; equality holds when the hyperplanes $\{H_i\}_{i=1}^n$ are in general position (see, e.g., \cite[Theorem~1]{Cover1965}). In particular, thanks to the binomial theorem, we obtain that there are at most $2^n$ possible combinations (independent of $d$) if and only if $n\leqslant d$. Instead, if $n>d$, then, one can bound the number of these combinations by $\mathcal{O}(n^d)$ (refer to \cite[Proposition~6]{karhadkar2024mildly}, \cite[Chapter~4]{Sectors_article}). 
In what follows, we will denote by $\Theta$ the set of binary vectors $\pi$ such that the region $R_\pi$ is non-empty. 

Our analysis will necessarily deal with the boundary of dual regions. To capture optimality on these sets, we will introduce the concept of \emph{higher codimension} dual regions, obtained by allowing some of the defining inequalities to become equalities. This yields a natural stratification of the weight space into open cells of varying codimension, which will be crucial for our study. We also remark that the need for such structures has already been noted in \cite{karhadkar2024mildly,Sectors_article}.
%Since $w\in \mathbb{S}^{d-1}$ we can use the parametrization \eqref{ps} to obtain, in a fixed region $R_{\pi}$, the following identity:
%\begin{equation}\label{eta0}
   % \bar{\eta}_0(\theta)=\sum_{i=1}^n p_0^i\pi_i(\cos\theta x_{i,1}+\sin \theta x_{i,2})=c_{0,k}^{1}\cos\theta+c_{0,k}^{2}\sin\theta,%=\sum_{i=1}^n p_0^i\pi_ix^i^2+w_1\sum_{i=1}^n p_0^i\pi_i(x^i^1-x^i^2).
%\end{equation}
%where $c_{0,k}^{1}=\sum_{i=1}^n p_0^i\pi_ix_{i,1}\neq 0$ and $c_{0,k}^{2}=\sum_{i=1}^n p_0^i\pi_ix_{i,2}\neq 0$.
%We can observe that $\bar{\eta}_0$ is differentiable in the region $R_{\pi}$ (maybe we can say $\eta_0$ is piecewise differentiable as in \cite{Sectors_article}), and the critical points are:
%\begin{equation}
  %  \eta'_0=-c_{0,k}^{1}\sin \theta+c_{0,k}^{2}\cos\theta=0\Leftrightarrow \theta=\tan^{-1}\left(\frac{c_{0,k}^{1}}{c_{0,k}^{2}}\right).
%\end{equation}
%Therefore the uniqueness of the maximum or the minimum in every region is already given. 
%\textcolor{red}{
%\begin{Lemma}
%Two regions $R_{\pi_1}$ and $R_{\pi_1}$ of codimension $n-k$ have boundary in common if and only if 
%\begin{align}
%    \{i: \pi_1(i) = 0\}  = \{i: \pi_2(i) = 0\}
%\end{align}
%and there exists only one $\hat i \notin \{i: \pi_1(i) = 0\}$ such that $\pi_1(\hat i) \neq \pi_2(\hat i)$.
%\end{Lemma}
%}
Consider the index set $I:=\{1,\ldots,n\}$ and, for $k\leq n$, let $J^k=\{j_1,\ldots,j_k\}\subset I$. We aim to define regions of codimension $n-k$ that belong to the topological boundary of a given region $R_\pi$.
\begin{definition}[Higher–codimension dual regions]\label{def:hig_cod_reg}
   Let $\pi\in\{0,1\}^n$ and $R_\pi$ be as in Definition~\ref{def:dualregion}. 
   %Given $k\leq n$ and $J^k \subset I$, we define the $J^k$ boundary of $R_\pi$ as follows:
%\begin{equation}\label{boundaryregions}
%R^{J^k}_{\pi}:=\bigcap_{j\in J^k}A_{j}^{\pi_j} \bigcap_{z\in I\setminus J^k}\left\{w \in \mathbb{R}^d:\langle w, x^z\rangle= 0\right\}
  % \end{equation}
%with the convention that $R_\pi^{I} = R_\pi$.
For $J^k\subset I$ with $|J^k|=k$,  we define the $J^k$-stratum of $R_\pi$ as follows:
\begin{equation}\label{boundaryregions}
R^{J^k}_\pi :=  \Big(\bigcap_{j\in J^k} A_j^{\pi_j}\Big)\;\cap\; \Big(\bigcap_{z\in I\setminus J^k} H_z\Big),
   \end{equation}
where $H_z:=\{w\in\R^d:\langle w,x^z\rangle=0\}$. By convention, $R_\pi^{I}=R_\pi$.
\end{definition}
The set $R^{J^k}_\pi$ lies in $L^{J^k}:=\bigcap_{z\in I\setminus J^k} H_z$, which has codimension at most $|I\setminus J^k|=n-k$ in $\mathbb R^d$
(and equals $n-k$ if the normals $\{x^z\}_{z\in I\setminus J^k}$ are linearly independent).
Similarly to the codimension zero case, we denote by 
\begin{align}
\Theta^{J^k}:=\{\pi\in\{0,1\}^n:\;R^{J^k}_\pi\neq\emptyset\}
\end{align}
the set of sign patterns for which the stratum $R^{J^k}_{\pi}$ is non-empty. For simplicity, we are going to call $R^{J^k}_{\pi}$ again a dual region.
\begin{remark}[Boundary stratification]\label{boundary}
    Let $\overline{R}^{J^{k}}_{\pi}$ denote the closure of $R^{J^{k}}_{\pi}$ in the relative topology of
$L^{J^k}$. Here and below, $\partial$ denotes the boundary taken in the same relative topology. Note that the topological boundary of a region $R_\pi$ can be simply rewritten as \begin{equation}\label{full_boundary}
     \partial R_\pi = \bigcup_{J^{n-1}\subset I} \overline R^{J^{n-1}}_\pi,
    \end{equation}
    that is, the union of the closures of its codimension $1$ faces. Lower dimensional strata are contained in these closures.
    More generally, for $k=1,\ldots,n-2$, the topological boundary of $R_\pi^{J^k}$ becomes:
\begin{align}\label{boundary_k}
        \partial R^{J^k}_\pi = \bigcup_{J^{k-1}\subset J^k} \overline R^{J^{k-1}}_\pi,
    \end{align}
    where $|J^{k-1}|=k-1$.
\end{remark}

\subsection{Sparse characterization and upper bounds on the number of Dirac deltas}

The derivation of an exact reconstruction result for the interpolation hard-constrained problem \refeq{eq:interpolationproblem} is based on the behaviour of the dual certificate in the dual regions, determined by the input data $\{x^1,\ldots,x^n\}$. For this, it is crucial to choose the ReLU as activation function $\sigma$. First, it is easy to see that since  $\sigma(z) = {\rm ReLU}(z)$, for every $i =1,\ldots,n$, we can write
\begin{equation}\label{eq:ee}
    \sigma(\langle w, x^i\rangle)=\pi_i\langle w, x^i\rangle \quad \text{for every }  w \in R_\pi.
\end{equation}

As a consequence the minimal-norm dual certificate can be written as a linear function in each dual region $R_\pi$ as stated in the next lemma. When $\langle w,x^i\rangle=0$ we have $\sigma(\langle w,x^i\rangle)=0$; boundary cases will be treated via the strata $R_\pi^{J^k}$.

\begin{Lemma}\label{lem:dualinregion}
    Given a binary vector $\pi$, for every $w\in R_{\pi}$, every dual certificate $\eta_0$ can be rewritten as
    \begin{equation}
        \eta_0(w)= \sum_{i=1}^n p_0^i\pi_i \langle w, x^i \rangle. 
    \end{equation}
\end{Lemma}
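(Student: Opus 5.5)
The plan is to combine the explicit form of the adjoint from Lemma~\ref{lem:predual} with the pointwise identity \eqref{eq:ee}.

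First, we recall what a dual certificate is. By definition, every dual certificate for \ref{eq:interpolationproblem} has the form $\eta_0 = K_*p_0$ for some $p_0\in\R^n$. This holds whether $p_0$ is the minimal-norm maximizer of \eqref{eq:dual-interp} or any other admissible vector. By \eqref{eq:predual}, for all $w\in\mathbb{S}^{d-1}$ (indeed for all $w\in\R^d$, with the same formula) we then have
\[
\eta_0(w)=\sum_{i=1}^n p_0^i\,\sigma(\langle w,x^i\rangle).
\]

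Second, we fix $\pi\in\{0,1\}^n$ and $w\in R_\pi$, and verify \eqref{eq:ee} componentwise. Since $R_\pi=\bigcap_i A_i^{\pi_i}$, each index $i$ falls into one of two cases.
\begin{itemize}
\item If $\pi_i=1$, then $w\in A_i^1$, so $\langle w,x^i\rangle>0$. Hence $\sigma(\langle w,x^i\rangle)=\langle w,x^i\rangle=\pi_i\langle w,x^i\rangle$.
\item If $\pi_i=0$, then $w\in A_i^0$, so $\langle w,x^i\rangle<0$. Hence $\sigma(\langle w,x^i\rangle)=0=\pi_i\langle w,x^i\rangle$.
\end{itemize}

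Third, we substitute these identities term by term into the sum, which yields $\eta_0(w)=\sum_i p_0^i\pi_i\langle w,x^i\rangle$.

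There is no genuine obstacle; this is a direct computation. The only point to keep in mind is that the statement concerns the open region $R_\pi$, where every inequality is strict, so no boundary cases arise. Those cases, where $\langle w,x^i\rangle=0$, are deferred to the strata $R_\pi^{J^k}$. Note that the same formula still holds there, since both sides vanish for indices in $I\setminus J^k$.

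One consequence is worth recording for later use. On $R_\pi$ the certificate is the restriction of the linear functional $w\mapsto\langle w, v_\pi\rangle$, where
\[
v_\pi=\sum_i p_0^i\pi_i x^i.
\]
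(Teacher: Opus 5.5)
Your proposal is correct and follows the paper's argument: the paper's proof says only that the lemma follows from the adjoint formula \eqref{eq:predual} of Lemma~\ref{lem:predual} combined with the pointwise identity \eqref{eq:ee}, and you verify that identity case by case. Your side remark that the formula persists on the strata $R_\pi^{J^k}$, because both sides vanish for indices in $I\setminus J^k$, is also correct.
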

\begin{proof}
The lemma follows directly from Lemma \ref{lem:predual} and \eqref{eq:ee}.
%One computation shows the result:
%\begin{equation}
%\begin{aligned}
%    \eta_0(w)&=p_0^1\pi_1\langle w, x^1\rangle+\cdots+p_0^n\pi_n\langle w, x^n\rangle\\
%    &=\langle w,\sum_{i=1}^n p_0^i\pi_ix^i\rangle \\
%    &=w_1\sum_{i=1}^n p_0^i\pi_ix^i_{1}+\ldots+w_d\sum_{i=1}^n p_0^i\pi_ix^i_{d}\\
%    &=\sum_{i=1}^n p_0^i\pi_i(w_1x^i_{1}+\dots+w_dx^i_{d}),
%    \end{aligned}
%\end{equation}
%where $\pi_1,\ldots, \pi_n\in \{0,1\}$. 
\end{proof}

In particular, every dual certificate $\eta_0\in C(\mathbb S^{d-1})$ is the restriction to $\mathbb S^{d-1}$ of a function that is linear on each cone $R_\pi$ and thus globally piecewise linear. This enables us to use the dual certificate theory summarized in Section \ref{sec:tvreg} to obtain information about the support of the minimizers of \ref{eq:interpolationproblem} and \ref{eq:training}. 
Indeed, in the next lemma, we use the piecewise linear structure of the dual certificate to show that the number of points where its absolute value achieves one is finite. Thanks to the optimality conditions in \eqref{oc_hard}, this will imply that the support of the minimizers of \ref{eq:interpolationproblem} is a discrete set (see Corollary \ref{cor:exactreconstruction}).

\begin{Lemma}\label{unique_c.p.}
Let $p_0$ be a solution to \ref{eq:dual-interp} and let $\eta_0=K_*p_0$ be a dual certificate.
Fix $\pi\in\{0,1\}^n$, $J^k\subset I$ with $|J^k|=k$, and set
\[ 
z_0:=\sum_{i\in J^k} p_0^i\pi_i x^i,
\qquad 
\tilde z_0:=P_{L^{J^k}} z_0,
\]
where $P_{L^{J^k}}$ denotes the orthogonal projection onto $L^{J^k}$.
If there exists $w_0\in R_{\pi}^{J^k}\cap \mathbb S^{d-1}$ such that $|\eta_0(w_0)|=1$, then $\tilde z_0\neq 0$ and
\[
w_0 = \mathrm{sign}(\eta_0(w_0))\,\frac{\tilde z_0}{\|\tilde z_0\|}.
\]
Moreover, $|\eta_0(w)|<1$ for every $w\in \big(\overline{R}_{\pi}^{J^k}\cap\mathbb S^{d-1}\big)\setminus\{w_0\}$.
\end{Lemma}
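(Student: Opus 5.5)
The plan is to exploit that $\eta_0$ is linear on the closed stratum. For $w\in \overline{R}_\pi^{J^k}$ we have $\langle w,x^z\rangle=0$ for $z\in I\setminus J^k$, and for $j\in J^k$ we have $\langle w,x^j\rangle\geq 0$ if $\pi_j=1$ and $\leq 0$ if $\pi_j=0$. In both cases $\sigma(\langle w,x^j\rangle)=\pi_j\langle w,x^j\rangle$, by continuity from \eqref{eq:ee} and the fact that $\sigma(0)=0$. By Lemma \ref{lem:predual} this gives
\[
\eta_0(w)=\sum_{j\in J^k}p_0^j\pi_j\langle w,x^j\rangle=\langle w,z_0\rangle=\langle w,\tilde z_0\rangle ,\qquad w\in \overline{R}_\pi^{J^k},
\]
where the last equality uses $w\in L^{J^k}$ and the self-adjointness of $P_{L^{J^k}}$. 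This identity should be established first, for the closure and not only for the open stratum.

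Next, take $w_0\in R_\pi^{J^k}\cap\mathbb S^{d-1}$ with $|\eta_0(w_0)|=1$. Cauchy--Schwarz gives $1=|\langle w_0,\tilde z_0\rangle|\leq \|\tilde z_0\|$, so $\tilde z_0\neq 0$. Also, the constraint $\|\eta_0\|_\infty\le 1$ of \ref{eq:dual-interp} restricts the linear function $w\mapsto \langle w,\tilde z_0\rangle$ on the relatively open subset $R_\pi^{J^k}\cap\mathbb S^{d-1}$ of the sphere $L^{J^k}\cap\mathbb S^{d-1}$. Since $w_0$ lies in this relatively open set, it is a local extremum of $\langle\cdot,\tilde z_0\rangle$ on the sphere of $L^{J^k}$. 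Lagrange multipliers within $L^{J^k}$ then give $\tilde z_0=\beta w_0$ with $\beta\neq 0$. Hence $w_0=\pm\tilde z_0/\|\tilde z_0\|$, with the sign fixed by $\eta_0(w_0)=\beta=\mathrm{sign}(\eta_0(w_0))\|\tilde z_0\|$. This also shows $\|\tilde z_0\|=|\eta_0(w_0)|=1$.

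For strictness, let $w\in \overline{R}_\pi^{J^k}\cap\mathbb S^{d-1}$. Then $|\eta_0(w)|=|\langle w,\tilde z_0\rangle|\le \|\tilde z_0\|=1$, with equality only if $w=\pm\tilde z_0/\|\tilde z_0\|=\pm w_0$. It remains to exclude $w=-w_0$. If $-w_0\in\overline R_\pi^{J^k}$, then for every $j\in J^k$ the sign constraints force $\langle w_0,x^j\rangle$ to be both $\geq0$ and $\le0$. But $w_0\in R^{J^k}_\pi$ gives the strict inequality $\langle w_0,x^j\rangle\neq0$, a contradiction when $J^k\neq\emptyset$. In the degenerate case $k=0$ the stratum is the linear space $L^{J^0}$, $z_0=0$, and $\eta_0\equiv0$ there, so the hypothesis cannot hold. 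Hence $|\eta_0(w)|<1$ for $w\neq w_0$.

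The main obstacle is the careful bookkeeping on the closure. The identity $\eta_0=\langle\cdot,\tilde z_0\rangle$ must hold on $\overline R^{J^k}_\pi$, including points where some $\langle w,x^j\rangle$ with $j\in J^k$ vanish. The antipodal point also needs the separate argument above. The Lagrange/local-maximum step needs only that $R_\pi^{J^k}$ is relatively open in $L^{J^k}$, which is immediate from the strict inequalities.
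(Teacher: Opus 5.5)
Your proof is correct, and it departs from the paper's at the decisive step.

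The paper asserts $|\eta_0(w)|=|\langle\tilde z_0,w\rangle|\le\|\tilde z_0\|$ for all $w\in L^{J^k}\cap\mathbb S^{d-1}$. From $|\eta_0(w_0)|=1$ it obtains $\|\tilde z_0\|\ge 1$, and then invokes the equality case of Cauchy--Schwarz. That equality case needs $\|\tilde z_0\|=1$. The constraint $\|\eta_0\|_\infty\le 1$ does not give $\|\tilde z_0\|\le 1$ directly, because $\eta_0$ agrees with $\langle\cdot,\tilde z_0\rangle$ only on $\overline R^{J^k}_\pi$, which need not contain $\tilde z_0/\|\tilde z_0\|$.

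Your first-order argument supplies exactly this missing step: a relatively interior extremum of $\langle\cdot,\tilde z_0\rangle$ on $L^{J^k}\cap\mathbb S^{d-1}$ forces $\tilde z_0=\beta w_0$ with $|\beta|=1$. This is where the relative openness of $R^{J^k}_\pi$ in $L^{J^k}$ does the real work, and it cannot be dispensed with. Take $x^1=e_1$, $x^2=e_2$ in $\mathbb R^2$ and $p_0=(1,-1)$. This is a dual solution for $y_0=K\delta_{e_1}$, since $\eta_0=\sigma(w_1)-\sigma(w_2)$ is bounded by $1$ on $\mathbb S^1$. Then $\eta_0(e_1)=1$ at a boundary point of $R_{(1,1)}$, while $\tilde z_0/\|\tilde z_0\|=(1,-1)/\sqrt2\neq e_1$. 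So Cauchy--Schwarz reasoning that never distinguishes interior from boundary points cannot be complete.

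Two further steps of yours are tidier than the paper's counterparts. You verify explicitly that $\eta_0=\langle\cdot,\tilde z_0\rangle$ on the closed stratum, which the paper uses only tacitly in the ``moreover'' part. You exclude $-w_0$ through an arbitrary $j\in J^k$ and handle $k=0$ separately. The paper's route is shorter only because it skips the justification of $\|\tilde z_0\|=1$.
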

\begin{proof}
First, note that on $R_{\pi}^{J^k}\cap\mathbb S^{d-1}$ the dual certificate $\eta_0$ can be written as
\[
\eta_0(w)=\sum_{i\in J^k} p_0^i\pi_i\langle w,x^i\rangle
=\langle z_0,w\rangle,
\qquad z_0:=\sum_{i\in J^k} p_0^i\pi_i x^i.
\]
Since $w\in L^{J^k}$, we also have $\langle z_0,w\rangle=\langle P_{L^{J^k}}z_0,w\rangle=\langle \tilde z_0,w\rangle$,
where $\tilde z_0:=P_{L^{J^k}}z_0$.
Assume there exists $w_0\in R_{\pi}^{J^k}\cap\mathbb S^{d-1}$ such that $|\eta_0(w_0)|=1$.
Then $\tilde z_0\neq 0$ (otherwise $\eta_0\equiv 0$ on $L^{J^k}\cap\mathbb S^{d-1}$).
By Cauchy--Schwarz, for every $w\in L^{J^k}\cap\mathbb S^{d-1}$,
\[
|\eta_0(w)|=|\langle \tilde z_0,w\rangle|\leq \|\tilde z_0\|,
\]
with equality if and only if $w=\pm \tilde z_0/\|\tilde z_0\|$.
Since $|\eta_0(w_0)|=1$, necessarily $\|\tilde z_0\|\geq  1$, and equality in Cauchy--Schwarz implies
\[
w_0=\mathrm{sign}(\eta_0(w_0))\,\frac{\tilde z_0}{\|\tilde z_0\|}.
\]
To prove uniqueness inside $\overline{R}_{\pi}^{J^k}\cap\mathbb S^{d-1}$, suppose that
$|\eta_0(\bar w)|=1$ for some $\bar w\in \overline{R}_{\pi}^{J^k}\cap\mathbb S^{d-1}$.
Then by the same argument $\bar w=\pm \tilde z_0/\|\tilde z_0\|$.
The choice $\bar w=-w_0$ is incompatible with belonging to $\overline{R}_{\pi}^{J^k}$.
Indeed, for any index $i\in J^k$ with $\pi_i=1$ we have $\langle w_0,x^i\rangle\geq  0$ on the closure,
hence $\langle -w_0,x^i\rangle\leq 0$, which would force $-w_0$ to lie in the opposite half-space (or on the hyperplane),
contradicting $\bar w\in \overline{R}_{\pi}^{J^k}$ unless all such scalar products vanish (which cannot occur if $|\eta_0(w_0)|=1$).
Therefore $\bar w=w_0$, and the statement follows.
\end{proof}

\begin{remark}
Note that the previous lemma put constraints on the points where the absolute value of the dual certificate reaches one. In particular, under the assumption of the previous proposition it also holds that 
$|\eta_0(w)|<1$ for every $w\in \bigcup_{J^{k-1}\subset J^k} \overline R_\pi^{J^{k-1}} \cap \mathbb{S}^{d-1}$.
Moreover, if $w_0$ belongs to a region of $0$-codimension, that is if $k = n$, then $\tilde z_0 = z_0$.  
\end{remark}

A similar statement holds for the dual certificate associated with \ref{eq:training}.

\begin{Lemma}\label{unique_c.p.soft}
    Let $p_{\lambda,\zeta} \in \R^n$ be the solution to \ref{eq:dual-training} and let $\eta_{\lambda,\zeta} = K_* p_{\lambda,\zeta}$ be the dual certificate for \ref{eq:training}. 
    Fix $\pi\in\{0,1\}^n$, $J^k\subset I$ with $|J^k|=k$, and set
    \begin{align}\label{eq:gated_vector_1}
    \ds{z_{\lambda,\zeta}=\sum_{i\in J^k} p_{\lambda,\zeta}^i\pi_ix^{i}}, \quad \tilde z_{\lambda,\zeta} = P_{L^{J^k}} z_{\lambda,\zeta}.
    \end{align}
    
    If there exists $w_{\lambda,\zeta}\in  R_{\pi}^{J^k} \cap \mathbb{S}^{d-1}$  such that $|\eta_{\lambda,\zeta}(w_{\lambda,\zeta})| = 1$, then $\tilde z_{\lambda,\zeta} \neq 0$ and
    \begin{align}\label{eq:gated_vector_2}
        w_{\lambda,\zeta} = \mathrm{sign}(\eta_{\lambda,\zeta}(w_{\lambda,\zeta})) \frac{\tilde z_{\lambda,\zeta}}{\|\tilde z_{\lambda,\zeta}\|}.
    \end{align}
    Moreover, $|\eta_{\lambda,\zeta}(w)|<1$ for every $w \in (\overline R^{J^k}_{\pi} \cap \mathbb{S}^{d-1}) \setminus \{w_{\lambda,\zeta}\}$.
\end{Lemma}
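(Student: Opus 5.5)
The argument is the same as for Lemma~\ref{unique_c.p.}. The only features of $p_0$ used there are that $\eta_0=K_*p_0$ has the form \eqref{eq:predual} and satisfies $\|\eta_0\|_\infty\le 1$. The maximizer $p_{\lambda,\zeta}$ of \ref{eq:dual-training} is feasible, so it also satisfies $\|K_*p_{\lambda,\zeta}\|_\infty\le 1$. I would therefore repeat the proof with $p_0$ replaced by $p_{\lambda,\zeta}$.

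\textbf{Step 1: linear representation on the stratum.} Take $w\in R_\pi^{J^k}$. For $i\in J^k$, \eqref{eq:ee} gives $\sigma(\langle w,x^i\rangle)=\pi_i\langle w,x^i\rangle$. For $z\in I\setminus J^k$ we have $\langle w,x^z\rangle=0$, so $\sigma(\langle w,x^z\rangle)=0$. Lemma~\ref{lem:predual} then yields $\eta_{\lambda,\zeta}(w)=\langle z_{\lambda,\zeta},w\rangle$. Since $w\in L^{J^k}$, this equals $\langle \tilde z_{\lambda,\zeta},w\rangle$. By continuity of both sides, the identity extends to the relative closure $\overline R_\pi^{J^k}$.

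\textbf{Step 2: location of $w_{\lambda,\zeta}$.} If $|\eta_{\lambda,\zeta}(w_{\lambda,\zeta})|=1$, then $\tilde z_{\lambda,\zeta}\neq0$. By Cauchy--Schwarz on $L^{J^k}\cap\mathbb S^{d-1}$,
\[
1=|\langle\tilde z_{\lambda,\zeta},w_{\lambda,\zeta}\rangle|\le\|\tilde z_{\lambda,\zeta}\|.
\]
Feasibility gives $\|\tilde z_{\lambda,\zeta}\|=\sup_{L^{J^k}\cap\mathbb S^{d-1}}|\langle\tilde z_{\lambda,\zeta},\cdot\rangle|$. To get this supremum is at most $1$, I would use that $\eta_{\lambda,\zeta}$ agrees with the linear form only on $\overline R^{J^k}_\pi$, not on all of $L^{J^k}$. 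The cleaner route is to skip this bound: $|\langle\tilde z_{\lambda,\zeta},w\rangle|\le 1$ holds on $\overline R^{J^k}_\pi\cap\mathbb S^{d-1}$, and the value $1$ is attained at the relatively interior point $w_{\lambda,\zeta}$. Hence $w_{\lambda,\zeta}$ is a local maximizer of $\pm\langle\tilde z_{\lambda,\zeta},\cdot\rangle$ on the sphere of $L^{J^k}$. Lagrange multipliers then force $w_{\lambda,\zeta}=\pm\tilde z_{\lambda,\zeta}/\|\tilde z_{\lambda,\zeta}\|$, since the only critical points of a linear form on a sphere are these two. The sign matches $\mathrm{sign}(\eta_{\lambda,\zeta}(w_{\lambda,\zeta}))$. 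This gives \eqref{eq:gated_vector_2}, and also $\|\tilde z_{\lambda,\zeta}\|=1$.

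\textbf{Step 3: strict inequality elsewhere on the closure.} Let $\bar w\in\overline R^{J^k}_\pi\cap\mathbb S^{d-1}$. By Step~1 and Cauchy--Schwarz, $|\eta_{\lambda,\zeta}(\bar w)|\le\|\tilde z_{\lambda,\zeta}\|=1$, with equality only if $\bar w=\pm w_{\lambda,\zeta}$. It remains to exclude $\bar w=-w_{\lambda,\zeta}$.

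\textbf{Main obstacle: ruling out $\bar w=-w_{\lambda,\zeta}$.} Since $\eta_{\lambda,\zeta}(w_{\lambda,\zeta})\neq0$, some $i\in J^k$ has $\pi_i=1$ and $\langle w_{\lambda,\zeta},x^i\rangle\neq0$. Because $w_{\lambda,\zeta}\in R_\pi^{J^k}$, this inner product is in fact $>0$. Then $\langle -w_{\lambda,\zeta},x^i\rangle<0$, so $-w_{\lambda,\zeta}$ violates the closed constraint $\langle\cdot,x^i\rangle\ge0$ defining $\overline R_\pi^{J^k}$. Hence $\bar w=w_{\lambda,\zeta}$, which completes the proof.
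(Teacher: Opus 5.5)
Your proof is correct. Its skeleton is the one the paper intends: linear representation on the stratum, Cauchy--Schwarz, and exclusion of the antipodal point through an index $i\in J^k$ with $\pi_i=1$. The paper gives no separate proof of this lemma. It simply transfers the proof of Lemma~\ref{unique_c.p.}, which, as you observe, uses only the form of $K_*p$ and the feasibility bound $\|K_*p\|_\infty\le 1$.

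You genuinely differ at the central step. The paper applies Cauchy--Schwarz to $\eta_0(w)=\langle\tilde z_0,w\rangle$ ``for every $w\in L^{J^k}\cap\mathbb S^{d-1}$'' and deduces only $\|\tilde z_0\|\geq 1$. It then invokes the equality case of Cauchy--Schwarz. That equality case requires $\|\tilde z_0\|=1$, i.e.\ $|\langle\tilde z_0,\cdot\rangle|\le 1$ on the whole unit sphere of $L^{J^k}$. But $\eta_0$ coincides with this linear form only on $\overline R^{J^k}_\pi$, so the bound is not available there.

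You spotted exactly this and replaced it with a first-order argument. Since $R^{J^k}_\pi$ is relatively open in $L^{J^k}$ (open half-spaces intersected with $L^{J^k}$), $w_{\lambda,\zeta}$ is a local maximizer of $\mathrm{sign}(\eta_{\lambda,\zeta}(w_{\lambda,\zeta}))\langle\tilde z_{\lambda,\zeta},\cdot\rangle$ on the unit sphere of $L^{J^k}$. It is therefore a critical point, hence equal to $\pm\tilde z_{\lambda,\zeta}/\|\tilde z_{\lambda,\zeta}\|$ (trivially so when $\dim L^{J^k}=1$), and $\|\tilde z_{\lambda,\zeta}\|=1$ follows. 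Only after this do you use the equality case of Cauchy--Schwarz on the closure for the strict inequality, where it is now legitimate.

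Your version costs one elementary extra observation, but it actually establishes $\|\tilde z_{\lambda,\zeta}\|=1$, which the paper's argument tacitly assumes. Your exclusion of $-w_{\lambda,\zeta}$ is also cleaner than the paper's closure-based phrasing: it uses strict positivity of $\langle w_{\lambda,\zeta},x^i\rangle$ at a point of the open stratum.
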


From the previous lemmas it follows directly the mentioned sparse characterization of minimizers of \ref{eq:interpolationproblem} and \ref{eq:training}.

\begin{theorem}[Sparsity of minimizers]\label{cor:exactreconstruction}
The following two statements hold:
\begin{itemize}
    \item  There exists $N \in \N$ such that all the solutions to \ref{eq:interpolationproblem} are of the form 
    \begin{align}
        \mu_0 = \sum_{i=1}^N c^i_0 \delta_{w^i_0},
    \end{align}
    where $c^i_0 \in \R$ and $w^i_0  = \mathrm{sign}(\eta_0(w_0)) \frac{z_0}{|z_0|}$ with $z_0 =\sum_{i \in J^k} p_0^i \pi_i x^i$ for some binary vector $\pi$ and $J^k \subset I$.
    \item There exists $M \in \N$ such that all the solutions to \ref{eq:training} are of the form 
    \begin{align}
        \mu_{\lambda,\zeta} = \sum_{i=1}^M c_{\lambda,\zeta}^i \delta_{w_{\lambda,\zeta}^i},
    \end{align}
    where $c_{\lambda,\zeta}^i \in \R$ and $w_{\lambda,\zeta}^i  = \mathrm{sign}(\eta_{\lambda,\zeta}(w^i_{\lambda,\zeta})) \frac{z_{\lambda,\zeta}}{\|z_{\lambda,\zeta}\|}$ with $z_{\lambda,\zeta} =\sum_{i \in J^k} p_{\lambda,\zeta}^i \pi_i x^i$ for some binary vector $\pi$ and $J^k \subset I$.
\end{itemize}
   
\end{theorem}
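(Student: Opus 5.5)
We fix one dual certificate and use it for every minimizer at once. Take a solution $p_0$ of \ref{eq:dual-interp}; it exists because $Y=\R^n$ is finite-dimensional and strong duality holds. Set $\eta_0=K_*p_0$. Strong duality together with the optimality conditions \eqref{oc_hard} shows that $\eta_0\in\partial\|\mu_0\|_{\mathrm{TV}}$ for \emph{every} solution $\mu_0$ of \ref{eq:interpolationproblem}. The reason is that any primal--dual optimal pair satisfies the conditions, and any dual maximizer pairs with any primal minimizer. Lemma~\ref{lemma:support} then gives $\supp\subset\{w\in\mathbb S^{d-1}: |\eta_0(w)|=1\}$ for all minimizers simultaneously. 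It therefore suffices to show that the saturation set $\Sigma_0:=\{|\eta_0|=1\}$ is finite, with cardinality bounded independently of $\mu_0$.

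To bound $\Sigma_0$ we stratify the sphere. Every $w\in\mathbb S^{d-1}$ lies in exactly one stratum $R^{J^k}_\pi$: take $J^k=\{i:\langle w,x^i\rangle\neq 0\}$ and $\pi_i=1$ if and only if $\langle w,x^i\rangle>0$, with $\pi_i$ arbitrary (say $0$) off $J^k$. Hence
\begin{align}
\mathbb S^{d-1}=\bigcup_{J^k\subset I}\ \bigcup_{\pi\in\Theta^{J^k}} \big(R^{J^k}_\pi\cap\mathbb S^{d-1}\big).
\end{align}
This is a finite union, with at most $2^n\cdot 2^n$ pieces, and more sharply at most $\sum_{J}|\Theta^{J}|$ nonempty ones. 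By Lemma~\ref{unique_c.p.}, each piece contains at most one point of $\Sigma_0$, namely $\mathrm{sign}(\eta_0(w_0))\,\tilde z_0/\|\tilde z_0\|$ with $\tilde z_0=P_{L^{J^k}}z_0$ and $z_0=\sum_{i\in J^k}p_0^i\pi_i x^i$. So $\Sigma_0$ is finite, and we set $N:=\#\Sigma_0$. A measure supported on a finite set is a combination of Diracs, $\mu_0=\sum_{w\in\Sigma_0}\mu_0(\{w\})\delta_w$. Writing $c^i_0=\mu_0(\{w^i_0\})$, some of which may be zero, gives the stated form with the locations given by Lemma~\ref{unique_c.p.}. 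In the statement the vector $z_0$ should be read as its projection $\tilde z_0$; the two coincide when $k=n$.

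The soft problem is handled in the same way with $p_{\lambda,\zeta}$, the unique maximizer of \ref{eq:dual-training}. By \eqref{oc_soft} and strong duality, $\eta_{\lambda,\zeta}=K_*p_{\lambda,\zeta}$ is a certificate for every minimizer of \ref{eq:training}. Lemmas~\ref{lemma:support} and \ref{unique_c.p.soft} then yield, stratum by stratum, at most one saturation point. This gives $M:=\#\{|\eta_{\lambda,\zeta}|=1\}$, which is bounded by the same combinatorial count.

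The main obstacle is justifying that a single certificate serves all minimizers. This is the standard fact that, under strong duality (zero gap), the pair $(\mu,p)$ is optimal if and only if the optimality conditions hold, so that $\langle K_*p_0,\mu_0\rangle=\langle p_0,y_0\rangle=\|\mu_0\|_{\mathrm{TV}}$ for any primal optimizer $\mu_0$. A second point needing care is checking that the strata genuinely cover the sphere. For this we must ensure that $R^{J^k}_\pi$ does not depend on $\pi_i$ for $i\notin J^k$, so that the covering is exhausted by the finitely many index choices; the count of nonempty strata then bounds $N$ and $M$ uniformly.
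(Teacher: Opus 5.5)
Your proof is correct and follows the same route as the paper, whose proof simply combines Lemma~\ref{lemma:support} with the at-most-one-saturation-point-per-stratum results of Lemmas~\ref{unique_c.p.} and~\ref{unique_c.p.soft}. Your extra details are all handled correctly and are points the paper leaves implicit: that a single dual maximizer certifies every minimizer (so $N$ and $M$ are uniform), that the strata cover the sphere, and that $z_0$ must be read as its projection $\tilde z_0$ on lower-dimensional strata.
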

\begin{proof}
    The proof follows from Lemma \ref{unique_c.p.} and Lemma \ref{unique_c.p.soft} together with the optimality conditions for \ref{eq:interpolationproblem} and \ref{eq:training} summarized in Lemma \ref{lemma:support}.
\end{proof}

From a rough estimate one can upper bound both $N$ and $M$ (we denote by $N_{max}$ a common upper bound depending only on $n$ and $d$) in the previous theorem by
\begin{align}\label{eq:roughbound}
    N_{max} \leq
3^n.
\end{align}
This is obtained as all possible ways to choose the sign of each scalar product $\langle w,x^i\rangle$ for $i=1,\ldots,n$ ($\langle w,x^i\rangle>0$, $\langle w,x^i\rangle<0$ or $\langle w,x^i\rangle=0$ in correspondence with regions of higher codimension). We are counting the ternary activation patterns for each $i\in\{1,\ldots,n\}$.

However, finer estimates are possible, as described in the remaining part of this subsection.

\begin{Lemma}\label{lem:crudebound}
    The absolute value of the dual certificate  $\eta_0$ (resp. $\eta_{\lambda,\zeta}$) for \ref{eq:interpolationproblem} (resp. \ref{eq:training}) achieves its maximum in at most 
    \begin{align}
        \max_{0\leq k \leq n}\binom{n}{k}\Xi(n-k,d-k)
    \end{align}
    points, where $\Xi(n-k,d-k)$ denotes the number of non-empty regions induced in $\R^{d-k}$ by an arrangement of $n-k$ hyperplanes and we use the convention that   $\Xi(n-k,d-k) = 0$ when $d-k < 0$.
\end{Lemma}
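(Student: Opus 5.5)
We would prove the bound by counting, stratum by stratum, the points where $|\eta_0|$ (resp. $|\eta_{\lambda,\zeta}|$) equals one. When the maximum of $|\eta|$ is strictly below one there is nothing to count. The relevant points are those where the maximum equals one, which is the case needed for Lemma \ref{lemma:support}. The only tool is Lemma \ref{unique_c.p.} (resp. Lemma \ref{unique_c.p.soft}): on each nonempty stratum $R^{J^k}_\pi\cap\mathbb S^{d-1}$ there is at most one point with $|\eta|=1$. The arguments for $\eta_0$ and $\eta_{\lambda,\zeta}$ are identical, since both lemmas have the same conclusion, so we treat them together.

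\textbf{Step 1: partition the sphere by the set of vanishing constraints.} Every $w\in\mathbb S^{d-1}$ lies in exactly one stratum. Set $Z(w):=\{i:\langle w,x^i\rangle=0\}$ and $J:=I\setminus Z(w)$. Then $w\in R^{J}_\pi$, where $\pi$ is determined on $J$ by the signs of $\langle w,x^i\rangle$ and is irrelevant on $Z(w)$.

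Fix a set $Z$ of size $m:=n-k$ and write $L^{J^k}=\bigcap_{z\in Z}H_z$. The strata with vanishing set $Z$ are the open cells cut out inside $L^{J^k}$ by the restricted hyperplanes $H_j\cap L^{J^k}$, for $j\in J^k$. Two index conventions need reconciling here. The exponent of $R^{J^k}_\pi$ names the $k$ nonvanishing constraints. The statement, by contrast, counts $\binom nk$ choices of a $k$-element set together with an arrangement of $n-k$ hyperplanes in $\R^{d-k}$. So we read $k$ in the statement as the number of vanishing constraints. Then $\binom nk$ counts the choices of $Z$, and $L$ has dimension at least $d-k$, with equality when the normals indexed by $Z$ are independent. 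The remaining $n-k$ hyperplanes induce an arrangement in $L$.

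\textbf{Step 2: count cells for a fixed $Z$.} Through the sign patterns on the remaining indices, the nonempty strata with vanishing set $Z$ are in bijection with the nonempty regions of the arrangement induced in $L$. If $\dim L=d-k$, their number is exactly $\Xi(n-k,d-k)$. If $d-k<0$, the independent case produces no nonzero $w$, consistent with the convention $\Xi=0$.

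The degenerate case, in which the normals in $Z$ are dependent, is the \emph{main obstacle}. There $\dim L>d-k$, but $L$ coincides with $L^{Z'}$ for a smaller independent subfamily $Z'$. We plan to handle it as follows. If some index $j\notin Z$ has $H_j\supset L$, the stratum is empty, because that constraint must be nonzero on $L$. Hence every genuinely new stratum has an independent defining set, or can be reassigned to the maximal $Z$ determining $L$. We would choose as canonical representative of each flat $L$ its full vanishing set, and then check that the count is still dominated by a term of the same form. Should this fail in general, we would fall back to assuming general position for the $x^i$, which is the regime in which $\Xi$ is the natural quantity.

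\textbf{Step 3: combine.} By Lemma \ref{unique_c.p.} each nonempty stratum contributes at most one maximizer, so the number of maximizers with vanishing set of size $k$ is at most $\binom nk\Xi(n-k,d-k)$. Summing over $k$ yields $\sum_{k}\binom nk\Xi(n-k,d-k)$.

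Lemma \ref{unique_c.p.} gives more than uniqueness inside each stratum. If $|\eta|$ equals one at $w_0\in R^{J}_\pi$, then $|\eta|<1$ on the whole closure $\overline R^{J}_\pi$ apart from $w_0$, and in particular on every lower-dimensional face of that closure. So a maximizer in a given stratum excludes maximizers in all faces of its closure. To pass from the sum to the stated maximum over $k$, we would use this face-exclusion property to argue that maximizers cannot occupy strata of several codimensions in a way that exceeds the largest single term. Making this rigorous is the most delicate combinatorial point. We would attempt it with an injection from maximizers into the top-dimensional cells of one fixed level, sending each maximizer to an adjacent cell whose closure contains it. If the injection cannot be made to work, the honest fallback is the sum bound, which is still of order at most $3^n$ and is consistent with \eqref{eq:roughbound}.
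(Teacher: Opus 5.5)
Your strategy is the paper's sketch: at most one candidate per stratum (Lemma~\ref{unique_c.p.}, resp.\ Lemma~\ref{unique_c.p.soft}), $\binom nk$ choices of vanishing set, $\Xi(n-k,d-k)$ cells on each flat, and the face-exclusion observation to pass from the enumeration to the bound. The genuine gap is that last passage. You leave it open, and the paper's sketch does not spell it out beyond the same observation.

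Enumerating strata only yields $\sum_k\binom nk\Xi(n-k,d-k)$, which is weaker than the statement, so your fallback does not prove the lemma. Face-exclusion says only that the strata carrying maximizers form an antichain in the face poset. Bounding an antichain by a single level is a Sperner-type property you do not establish. Face-exclusion also does not make your injection into full-dimensional cells work, because it constrains only closed strata with a maximizer in their \emph{interior}. Take $d=n=2$, $x^1=e_1$, $x^2=e_2$, $p=(1,-1)$; this is a dual certificate for $\mu_0=\delta_{e_1}-\delta_{e_2}$ with $y_0=(1,-1)$. Then $\eta=\pm1$ exactly at $e_1$ and $e_2$, both of which lie in $\overline{R_{(1,1)}}$. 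Exclusion gives no control over such collisions.

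The missing ingredient is a sign-refined uniqueness on closed full-dimensional cones.
\begin{itemize}
\item On the convex cone $\overline{R_\pi}$ one has $\eta=\langle z_\pi,\cdot\rangle$. Suppose $w_1\neq w_2$ in $\overline{R_\pi}\cap\mathbb S^{d-1}$ had $\eta(w_1)=\eta(w_2)=s\in\{\pm1\}$. Then $w_1+w_2\neq0$, since otherwise $\eta(w_2)=-s$. Hence $|\eta((w_1+w_2)/\|w_1+w_2\|)|=2/\|w_1+w_2\|>1$, which contradicts dual feasibility.
\item A maximizer in an open region $R_\pi$ lies in no other closed region. By Lemma~\ref{unique_c.p.} with $J^k=I$, it is the only point of $\overline{R_\pi}\cap\mathbb S^{d-1}$ where $|\eta|=1$.
\item A maximizer on some $H_j$ lies in the closures of at least two regions, one with $\pi_j=1$ and one with $\pi_j=0$.
\end{itemize}
Write $A$ for the maximizers lying in open regions and $B^{\pm}$ for those lying on hyperplanes with $\eta=\pm1$. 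Fix a sign $s$. The three facts above show that the sets $\{\pi\in\Theta: w\in\overline{R_\pi}\}$, for $w\in A\cup B^s$, are pairwise disjoint. Hence $|A|+2|B^s|\le|\Theta|$. Summing over $s=\pm1$ gives at most $|\Theta|$ maximizers, which is the $k=0$ term and so at most the maximum in the statement. The same argument applies to $\eta_{\lambda,\zeta}$, since it uses only piecewise linearity and $\|\eta\|_\infty\le1$. It also removes your Step~2 concern about dependent normals, because only nonempty full-dimensional regions are counted.
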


\begin{proof}
  The proof of the statement is the same for the interpolation problem \ref{eq:interpolationproblem} and for the training problem \ref{eq:training}. The proof is based on the observation that if $|\eta|$ is maximized at some point in a given stratum $R^{J^k}_\pi$ for $J^k \subset I$, then it cannot be maximized at any other point in lower dimensional strata $\overline R_\pi^{J^{k-1}}$ for $J^{k-1} \subset J^k$. Moreover, distinct strata correspond to disjoint ternary patterns, so maximizers are counted by enumerating all the strata. In particular, we have $\binom{n}{k}$ ways of choosing $k$'s $x^i$ such that $\langle x^i,w\rangle = 0$. This intersection of planes determines a set of dimension $\R^{d-k}$. In such a set, we are allowed to choose the intersection of $n-k$ hyperplanes that leads to the factor $\Xi(n-k,d-k)$.  
\end{proof}

\begin{remark}\label{rem:betterbound}
The previous lemma gives a sharper estimate with respect to the rough bound in \eqref{eq:roughbound}. Indeed, since $\Xi(n-k,d-k) \leq 2^{n-k}$, then by the binomial theorem we obtain that 
\begin{align}
        \max_{0\leq k \leq n}\binom{n}{k}\Xi(n-k,d-k) \leq \max_{0\leq k \leq n}\binom{n}{k} 2^{n-k}  \leq \sum_{k=0}^n \binom{n}{k} 2^{n-k} = 3^n 
\end{align}
Therefore, if $n\leqslant d$ the best bound we can get is 
\begin{align}
    \max_{0\leq k \leq n}\binom{n}{k} 2^{n-k}
\end{align}
that can be estimated grossly by $3^n$. Even if slightly more precise estimates can be obtained, we will not enter into these details here. 
Instead, if $n>d$ it holds that 
\begin{align}
    \Xi(n-k,d-k) \leq O((n-k)^{d-k}).
\end{align}
Hence, taking into account that  $\Xi(n-k,d-k) = 0$ when $d-k < 0$, it holds that
\begin{align}
 \max_{0\leq k \leq n}& \binom{n}{k}\Xi(n-k,d-k)  \leq  \max_{0\leq k \leq n}\binom{n}{k} O((n-k)^{d-k}) =\max_{0\leq k \leq n}\binom{n}{n-k} O(k^{d-(n-k)})  \\
 & \leq \max_{0\leq k \leq n}\binom{n}{n-k} O(n^{d-(n-k)}) \leq \frac{n^{n-k}}{(n-k)!} O(n^{d-(n-k)}) \leq O(n^{d}).
\end{align}
Note that this bound, in the case $n>d$, is actually equal to $O(\Xi(n,d))$, that is, proportional to the number of dual regions. This yields a sharper estimate than the one provided in \cite{dedios2020sparsity}.

\end{remark}

    \subsection{Linear independence and uniqueness of the minimizers} \label{uniq}
In the previous subsection we proved that both problems \ref{eq:interpolationproblem} and \ref{eq:training} admit sparse minimizers and we provided an upper bound on the number of Dirac deltas in any such representation. We now address uniqueness. As common in the available literature on TV-regularized optimization problems in the space of measures, uniqueness follows from a combination of (i) a localization property of the dual certificate and (ii) a linear independence condition on the measurement vectors $\{K\delta_{w^i}\}$. We provide sufficient conditions ensuring (ii) in our ReLU setting. We caution that our conditions are likely not sharp and that a complete characterization lies beyond the scope of this work.

 %However, the uniqueness of such sparse minimizers has not been discussed yet. As common in the available literature on TV-regularized optimization problems in the space of measures, uniqueness is consequence of linear independence properties on the measurements $K$. In this subsection we provide conditions that aim at ensuring that. 

We begin by recalling several definitions from linear algebra.
\begin{definition}[Hadamard product]\label{Hadamard}
    The Hadamard or entry-wise product of two matrices $M$ and $N$ of the same dimension is defined as $M \odot N:=\left(M_{i j} \cdot N_{i j}\right)$. 
\end{definition}
We will also need the definition of \emph{transversal} given for example in \cite{brualdi1991combinatorial}.
\begin{definition}[Transversal]\label{Transversal}
   A transversal of an $m\times n$ matrix is a set of $m$ entries, one from each row and no two from the same column. 
\end{definition}

We also recall that the determinant of 
a square matrix $M=\left(m_{i j}\right) \in \mathbb{R}^{n \times n}$ can be computed as 
\begin{align}\label{eq:determinant}
\operatorname{det}(M):=\sum_{ p\in \mathcal{P}_n} \left(\operatorname{sign}(p) \prod_{i=1}^n m_{i, p(i)}\right),
\end{align}

where the sum is over all permutations $p\in \mathcal{P}_n$ of $n$ elements.
\begin{definition}[Permanent]\label{permanent}
    Given a square matrix $M=\left(m_{i j}\right) \in \mathbb{R}^{n \times n}$, the permanent of $M$ is defined as
$$
\operatorname{perm}(M):=\sum_{ p\in \mathcal{P}_n} \prod_{i=1}^n m_{i, p(i)}.
$$
\end{definition}

Let $\mu_0=\sum_{i=1}^N c_0^i \delta_{w_0^i}$ with $c_0^i \in \R$ and $w_0^i \in R_{\pi^i}$ for $i=1,\ldots,N$. Let $w_0:=(w_0^1,\ldots,w_0^N)$. We define the $N\times n$ evaluation matrix 
    \begin{equation}\label{Kmat}
    \mathbf K(w_0):=\left[\begin{array}{cccc}
\sigma(\langle w_0^1, x^1\rangle)  & \cdots & \sigma(\langle w_0^1, x^n\rangle)\\
\vdots  & \cdots & \vdots \\
\vdots  & \ddots & \vdots \\
\sigma(\langle w_0^N, x^1\rangle) & \cdots & \sigma(\langle w_0^N, x^n\rangle)
\end{array}\right]= \left[\begin{array}{cccc}
\pi_1^1\langle w_0^1, x^1\rangle & \cdots & \pi_n^1\langle w_0^1, x^n\rangle\\
\vdots  & \cdots & \vdots \\
\vdots& \ddots & \vdots \\
\pi^N_1\langle w_0^N, x^1\rangle & \cdots & \pi^N_n \langle w_0^N, x^n\rangle
\end{array}\right],
     \end{equation}
     where $\pi_j^i := \mathbf 1_{\langle w_0^i,x^j\rangle>0}$ so that 
$\sigma(\langle w_0^i,x^j\rangle)=\pi_j^i\langle w_0^i,x^j\rangle$. Its $i$-th row is exactly the measurement vector $(K\delta_{w_0^i})^\top \in \mathbb R^{1\times n}$.
Moreover, if we denote
    \begin{equation}
    \Pi:= \left[\begin{array}{cccc}
\pi_1^1\ & \cdots & \pi_n^1 \\
\vdots  & \cdots & \vdots \\
\vdots& \ddots & \vdots \\
\pi^N_1& \cdots & \pi^N_n
\end{array}\right],
    X:=\left[\begin{array}{cccc}
x^1_1  & \cdots & x^n_1\\
\vdots  & \cdots & \vdots \\
\vdots  & \ddots & \vdots \\
x^1_d & \cdots & x^n_d
\end{array}\right],  W:=\left[\begin{array}{cccc}
w^1_{0,1}  & \cdots & w^N_{0,1}\\
\vdots  & \cdots & \vdots \\
\vdots& \ddots & \vdots \\
w^1_{0,d} & \cdots & w^N_{0,d}
\end{array}\right],
     \end{equation}
 where $w_0^i = (w_{0,1}^i, \ldots, w_{0,d}^i)$ and  $x^i = (x_1^i, \ldots, x_d^i)$ then we can rewrite $
    \mathbf K(w_0)= \Pi\odot W^{\top} X.$
   In the following lemma we will prove that $\{K \delta_{w_0^i}\}_{i=1}^N$ are linearly independent under certain assumptions on $\Pi$ and supposing that $N \leq n$. Note that such condition is necessary, since otherwise the vectors $\{K\delta_{w_0^i}\}_{i=1}^N$ cannot be linearly independent.

    \begin{Lemma}\label{lem:lin_ind_meas}
        Let $\mu_0=\sum_{i=1}^N c_0^i \delta_{w_0^i}$ with $c_0^i \in \R$ and $w_0^i \in R_{\pi^i}$, $N \leq n$. Suppose that the following conditions are satisfied:
        \begin{itemize}
            \item[i)] Some $N\times N$ minor of $\Pi$, called $\Pi_N$, has full rank;
            \item[ii)] $\operatorname{perm}(\Pi_N)=|\det(\Pi_N)|$.

       % \item[ii)] $\operatorname{perm}(\Pi_N)=|\operatorname{det}(\Pi_N)|$. 
            %\item $\left\{w_0^i\right\}_{i=1}^N$ are linearly independent and $\operatorname{Span}\left(\left\{w_0^i\right\}_{i=1}^N\right) \cap \operatorname{Span}\left(\left\{x^j\right\}_{j=1}^n\right)^{\perp}=\{0\}$;
           % \item  %for every $i=1,\ldots,N$ there exists at least one $j=1,\ldots,n$ such that $(w_0^i,x^j)>0$.
          %  there exists at least one $i=1,\ldots,N$ such that $\pi^i_j=1$ for every $j=1,\ldots,n$,
        \end{itemize}
        Then, the vectors $\{K \delta_{w_0^i}\}_{i=1}^N$ are linearly independent.
    \end{Lemma}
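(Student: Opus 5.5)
The plan is to show that a suitable $N\times N$ minor of the evaluation matrix $\mathbf K(w_0)=\Pi\odot W^\top X$ has nonzero determinant. Since the rows of $\mathbf K(w_0)$ are exactly the vectors $(K\delta_{w_0^i})^\top$, this gives linear independence of $\{K\delta_{w_0^i}\}_{i=1}^N$. I will use the same $N$ columns $j_1,\dots,j_N$ that define the minor $\Pi_N$ in assumption i), and write $\mathbf K_N$ for the corresponding $N\times N$ submatrix of $\mathbf K(w_0)$. Then $\mathbf K_N=\Pi_N\odot M_N$, where $(M_N)_{i\ell}=\langle w_0^i,x^{j_\ell}\rangle$.

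First I record the sign structure of the entries, using that each $w_0^i$ lies in the \emph{open} region $R_{\pi^i}$. If $\pi^i_{j}=1$, then $\langle w_0^i,x^j\rangle>0$ strictly, so the entry is strictly positive. If $\pi^i_j=0$, then $\langle w_0^i,x^j\rangle<0$, so $\sigma(\langle w_0^i,x^j\rangle)=0$ and the entry vanishes. Hence $\mathbf K_N$ has exactly the zero pattern of $\Pi_N$, and all its nonzero entries are positive.

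Next I expand $\det(\mathbf K_N)$ via \eqref{eq:determinant}:
\begin{align}
\det(\mathbf K_N)=\sum_{p\in\mathcal P_N}\operatorname{sign}(p)\prod_{i=1}^N (\Pi_N)_{i,p(i)}\,\langle w_0^i,x^{j_{p(i)}}\rangle .
\end{align}
A term is nonzero exactly when the transversal $\{(i,p(i))\}$ of $\Pi_N$ consists only of ones, and in that case the product of the inner products is strictly positive. Now I use assumption ii). Since $\Pi_N$ has $0/1$ entries, $\operatorname{perm}(\Pi_N)$ counts the all-ones transversals, while $\det(\Pi_N)$ is their signed count. The equality $\operatorname{perm}(\Pi_N)=|\det(\Pi_N)|$ therefore forces all such permutations $p$ to have the same sign $\epsilon\in\{\pm1\}$. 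Assumption i), $\det\Pi_N\neq0$, guarantees that at least one such permutation exists.

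Consequently $\epsilon\det(\mathbf K_N)$ is a nonempty sum of strictly positive numbers, so $\det(\mathbf K_N)\neq 0$. Thus $\mathbf K(w_0)$ has rank $N$, and its rows $\{K\delta_{w_0^i}\}_{i=1}^N$ are linearly independent. The only delicate point is the strict positivity of the surviving inner products, which rules out cancellation. This requires the $w_0^i$ to lie in the open regions $R_{\pi^i}$ rather than on their boundaries, which is exactly the hypothesis of the lemma. With that in place, the rest is the elementary translation of the permanent/determinant equality into the statement that all admissible permutations share one sign.
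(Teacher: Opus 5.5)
Your proof is correct and follows the same route as the paper: restrict to the $N$ columns defining $\Pi_N$, expand $\det\big(\Pi_N\odot (W^\top X)^N\big)$, use $\operatorname{perm}(\Pi_N)=|\det(\Pi_N)|$ to conclude that all all-ones transversals have the same sign, and rule out cancellation using the strictly positive surviving inner products together with the transversal guaranteed by i). As a minor remark, $\pi^i_j$ is defined as $\mathbf 1_{\langle w_0^i,x^j\rangle>0}$, so the inner products at the ones of $\Pi_N$ are strictly positive by definition; the openness of the regions is therefore not what ensures this positivity.
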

  
    \begin{proof}
   By the definition of $K$ we have to prove that $\{\sigma(\langle w_0^i, x^1\rangle),\dots, \sigma(\langle w_0^i, x^n\rangle)\}_{i=1}^N$ are linearly independent. 
   %Since 
   %  \begin{align}
   %      \{\sigma(\langle w_0^i, x^1\rangle),\dots, \sigma(\langle w_0^i, x^n\rangle)\}_{i=1}^N = \{\pi^i_1 \langle w_0^i, x^1\rangle),\dots, \pi^i_n \sigma(\langle w_0^i, x^n\rangle)\}_{i=1}^N
   %  \end{align}
%that is
%\begin{equation}
%    \left\{\begin{array}{c}
%\sum_{i=1}^N \alpha_i \sigma(\langle w_0^i, x^1\rangle)=0 \\
%\vdots \\
%\sum_{i=1}^N \alpha_i \sigma(\langle w_0^i, x^n\rangle)=0
%\end{array} \Leftrightarrow \alpha_1=\cdots=\alpha_N=0.\right.
%\end{equation}
%We are excluding the region with all coefficients $\pi^i_j=0$ for every $j=1,\ldots,n$. We can rewrite the previous system of equations in the following way:
%\begin{equation}
%    \left\{\begin{array}{c}
%\sum_{i=1}^N \alpha_i \pi^i_1 \langle w_0^i, x^1\rangle=0 \\
%\vdots \\
%\sum_{i=1}^N \alpha_i \pi^i_n \langle w_0^i, x^n\rangle=0,
%\end{array} \right.
%\end{equation}
%where each $\pi^i$ is associated with a different region (here region means closed region). 
We will equivalently prove that $\mathbf K(w_0)$ has rank $N$, that is it has an $N\times N$ minor with nonzero determinant.
Let $\Pi_N$ be an $N\times N$ minor of $\Pi$ which is nonsingular by assumption \emph{i)}, and let $(W^\top X)^N$ be the corresponding $N\times N$ minor of $W^\top X$ (same rows and columns). Set
\begin{align}
B := (W^\top X)^N,\qquad A := \Pi_N\in\{0,1\}^{N\times N}.
\end{align}
Then the corresponding minor of $\mathbf K(w_0)=\Pi\odot (W^\top X)$ is $A\odot B$ and
\begin{equation}
\det(A\odot B)=\sum_{p\in\mathcal P_N}\operatorname{sign}(p)\prod_{i=1}^N A_{i,p(i)}\,B_{i,p(i)}.
\end{equation}
By assumption \emph{i)}, there exists at least one permutation $\bar p$ such that $\prod_i A_{i,\bar p(i)}=1$ (a transversal of ones). Moreover, by assumption \emph{ii)}, or equivalently $\operatorname{perm}(A)=|\det(A)|$, all permutations $p$ with $\prod_i A_{i,p(i)}=1$ have the same parity, hence the same sign $\operatorname{sign}(p)=s\in\{\pm1\}$. For those $p$, we also have $B_{i,p(i)}=\langle w_0^i,x^{p(i)}\rangle>0$, since $A_{i,p(i)}=1$ means $\langle w_0^i,x^{p(i)}\rangle>0$ in the associated dual region. Therefore, every nonzero term in the expansion of $\det(A\odot B)$ has the same sign $s$ and is strictly positive in magnitude, and in particular the term corresponding to $\bar p$ is nonzero due to the fact that $\prod_i A_{i,\bar p(i)}=1$. Hence, $\det(A\odot B)\neq 0$, implying that $\mathbf K(w_0)$ has rank $N$.
\end{proof}

\begin{remark}
Note that in the proof we are using that the second assumption, that is $\operatorname{perm}(\Pi_N)=|\det(\Pi_N)|$ for $0$-$1$ matrices, is equivalent to all transversals of $\Pi_N$, i.e. all permutations $p$ with $\prod_{i=1}^N (\Pi_N)_{i,p(i)}=1$, having the same parity. 
The previous lemma provides easy scenarios where the linear independence of  $\{K \delta_{w_0^i}\}_{i=1}^N$ is ensured. For example if $\Pi_N$ is a permutation matrix, then $i)$ and $ii)$ are automatically verified. To the best of our knowledge, these conditions have not been explored in the literature. Extending this line of results is left for future research.
\end{remark}

We are now ready to examine uniqueness for the sparse solutions to \ref{eq:interpolationproblem} and \ref{eq:training}. We first treat the interpolation problem  \ref{eq:interpolationproblem}, since the analysis for the training problem \ref{eq:training} follows the same strategy. 

Let $\mu_0=\sum_{i=1}^N \sigma^i c_0^i\,\delta_{w_0^i}$ with $c_0^i>0$, $\sigma^i\in\{-1,+1\}$ and $w_0^i\in \mathbb{S}^{d-1}$ satisfy the hard constraint $K\mu_0=y_0$ of \ref{eq:interpolationproblem}. 
 If $\mu_0$ is a minimizer, the dual certificate $\eta_0$ satisfies $\eta_0(w_0^i)=\sigma^i$ by Lemma \ref{lemma:support}. To conclude uniqueness, we require that $|\eta_0|$ attains its maximum $1$ only at the atoms $\{w_0^i\}_{i=1}^N$ and that the measurements $\{K\delta_{w_0^i}\}_{i=1}^N$ are linearly independent (ensured by $(i),(ii)$ in Lemma \ref{lem:lin_ind_meas}).

\begin{theorem}\label{thm:uniqueness_hard_sol}
     Let $\mu_0=\sum_{i=1}^N c_0^i \sigma^i \delta_{w_0^i}$ be such that $y_0=K \mu_0$, with $c_0^i>0$, $\sigma^i \in \{-1,1\}$ and $w_0^i \in \overline{R}_{\pi^i}$ for all $i=1, \ldots, N$. Let $\eta_0$ be the dual certificate for $\mu_0$, and suppose that $\{K \delta_{w_0^i}\}_{i=1}^N$ are linearly independent and 
     \begin{itemize}
    \item[$(\mathrm{LC})$] $|\eta_0(w)| < 1$ for all $w \in \mathbb{S}^{d-1} \setminus \{w_0^1, \ldots w_0^N\}$.
\end{itemize}
Then, $\mu_0$ is the unique solution to \ref{eq:interpolationproblem}.
\end{theorem}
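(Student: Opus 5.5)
The plan is the standard certificate argument: optimality of $\mu_0$ from the dual certificate, support localization of any other minimizer via (LC), and then linear independence to pin down the coefficients.

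\emph{Step 1: $\mu_0$ is a minimizer.} Since $\eta_0=K_*p_0$ is a dual certificate for $\mu_0$, we have $\eta_0\in\partial\|\mu_0\|_{\mathrm{TV}}$. Together with $K\mu_0=y_0$, the optimality conditions \eqref{oc_hard} and Lemma~\ref{lemma:support} give that $\mu_0$ solves \ref{eq:interpolationproblem}. Moreover, $p_0$ is a solution to \ref{eq:dual-interp} with $\langle p_0,y_0\rangle=\|\mu_0\|_{\mathrm{TV}}$, by strong duality or directly from $\int\eta_0\,d\mu_0=\|\mu_0\|_{\mathrm{TV}}$.

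\emph{Step 2: any other minimizer is supported on the atoms.} Let $\tilde\mu$ be another solution. Then $K\tilde\mu=y_0$ and $\|\tilde\mu\|_{\mathrm{TV}}=\|\mu_0\|_{\mathrm{TV}}$. Compute
\begin{align}
\int_{\mathbb{S}^{d-1}}\eta_0\,d\tilde\mu=\langle p_0,K\tilde\mu\rangle=\langle p_0,y_0\rangle=\|\mu_0\|_{\mathrm{TV}}=\|\tilde\mu\|_{\mathrm{TV}}.
\end{align}
Hence $\eta_0\in\partial\|\tilde\mu\|_{\mathrm{TV}}$, so the same $\eta_0$ certifies $\tilde\mu$. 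By Lemma~\ref{lemma:support}, $\operatorname{Supp}(\tilde\mu)\subset\{|\eta_0|=1\}$, and by (LC) this set is contained in $\{w_0^1,\ldots,w_0^N\}$. If needed, I would justify the support inclusion directly: $\int(1-\sigma_{\tilde\mu}\eta_0)\,d|\tilde\mu|=0$ with a nonnegative continuous integrand forces $|\tilde\mu|$ to vanish where $|\eta_0|<1$. Therefore $\tilde\mu=\sum_{i=1}^N\tilde c^i\delta_{w_0^i}$ for some $\tilde c^i\in\mathbb{R}$.

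\emph{Step 3: the coefficients coincide.} From $K\tilde\mu=K\mu_0=y_0$ we get
\begin{align}
\sum_{i=1}^N(\tilde c^i-c_0^i\sigma^i)\,K\delta_{w_0^i}=0.
\end{align}
Linear independence of $\{K\delta_{w_0^i}\}_{i=1}^N$ then forces $\tilde c^i=c_0^i\sigma^i$ for every $i$, so $\tilde\mu=\mu_0$.

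The only delicate point is Step 2. It requires that the single fixed certificate $\eta_0$ (the one associated with $\mu_0$) also certifies every other minimizer, rather than a certificate depending on $\tilde\mu$. This is handled by the computation above, which uses only that $\eta_0=K_*p_0$ is in the range of $K_*$, that $\|\eta_0\|_\infty\le1$, and that $\tilde\mu$ and $\mu_0$ have equal norm and equal measurements. The hypothesis $w_0^i\in\overline R_{\pi^i}$ is not needed beyond guaranteeing that the atoms are well defined; everything else is routine.
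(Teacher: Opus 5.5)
Your argument is correct and is the standard certificate argument that the paper delegates to \cite[Lemma 4.5]{DelGrande}: $\eta_0$ certifies optimality of $\mu_0$, the same $\eta_0=K_*p_0$ certifies every other minimizer, (LC) localizes that minimizer's support on the atoms, and linear independence identifies the coefficients. One small fix in your fallback justification: the integrand $1-\sigma_{\tilde\mu}\eta_0$ is only Borel, not continuous, since the polar density $\sigma_{\tilde\mu}$ need not be continuous; use instead the continuous lower bound $1-|\eta_0|\geq 0$, whose integral against $|\tilde\mu|$ vanishes, together with the openness of $\{|\eta_0|<1\}$.
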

\begin{proof}
    The proof follows directly by applying  \cite[Lemma 4.5]{DelGrande}.
\end{proof}
A verbatim statement holds for the training problem when the dual certificate for \ref{eq:training} plays the role of $\eta_0$.

\begin{theorem}\label{thm:uniqueness_soft}
     Let $\mu_{\lambda,\zeta}=\sum_{i=1}^N c_{\lambda,\zeta}^i \sigma^i \delta_{w^i_{\lambda,\zeta}}$, with $c_{\lambda,\zeta}^i >0$, $\sigma^i \in \{-1,1\}$ and $w^i_{\lambda,\zeta} \in \overline{R}_{\pi^i}$ for all $i=1, \ldots, N$. Let $\eta_{\lambda,\zeta}$ be the dual certificate for $\mu_{\lambda,\zeta}$, and suppose that $\{K \delta_{w^i_{\lambda,\zeta}}\}_{i=1}^N$ are linearly independent and 
     \begin{itemize}
    \item[$(\widetilde{\mathrm{LC}})$] $|\eta_{\lambda,\zeta}(w)| < 1$ for all $w \in \mathbb{S}^{d-1} \setminus \{w^1_{\lambda,\zeta}, \ldots w^N_{\lambda,\zeta}\}$.
\end{itemize}
Then, $\mu_{\lambda,\zeta}$ is the unique solution to \ref{eq:training}.
\end{theorem}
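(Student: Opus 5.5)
The plan is to mirror the uniqueness argument for the interpolation problem (Theorem~\ref{thm:uniqueness_hard_sol}), adapted to the Tikhonov setting. The key point is that the dual solution $p_{\lambda,\zeta}$ of \ref{eq:dual-training} is unique, so every minimizer of \ref{eq:training} shares the same dual certificate $\eta_{\lambda,\zeta}=K_*p_{\lambda,\zeta}$. Moreover, by the second optimality condition in \eqref{oc_soft}, every minimizer $\tilde\mu$ has the same measurement:
\[
K\tilde\mu = y_\zeta-\lambda p_{\lambda,\zeta}.
\]
(Alternatively, the quadratic fidelity is strictly convex in $K\mu$, so all minimizers share $K\mu$.)

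First, let $\tilde\mu$ be any solution of \ref{eq:training}. By strong duality and \eqref{oc_soft}, $\eta_{\lambda,\zeta}\in\partial\|\tilde\mu\|_{\rm TV}$. Lemma~\ref{lemma:support} then gives ${\rm Supp}(\tilde\mu)\subset\{|\eta_{\lambda,\zeta}|=1\}$, and by $(\widetilde{\mathrm{LC}})$ this set is $\{w^1_{\lambda,\zeta},\dots,w^N_{\lambda,\zeta}\}$. Hence
\[
\tilde\mu=\sum_{i=1}^N \tilde c^i\delta_{w^i_{\lambda,\zeta}}
\]
for some real coefficients $\tilde c^i$.

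Second, since $K\tilde\mu=K\mu_{\lambda,\zeta}$, we obtain
\[
\sum_{i=1}^N(\tilde c^i-c^i_{\lambda,\zeta}\sigma^i)\,K\delta_{w^i_{\lambda,\zeta}}=0.
\]
Linear independence of $\{K\delta_{w^i_{\lambda,\zeta}}\}_{i=1}^N$ forces $\tilde c^i=c^i_{\lambda,\zeta}\sigma^i$ for every $i$, so $\tilde\mu=\mu_{\lambda,\zeta}$.

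The only delicate step is justifying that all minimizers share the same $K\mu$ and the same certificate. For this I would invoke uniqueness of the Hilbert projection defining $p_{\lambda,\zeta}$, as noted after \eqref{oc_soft}. The hypothesis that $\eta_{\lambda,\zeta}$ is the dual certificate for $\mu_{\lambda,\zeta}$ ensures that $\mu_{\lambda,\zeta}$ is itself a minimizer, so the set of minimizers is nonempty. Alternatively, the whole argument is exactly the abstract statement \cite[Lemma 4.5]{DelGrande} applied to the soft problem, which could simply be cited.
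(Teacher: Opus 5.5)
Your argument is correct and follows essentially the paper's route. The paper simply invokes \cite[Lemma 4.5]{DelGrande}, stated for the interpolation problem and asserted to carry over verbatim to \ref{eq:training}. You unpack that lemma: support localization via $(\widetilde{\mathrm{LC}})$, then linear independence of $\{K\delta_{w^i_{\lambda,\zeta}}\}_{i=1}^N$. You also make explicit the one ingredient specific to the Tikhonov problem, which the paper leaves implicit: uniqueness of $p_{\lambda,\zeta}$ forces every minimizer $\tilde\mu$ to be certified by the same $\eta_{\lambda,\zeta}$ and to satisfy $K\tilde\mu = y_\zeta-\lambda p_{\lambda,\zeta}$.
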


\begin{remark}
  The localization condition (LC), together with the optimality conditions ensures that the dual certificate is maximized only at the Dirac deltas locations $\{w_0^i\}_{i=1}^N$ (and similarly for $\eta_{\lambda,\zeta}$). By Lemmas~\ref{unique_c.p.}--\ref{unique_c.p.soft}, for each dual region $R_{\pi}^{J^k}\cap\mathbb S^{d-1}$ the function $|\eta|$ has at most one point where it can possibly attain its maximum (when the region is nonempty). In particular, in the interior of a region there cannot be two distinct maximizers. Consequently, to check the localization condition it is not necessary to control $|\eta|$ everywhere on $\mathbb S^{d-1}$ but it suffices to inspect the finite collection of \emph{candidate maximizers} and verify that $|\eta|=1$ only on the support of the sparse measure, while all the remaining candidates satisfy $|\eta|<1$. 
The size of this candidate set is finite and is bounded as in Lemma~\ref{lem:crudebound} and Remark~\ref{rem:betterbound}.
\end{remark}

\section{Sparse stability for noisy labels under regularization}\label{sec:stability}

%In this section, we aim at showing that for small regularization parameter and in a low noise regime, we expect all the solutions to \ref{eq:training} to have the same number of Dirac deltas as the solution to \ref{eq:interpolationproblem}. Moreover, their locations and weights are continuous perturbation of the locations and the weights of the solution to   \ref{eq:interpolationproblem}. 
%These type of results are called \emph{exact support recovery} \cite{Peyre}.
In this section, we aim to show that for sufficiently small regularization parameter $\lambda$
and in a low noise regime, every solution to \ref{eq:training} has the same number of Dirac
deltas as the (unique) solution to \ref{eq:interpolationproblem}. Moreover, the locations and
weights are continuous perturbations of those of $\mu_0$ as $(\lambda,\zeta)\to(0,0)$ within the
admissible set $N_{\alpha,\lambda_0}$ defined below. Such results are typically referred to as
\emph{exact support recovery} \cite{Peyre}.

We consider the following set of admissible parameters/noise levels for $\lambda_0>0$ and $\alpha>0$:
\begin{equation}\label{eq:noise_set}
N_{\alpha,\lambda_0}
=\left\{(\lambda,\zeta)\in \mathbb{R}_{+}\times\mathbb{R}^n:
0<\lambda\leq \lambda_0 \ \text{ and }\ \|\zeta\|\leq \alpha\lambda\right\}.
\end{equation}
This set was introduced in \cite{Peyre} and describes the regime in which one expects exact
support recovery.
Let us also define the extended support as introduced in \cite{Peyre}.
\begin{definition}[Extended support \cite{Peyre}]
Let $\mu_0 \in M(\mathbb{S}^{d-1})$ be such that $y_0=K \mu_0$ and let $\eta_0 \in C(\mathbb{S}^{d-1})$ denote the minimal-norm dual certificate associated with \ref{eq:interpolationproblem}. The extended support of the measure $\mu_0$ is defined as 
\begin{equation}
\operatorname{Ext}(\mu_0):=\left\{w \in \mathbb{S}^{d-1} : |\eta_0(w)|= 1\right\}.
\end{equation}
\end{definition}
Note that $\operatorname{Supp}(\mu_0)\subset \operatorname{Ext}(\mu_0)$ by Lemma~\ref{lemma:support}. Analogously, for $\mu_{\lambda,\zeta}$ with dual certificate $\eta_{\lambda,\zeta}$, set $\operatorname{Ext}( \mu_{\lambda,\zeta}):=\{w \in \mathbb{S}^{d-1} : |\eta_{\lambda,\zeta}(w)|= 1\}$. 
We now recall a fundamental result, \cite[Lemma $1$]{Peyre}, regarding the support of solutions to \ref{eq:training} in the low noise regime and for small regularization parameters.
 \begin{Lemma}\label{le:local_support}
     Let $p_0$ be the minimal norm solution to \ref{eq:dual-interp}, $p_{\lambda,\zeta}$ the solution to \ref{eq:dual-training} and $\mu_{\lambda,\zeta} \in M(\mathbb{S}^{d-1})$ any solution to \ref{eq:training}. Given $\varepsilon, \delta>0$, there exist $\alpha>0, \lambda_0>0$ such that, for all $(\lambda, \zeta) \in N_{\alpha, \lambda_0}$, we have \begin{equation}\label{eq:supp_noise_meas}
        \|p_{\lambda,\zeta}-p_0\|\leqslant \delta \quad \text{ and } \quad  \operatorname{supp} \mu_{\lambda,\zeta} \subset \operatorname{Ext}^{\varepsilon}\left(\mu_0\right),
     \end{equation}
     where $\operatorname{Ext}^{\varepsilon}\left(\mu_0\right):=\bigcup_{w \in \operatorname{Ext}\left(\mu_0\right)} B_{\varepsilon}(w)$.
 \end{Lemma}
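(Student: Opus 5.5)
The proof splits into two parts: convergence of the dual variables, and localization of the support of $\mu_{\lambda,\zeta}$ near $\operatorname{Ext}(\mu_0)$. The second part reduces to the first by the uniform estimate $\|\eta-\eta'\|_\infty\le C\|p-p'\|_1$ from the proof of Proposition~\ref{prop:convdual}.

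\textbf{Step 1: quantitative stability of the dual solutions.} We control $\|p_{\lambda,\zeta}-p_{\lambda,0}\|$ and $\|p_{\lambda,0}-p_0\|$ separately. Let $\mathcal C=\{p\in\R^n:\|K_*p\|_\infty\le1\}$. As recalled in Section~\ref{sec:tvreg}, $p_{\lambda,\zeta}$ is the Hilbert projection of $y_\zeta/\lambda$ onto the closed convex set $\mathcal C$. Since projections are $1$-Lipschitz,
\begin{align}
\|p_{\lambda,\zeta}-p_{\lambda,0}\|\le \frac{\|\zeta\|}{\lambda}\le\alpha .
\end{align}
By Proposition~\ref{prop:convdual} and its proof, which uses \cite[Proposition 1]{Peyre}, $p_{\lambda,0}\to p_0$ as $\lambda\to0$. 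We therefore pick $\alpha\le\delta/2$ and then $\lambda_0$ so small that $\|p_{\lambda,0}-p_0\|\le\delta/2$ for all $\lambda\le\lambda_0$. This gives the first inclusion in \eqref{eq:supp_noise_meas}. Enlarging nothing, we can also shrink $\delta$ freely in what follows, and the estimate from Proposition~\ref{prop:convdual} yields
\begin{align}
\|\eta_{\lambda,\zeta}-\eta_0\|_\infty\le C\sqrt n\,\|p_{\lambda,\zeta}-p_0\| .
\end{align}

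\textbf{Step 2: support localization.} We use compactness. The set $S_\varepsilon:=\mathbb S^{d-1}\setminus \operatorname{Ext}^\varepsilon(\mu_0)$ is compact, and $|\eta_0|<1$ on it because $\operatorname{Ext}(\mu_0)$ is exactly the set where $|\eta_0|=1$. By continuity, $m:=\max_{S_\varepsilon}|\eta_0|<1$. (If $S_\varepsilon$ is empty there is nothing to prove.) Reduce $\delta$ so that $C\sqrt n\,\delta<1-m$. Then $|\eta_{\lambda,\zeta}|<1$ on $S_\varepsilon$ for every $(\lambda,\zeta)\in N_{\alpha,\lambda_0}$. By Lemma~\ref{lemma:support}, the support of $\mu_{\lambda,\zeta}$ lies in $\{|\eta_{\lambda,\zeta}|=1\}$, which is therefore contained in $\operatorname{Ext}^\varepsilon(\mu_0)$.

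\textbf{Main obstacle.} The delicate point is the convergence $p_{\lambda,0}\to p_0$ toward the minimal-norm dual solution, since $\mathcal D_0(y_0)$ may have many maximizers. I rely on \cite[Proposition 1]{Peyre} for it. If I were to prove it directly, I would argue as follows. Because $p_0\in\mathcal C$ and $p_{\lambda,0}$ is optimal for the regularized dual,
\begin{align}
\langle y_0,p_{\lambda,0}\rangle-\tfrac\lambda2\|p_{\lambda,0}\|^2\ge\langle y_0,p_0\rangle-\tfrac\lambda2\|p_0\|^2 .
\end{align}
Since $p_0$ is optimal for $\mathcal D_0(y_0)$, $\langle y_0,p_{\lambda,0}\rangle\le\langle y_0,p_0\rangle$, so the inequality forces $\|p_{\lambda,0}\|\le\|p_0\|$. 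The family is thus bounded. Any cluster point lies in the closed set $\mathcal C$ and is optimal for $\mathcal D_0(y_0)$, since the inequality gives $\langle y_0,p_{\lambda,0}\rangle\ge\langle y_0,p_0\rangle-\tfrac\lambda2\|p_0\|^2\to\langle y_0,p_0\rangle$. It also has norm at most $\|p_0\|$. The minimal-norm dual solution is unique because it is the projection of $0$ onto a closed convex set, so every cluster point equals $p_0$, and the whole family converges.
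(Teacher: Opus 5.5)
Your proof is correct and follows the same route as the paper, which simply invokes \cite[Lemma 1]{Peyre} together with Proposition~\ref{prop:convdual}. That argument consists of the nonexpansiveness of the projection onto $\{p:\|K_*p\|_\infty\le 1\}$, giving $\|p_{\lambda,\zeta}-p_{\lambda,0}\|\le\alpha$; the convergence $p_{\lambda,0}\to p_0$; the uniform bound $\|K_*(p-p')\|_\infty\le C\|p-p'\|_1$; and compactness of $\mathbb S^{d-1}\setminus\operatorname{Ext}^\varepsilon(\mu_0)$ to separate $|\eta_{\lambda,\zeta}|$ from $1$ there. Your self-contained proof of $p_{\lambda,0}\to p_0$ (the bound $\|p_{\lambda,0}\|\le\|p_0\|$, optimality of cluster points, and uniqueness of the minimal-norm solution) is a valid replacement for the citation of \cite[Proposition 1]{Peyre}.
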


 \begin{proof}
     The proof can be adapted straightforwardly from  \cite[Lemma $1$]{Peyre} by taking into account Proposition \ref{prop:convdual}.  
 \end{proof}

We now prove that the solutions to \ref{eq:training} are sparse, for small enough parameters/noise levels in the set $N_{\alpha,\lambda_0}$. In particular, the number of atoms equals the cardinality of the support of the measure $\mu_0$ and their locations are uniquely determined.
Note that this statement does not imply uniqueness of the corresponding coefficients and consequently of the representation. This problem will be addressed in the next section.
%We first prove a geometric lemma that provide information on the neighboring regions.
%
%\begin{Lemma}[Neighboring regions of co-dimension zero]
%    Let $R_{\pi_1}$, $R_{\pi_2} \in \Theta$ two distinct regions  of co-dimension zero. Suppose that there exists exactly one $j \in I$ such that 
%    \begin{align}
%        \pi_1(j) \neq \pi_2(j).
%    \end{align}
%    Then $R^{J^{n-1}}_{\pi_1} \cap R^{J^{n-1}}_{\pi_2} \neq \emptyset$ where
%\begin{align}
%    J^{n-1} = I \setminus \{j\} 
%\end{align}
%Viceversa if $R^{J^{n-1}}_{\pi_1} \cap R^{J^{n-1}}_{\pi_2} \neq \emptyset$, then 
%    \begin{align}
%        \pi_1(j) \neq \pi_2(j)
%    \end{align}
%only in $j = I \setminus  J^{n-1}$.
%\end{Lemma}
%\begin{proof}
%    Suppose that there exists exactly one $j \in I$ such that $\pi_1(j) \neq \pi_2(j)$. Then pick $w$ such that $\langle w,x^j \rangle = 0$ and 
%    $\langle w,x^i \rangle > 0$  for all $i\neq j, \pi_1(i) = 1$ and $\langle w,x^i \rangle < 0$ for all $i\neq j, \pi_1(i) = 0$. Note that such $w$ exists since $R_{\pi_1}, R_{\pi_2} \in \Theta$ and thus they are non-empty. By the way in which  regions are defined it holds that $w \in \partial R_{\pi_1} \cap \partial R_{\pi_2}$. 
%\end{proof}
%
%
%\begin{Lemma}[Neighboring regions of any co-dimension]
% Let $R^{J_1}_{\pi_1}$, $R^{J_2}_{\pi_2} \in \Theta$ two distinct regions. Suppose that there exists exactly one $j \in I$ such that 
%    \begin{align}
%        \pi_1(j) \neq \pi_2(j).
%    \end{align}
%    Then $\partial R_{\pi_1} \cap \partial R_{\pi_2} \neq \emptyset$.
%\end{Lemma}

Before stating the next proposition, we introduce an auxiliary set that describes the common boundary between two different regions.
\begin{definition}
Let $\pi\in\{0,1\}^n$ and let $J^k\subset I$.
Define the compatible family of strata as
\begin{align}\label{eq:higher_reg}
\mathcal{F}_{\pi}^{J^k}
:=\left\{ R^{J}_{\bar \pi} : R^{J}_{\bar\pi}\neq\emptyset,\ J^k\subset J,\ \text{and } \bar\pi_j=\pi_j\ \forall j\in J^k\right\}.
\end{align}
\end{definition}
%We then define the following transition index set:
%\begin{align}
%    \mathcal{H}_{\pi}^{J^k}=\{ j\in I: \pi^1_j\neq \pi^2_j \text{ with }  R^{J_1}_{\pi^1}, R^{J_2}_{ \pi^2} \in \mathcal{F}_{\pi}^{J^k}\}.
%\end{align}
%
%In particular, we will use $\mathcal{H}$ to denote the index set over which the data points $x^j$ are linearly independent, that is, we assume that the linear independence of the vectors $x^j$ holds only when restricted to the indices in $\mathcal{H}$.
%

As typical in super-resolution problems, we will need to impose a further non-degeneracy condition for the dual certificate that is related with its strong convexity (resp. concavity) in the minimum (resp. maximum) points.

\begin{assumption}[Boundary non-degeneracy condition] We define the following non-degeneracy condition on points that belongs to the boundary of the dual regions:
\begin{itemize}
\item[(ND)]
 Let $\mu_0=\sum_{i=1}^N c_0^i\sigma^i\delta_{w_0^i}$, with $c_0^i>0$, $\sigma^i\in\{-1,+1\}$,
and assume $w_0^i\in R^{J^k}_{\pi}\cap\mathbb S^{d-1}$ for some $k<n$. Given $p_0$ the minimal norm solution of \ref{eq:dual-interp},
for any $R^{J}_{\bar\pi}\in\mathcal F^{J^k}_\pi$, define
\begin{equation}
z_0(J,\bar\pi):=\sum_{j\in J}p_0^j\,\bar\pi_j\,x^j,
\qquad
\tilde z_0(J,\bar\pi):=P_{J}\,z_0(J,\bar\pi).
\end{equation}
We say that $\mu_0$ satisfies (ND) if for any two distinct strata $R^{J_1}_{\pi^1},R^{J_2}_{\pi^2}\in\mathcal F^{J^k}_\pi$ such that
$\tilde z_0(J_\ell,\pi^\ell)\neq 0$, it holds that
\begin{equation}
\frac{\tilde z_0(J_1,\pi^1)}{\|\tilde z_0(J_1,\pi^1)\|}
\neq
\frac{\tilde z_0(J_2,\pi^2)}{\|\tilde z_0(J_2,\pi^2)\|}.
\end{equation}
\end{itemize}
\end{assumption}

We are now ready to state and prove the main result of this section.

\begin{proposition}\label{unique_loc}
Let $N \leq n$. Assume that $\mu_0=\sum_{i=1}^N c_0^i \sigma^i \delta_{w_0^i} $, where $c_0^i>0$, $\sigma^i \in\{-1,+1\}$ and $w_0^i \in  \overline R^{J_i}_{\pi^i}$ for all $i=1, \ldots, N$, satisfies \emph{(LC)} and \emph{(ND)}. Let $\{K\delta_{w_0^i}\}_{i=1}^N$  be linearly independent and $\mu_{\lambda,\zeta} \in M(\mathbb{S}^{d-1})$ be any solution to \ref{eq:training}. Then, there
exist $\alpha>0, \lambda_0>0$ such that, for all $(\lambda, \zeta) \in N_{\alpha, \lambda_0}$, there exists a unique collection of $w_{\lambda,\zeta}^i\in \mathbb{S}^{d-1}$ such that
\begin{align}\label{eq:rep}
\mu_{\lambda,\zeta}=\sum_{i=1}^N c_{\lambda,\zeta}^i \sigma^i\delta_{w_{\lambda,\zeta}^i},
\end{align}
where $c_{\lambda,\zeta}^i>0$  and $\eta_{\lambda,\zeta}(w_{\lambda,\zeta}^i)=\sigma^i$ for every $i=1, \ldots, N$. Moreover, as $\lambda\to 0$ with $\|\zeta\|\leq \alpha\lambda$, it holds
\begin{equation}
w^i_{\lambda,\zeta}\to w_0^i,
\qquad
c^i_{\lambda,\zeta}\to c_0^i.
\end{equation}

\end{proposition}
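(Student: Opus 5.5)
We follow the scheme of \cite[Theorem 2]{Peyre}, replacing the second-order analysis with the piecewise linear structure of Lemmas \ref{unique_c.p.}--\ref{unique_c.p.soft}.

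\textbf{Step 1: localization of the support.} By (LC), $\operatorname{Ext}(\mu_0)=\{w_0^1,\dots,w_0^N\}$. Choose $\varepsilon>0$ so small that the balls $B_\varepsilon(w_0^i)$ are pairwise disjoint. Moreover, each ball should meet only the strata in the compatible family $\mathcal F^{J_i}_{\pi^i}$, that is, strata whose closure contains $w_0^i$. This is possible because there are finitely many strata, and any stratum whose closure misses $w_0^i$ lies at positive distance from it. Then Lemma \ref{le:local_support} gives, for any $\delta>0$, parameters $\alpha,\lambda_0$ such that $\operatorname{supp}\mu_{\lambda,\zeta}\subset\bigcup_i B_\varepsilon(w_0^i)$ and $\|p_{\lambda,\zeta}-p_0\|\le\delta$ on $N_{\alpha,\lambda_0}$. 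By Lemma \ref{lemma:support}, the support also lies in $\{|\eta_{\lambda,\zeta}|=1\}$.

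\textbf{Step 2: at most one atom per ball.} Fix $i$. Inside $B_\varepsilon(w_0^i)\cap\mathbb S^{d-1}$, Lemma \ref{unique_c.p.soft} says the only candidates for $|\eta_{\lambda,\zeta}|=1$ are the points $\pm\tilde z_{\lambda,\zeta}(J,\bar\pi)/\|\tilde z_{\lambda,\zeta}(J,\bar\pi)\|$, one for each stratum $R^J_{\bar\pi}\in\mathcal F^{J_i}_{\pi^i}$. Since $\tilde z_{\lambda,\zeta}(J,\bar\pi)$ depends linearly on $p_{\lambda,\zeta}$, it converges to $\tilde z_0(J,\bar\pi)$ as $\delta\to0$. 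If a candidate from stratum $(J,\bar\pi)$ lies in $B_\varepsilon(w_0^i)$ for arbitrarily small parameters, then passing to the limit gives $\tilde z_0(J,\bar\pi)\neq0$. It also gives $\tilde z_0(J,\bar\pi)/\|\tilde z_0(J,\bar\pi)\|=\pm w_0^i$, and by closedness of the stratum closure, $w_0^i\in\overline R^J_{\bar\pi}$.

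(ND) states that distinct strata in the family have distinct normalized vectors $\tilde z_0$. So at most one stratum, the one whose normalized vector equals $w_0^i$, can produce candidates converging to $w_0^i$. For all other strata, after shrinking $\varepsilon,\delta$, the candidate stays a fixed positive distance from $w_0^i$ and is excluded from the ball. The sign is fixed as well: $\eta_{\lambda,\zeta}\to\eta_0$ uniformly by the argument of Proposition \ref{prop:convdual} applied to $p_{\lambda,\zeta}\to p_0$, and $\eta_0(w_0^i)=\sigma^i$, so $\eta_{\lambda,\zeta}=\sigma^i$ at any point of the ball where $|\eta_{\lambda,\zeta}|=1$. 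Hence $\mu_{\lambda,\zeta}\llcorner B_\varepsilon(w_0^i)$ is either zero or $c^i_{\lambda,\zeta}\sigma^i\delta_{w^i_{\lambda,\zeta}}$ with $c^i_{\lambda,\zeta}\ge0$. The sign of the coefficient follows from $\int\eta\,d\mu=\|\mu\|_{\mathrm{TV}}$. The location $w^i_{\lambda,\zeta}$ is uniquely determined, and the argument above shows $w^i_{\lambda,\zeta}\to w_0^i$.

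\textbf{Step 3: coefficients and positivity.} The optimality condition \eqref{oc_soft} gives $K\mu_{\lambda,\zeta}=y_\zeta-\lambda p_{\lambda,\zeta}$, which tends to $y_0$ on $N_{\alpha,\lambda_0}$. So $\sum_i c^i_{\lambda,\zeta}\sigma^iK\delta_{w^i_{\lambda,\zeta}}\to\sum_i c_0^i\sigma^iK\delta_{w_0^i}$. The map $w\mapsto K\delta_w$ is continuous, and the vectors $\{K\delta_{w_0^i}\}_{i=1}^N$ are linearly independent. Hence, for small parameters, the $N\times n$ matrix $\mathbf K(w_{\lambda,\zeta})$ has a left inverse that is uniformly bounded and converges to that of $\mathbf K(w_0)$. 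Solving gives $c^i_{\lambda,\zeta}\to c_0^i>0$. In particular every $c^i_{\lambda,\zeta}$ is eventually strictly positive, so all $N$ atoms are present and \eqref{eq:rep} holds.

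One gap needs care: if no atom lies in a ball, we still set $c^i_{\lambda,\zeta}=0$ and use a point $w^i_{\lambda,\zeta}$ near $w_0^i$. Such a point exists: either a candidate where $|\eta_{\lambda,\zeta}|=1$, or, if none, we take $w^i_{\lambda,\zeta}=w_0^i$. The limiting argument then forces $c^i\to c_0^i>0$, which gives a contradiction.

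\textbf{Main obstacle.} I expect the delicate step to be Step 2. We must show that, near a boundary point $w_0^i$, exactly one stratum's candidate survives, and that its limit identification is consistent with the stratum closures. This requires a careful compactness argument over the finite family $\mathcal F^{J_i}_{\pi^i}$, combined with (ND) and the convergence of the projections $P_J$ applied to $z_{\lambda,\zeta}$.
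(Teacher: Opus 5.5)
Your Steps 1 and 2 follow the paper's argument. Both localize via Lemma~\ref{le:local_support}, place at most one saturation point $\tilde z_{\lambda,\zeta}(J,\bar\pi)/\|\tilde z_{\lambda,\zeta}(J,\bar\pi)\|$ in each stratum of $\mathcal F^{J_i}_{\pi^i}$, and contradict (ND) via continuity of $p\mapsto \tilde z(p)/\|\tilde z(p)\|$. The paper phrases that last step as a $4\varepsilon$ triangle inequality rather than a limit identification. Your handling of the sign, and of $\tilde z_0\neq0$ (which follows from $\|\tilde z_{\lambda,\zeta}\|=1$ at a saturation point), is if anything more careful. Step 3 reaches positivity by a different route. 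The paper argues by contradiction along $\alpha_k,\lambda_{0,k}\to0$, using Proposition~\ref{prop:hofmann} and uniqueness of $\mu_0$ (Theorem~\ref{thm:uniqueness_hard_sol}). You instead use $K\mu_{\lambda,\zeta}=y_\zeta-\lambda p_{\lambda,\zeta}$ and a continuous left inverse of $\mathbf K(w)^\top$. This is valid and more elementary for the existence part, where you may shrink $\alpha$ and $\lambda_0$ freely.

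The gap is in the final convergence claim. It is asserted for the fixed $\alpha$ just produced, as $\lambda\to0$ with $\|\zeta\|\le\alpha\lambda$.
\begin{itemize}
\item Your $w^i_{\lambda,\zeta}\to w^i_0$ rests on $p_{\lambda,\zeta}\to p_0$. With $\alpha$ fixed, Lemma~\ref{le:local_support} only gives $\|p_{\lambda,\zeta}-p_0\|\le\delta$ on $N_{\alpha,\lambda_0}$, for the $\delta$ used to select $\alpha$.
\item Convergence to $p_0$ also needs $\|\zeta\|/\lambda\to0$. Indeed $p_{\lambda,\zeta}$ is the projection of $y_0/\lambda+\zeta/\lambda$ onto $\{\|K_*p\|_\infty\le1\}$. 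When $\zeta/\lambda\not\to0$ it can accumulate at other solutions of $\mathscr D_0(y_0)$, a set that is in general not a singleton (which is why the minimal-norm $p_0$ is singled out).
\item So $\eta_{\lambda,\zeta}\to\eta_0$ is unavailable in the stated regime. Since Step 3 takes $w_{\lambda,\zeta}\to w_0$ as input, you get neither $w^i_{\lambda,\zeta}\to w^i_0$ nor $c^i_{\lambda,\zeta}\to c^i_0$ except when $(\lambda,\|\zeta\|/\lambda)\to(0,0)$.
\end{itemize}

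The paper closes this on the primal side. Proposition~\ref{prop:hofmann} applies because $\|\zeta\|^2/\lambda\le\alpha^2\lambda\to0$, so $\mu_{\lambda,\zeta}\stackrel{*}{\rightharpoonup}\mu_0$. One atom per disjoint ball plus uniqueness of $\mu_0$ then give convergence of atoms and masses.

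You can also stay on the dual side. Every limit point $\hat p$ of $p_{\lambda,\zeta}$ solves $\mathscr D_0(y_0)$: pass to the limit in the optimality inequality of $\mathscr D_\lambda(y_\zeta)$ tested against $p_0$. Hence $\hat\eta=K_*\hat p$ is a certificate for $\mu_0$ and $\hat\eta(w_0^i)=\sigma^i$. Since $\|\hat p-p_0\|\le\delta$, your (ND) separation argument shows $w_0^i$ is the only point near $w_0^i$ where $\hat\eta=\sigma^i$. This forces $w^i_{\lambda,\zeta}\to w^i_0$, and your left-inverse step then gives $c^i_{\lambda,\zeta}\to c^i_0$.
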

\begin{proof}
Without loss of generality, assume that $\sigma^i=1$ for all $i$. The argument is local around each $w_0^i$, and the case $\sigma^i=-1$ is handled in the same way by replacing $\eta$ with $-\eta$ on the corresponding subset of indices.

By (LC), $|\eta_0(w)|<1$ for every $w\neq w_0^i$, hence for sufficiently small $\varepsilon_0>0$,
\begin{align}
\Ext(\mu_0)\cap B_{\varepsilon_0}(w_0^i)=\{w_0^i\}\qquad\text{for every }i=1,\dots,N. 
\end{align}
Moreover, by Lemma \ref{unique_c.p.},  if $w_0^i\in R^{J_i}_{\pi^i}$ then 
\begin{equation}\label{eq:w0-representation}
w_0^i
=
\frac{\tilde z_0^i}{\|\tilde z_0^i\|},
\qquad
\tilde z_0^i
:=
P_{J_i}\Big(\sum_{j\in J_i} p_0^j \pi^i_j x^{j}\Big),
\end{equation}

where $p_0$ is the dual maximizer in \ref{eq:dual-interp}.\\
Fix $i$ and let $R^{J^k}_\pi=R^{J_i^k}_{\pi^i}$ be the $J^k$-stratum such that $w_0^i\in R^{J^k}_\pi$ with $k\leq n$.
We want to prove that for $(\lambda,\zeta) \in N_{\alpha, \lambda_0}$ with $\alpha, \lambda_0$ sufficiently small,
the set $\Ext(\mu_{\lambda,\zeta})\cap B_{\varepsilon_0}(w_0^i)$
contains at most one point.
Indeed, by Lemma~\ref{le:local_support} the support of $\mu_{\lambda,\zeta}$ lies in $\bigcup_{m=1}^N B_{\varepsilon_0}(w_0^m)$ for $(\lambda,\zeta) \in N_{\alpha, \lambda_0}$ with $\alpha, \lambda_0$ sufficiently small.
Thus, showing that $\Ext(\mu_{\lambda,\zeta})\cap B_{\varepsilon_0}(w_0^i)$ has at most one point
implies that $\mu_{\lambda,\zeta}$ has at most one atom in $B_{\varepsilon_0}(w_0^i)$.
%Given $ w_0^i \in R^{J^k_i}_{\pi^i}$ in a certain $k$ substratum, we want to prove that for sufficiently small $\varepsilon$ it holds that
%$|\eta_{\lambda,\zeta}(w)| = 1$ in at most one $ w \in B_{\varepsilon}(w_0^i)$. For notational convenience, we fix an $i$ and write $w_0^i\in R^{J^k}_{\pi}$ with $k\leq n$. 

Note that with this choice of $J^k$ and $\pi$, shrinking $\varepsilon_0$ if necessary, the ball $B_{\varepsilon_0}(w_0^i)$ can intersect only strata in the compatible family $\mathcal F^{J^k}_{\pi}$ as defined in \eqref{eq:higher_reg}.
%Let $p_0$ be the minimal-norm solution to \ref{dualhard_prob_ReLU}. By choosing $\varepsilon$ small enough, we claim that 
%\begin{align}\label{eq:diffzero}
%    p_0^j \neq 0 \quad \forall j \in I \setminus J^k.
%\end{align}
%Indeed, if such $j$ exists, then for every $w \in R^{J^k \cup \{j\}}_{\bar \pi}$ with $\bar \pi(j) = 1$ it holds that 
%\begin{align}
 %   \eta_0(w) = \sum_{i \in J^k \cup \{j\}} p_0^i \pi_i \langle x_i, w\rangle = \sum_{i \in J^k} p_0^i \pi_i \langle x_i, w\rangle = \eta_0(w^i_0)  
%\end{align}
%respectively, this would contradict the LC condition by taking $\varepsilon$ small enough. 
Arguing as in Lemma \ref{lem:dualinregion}, we can write the dual certificate $\eta_{\lambda,\zeta}$ on the dual region $R^{J}_{\bar\pi}\in\mathcal{F}^{J^k}_\pi$ as
\begin{equation}
   {\eta}_{\lambda,\zeta}(w)=\sum_{j \in J}p_{\lambda,\zeta}^j \bar\pi_j \langle w, x^j\rangle,
    \qquad w\in R^{J}_{\bar\pi},
\end{equation}
where $p_{\lambda,\zeta} \in \R^n$ is the solution to the dual problem \ref{eq:dual-training}. Since $R^J_{\bar\pi}\subset L^{J}$, for $w\in R^J_{\bar\pi}\cap\mathbb S^{d-1}$ we have
\begin{equation}
\eta_{\lambda,\zeta}(w)=\langle P_J\Big(\sum_{j\in J} p_{\lambda,\zeta}^j\bar\pi_j x^j\Big),\, w\rangle
=\langle \tilde z_{\lambda,\zeta}(J,\bar\pi),w\rangle.
\end{equation}
Hence, $|\eta_{\lambda,\zeta}(w)|\leq \|\tilde z_{\lambda,\zeta}(J,\bar\pi)\|$ and equality can hold only if
$w=\pm \tilde z_{\lambda,\zeta}(J,\bar\pi)/\|\tilde z_{\lambda,\zeta}(J,\bar\pi)\|$.
At most one of these two points belongs to the fixed sign region $R^J_{\bar\pi}$. Therefore, following the computation made in the proof of Lemma \ref{unique_c.p.} for $\eta_0$, in each region $R^{J}_{\bar\pi}\cap\mathbb S^{d-1}$ with $R^{J}_{\bar\pi}\in\mathcal F^{J^k}_\pi$,
the set $\Ext(\mu_{\lambda,\zeta})$ contains at most one point, and whenever it is nonempty it must be of the form
\begin{equation}
w_{\lambda,\zeta}(J,\bar\pi)=\frac{\tilde z_{\lambda,\zeta}(J,\bar\pi)}{\|\tilde z_{\lambda,\zeta}(J,\bar\pi)\|}.
\end{equation}
 %Therefore, repeating the computation made in the proof of Lemma \ref{unique_c.p.} for $\eta_0$, we obtain that in each region $ R^{J}_{\bar \pi} \cap \mathbb{S}^{d-1}$ with $R^{J}_{\bar  \pi} \in \mathcal{F}^{J^k}_\pi$ there is \emph{at most one} point of $\Ext(\mu_{\lambda,\zeta})$, and it must be of the form
%\begin{equation}
%w_{\lambda,\zeta}(J,\bar\pi)
%=
%\frac{\tilde z_{\lambda,\zeta}(J,\bar\pi)}{\|\tilde z_{\lambda,\zeta}(J,\bar\pi)\|},
%\qquad
%\tilde z_{\lambda,\zeta}(J,\bar\pi)
%:=
%P_{J}\Big(\sum_{j\in J} p_{\lambda,\zeta}^j\,\bar\pi_j\,x^j\Big).
%\end{equation}
%It remains to prove that there cannot be more than on element in $\Ext(\tilde \mu_\lambda)$ belonging to $B_\varepsilon(w_0^i)$. This is true inside the region $R_{\pi^i}$ thanks to Proposition \ref{unique_c.p.}, but we still have to prove it when the point $w_{\lambda,\zeta}^i$ belongs to the boundary $\partial R_{\pi^i}$ (which contains every possible subregion of higher codimension).  
Now we want to prove that there exists  $\alpha,\lambda_0$  sufficiently small such that for all $(\lambda,\zeta) \in N_{\alpha,\lambda_0}$ there exists at most one point $w \in B_{\varepsilon_0}(w_0^i)$ with  $|\eta_{\lambda,\zeta}(w)|=1$. Assume by contradiction that this does not hold. Then, also recalling the first step of the proof, there exist a vanishing sequence $(\alpha_n, \lambda_{0,n})$, $(\lambda,\zeta) \in N_{\alpha_n,\lambda_{0,n}}$ and two distinct regions $R^{J_1}_{\pi_1}, R^{J_2}_{\pi_2} \in \mathcal{F}^{J^k}_\pi$ with points $w_1 \in R^{J_1}_{\pi_1}$, $w_2 \in R^{J_2}_{\pi_2}$ such that $w_1, w_2 \in \operatorname{Ext}(\mu_{\lambda,\zeta})\cap B_{\varepsilon_0}(w_0^i)$,  $w_1\neq w_2$ (note that for easing the notation we are omitting the dependence of $n$ in $(\lambda,\zeta)$ and in the selected dual regions). Since $R^{J_1}_{\pi_1}$ and $R^{J_2}_{\pi_2}$ are assumed distinct, we have that $(J_1,\pi^1)\neq (J_2, \pi^2)$.
For $\ell\in\{1,2\}$ set
\begin{equation}
z_{\lambda,\zeta}^\ell:=\sum_{j\in J_\ell} p_{\lambda,\zeta}^j\,\pi^\ell_j\,x^j,
\qquad
\tilde z_{\lambda,\zeta}^\ell:=P_{J_\ell}\, z_{\lambda,\zeta}^\ell.
\end{equation}
Applying the previous step of the proof again and taking $\alpha_n$, $\lambda_{0,n}$ small enough,  $\tilde z_{\lambda,\zeta}^\ell\neq 0$ and
\begin{equation}
w_\ell=\frac{\tilde z_{\lambda,\zeta}^\ell}{\|\tilde z_{\lambda,\zeta}^\ell\|},\qquad \ell=1,2.
\end{equation}
For $\ell\in\{1,2\}$ define the linear map
$
\tilde z^\ell(p):=P_{J_\ell}\Big(\sum_{j\in J_\ell} p^j\,\pi_j^\ell\,x^j\Big)$ and fix an arbitrary $0< \varepsilon < \varepsilon_0$. Since $\tilde z^\ell(p_0)\neq 0$, the map
$p\mapsto \tilde z^\ell(p)/\|\tilde z^\ell(p)\|$ is continuous at $p_0$.
Hence, there exists $\delta>0$ such that
\begin{equation}
\|p-p_0\|\leq \delta
\quad\Longrightarrow\quad
\left\|
\frac{\tilde z^\ell(p)}{\|\tilde z^\ell(p)\|}
-
\frac{\tilde z^\ell(p_0)}{\|\tilde z^\ell(p_0)\|}
\right\|\leq \varepsilon,
\qquad \ell=1,2.
\end{equation}
Applying Lemma~\ref{le:local_support} with this $\delta$, there exist $n$ sufficiently big such that $\|p_{\lambda,\zeta}-p_0\|\leq \delta$.
Therefore, 
\begin{equation}
\left\|
\frac{\tilde z_{\lambda,\zeta}^\ell}{\|\tilde z_{\lambda,\zeta}^\ell\|}
-
\frac{\tilde z_0^\ell}{\|\tilde z_0^\ell\|}
\right\|\leq \varepsilon,
\qquad \ell=1,2,
\end{equation}
where $\tilde z_0^\ell:=P_{J_\ell}\big(\sum_{j\in J_\ell} p_0^j\,\pi_j^\ell\,x^j\big)$.
Moreover, by again applying Lemma~\ref{le:local_support} with $n$ sufficiently big, it holds that $w_1,w_2\in B_\varepsilon(w_0^i)$ and thus $\|w_2-w_1\|\leq2\varepsilon$.

Therefore, we obtain
\begin{align*}
\left\|
\frac{\tilde z_0^1}{\|\tilde z_0^1\|}
-
\frac{\tilde z_0^2}{\|\tilde z_0^2\|}
\right\|
&\leq
\left\|
\frac{\tilde z_0^1}{\|\tilde z_0^1\|}
-
\frac{\tilde z_{\lambda,\zeta}^1}{\|\tilde z_{\lambda,\zeta}^1\|}
\right\|
+
\left\|
\frac{\tilde z_{\lambda,\zeta}^1}{\|\tilde z_{\lambda,\zeta}^1\|}
-
\frac{\tilde z_{\lambda,\zeta}^2}{\|\tilde z_{\lambda,\zeta}^2\|}
\right\|
+
\left\|
\frac{\tilde z_{\lambda,\zeta}^2}{\|\tilde z_{\lambda,\zeta}^2\|}
-
\frac{\tilde z_0^2}{\|\tilde z_0^2\|}
\right\| \\
&\leq\varepsilon + \|w_1-w_2\| + \varepsilon
\leq\varepsilon + 2\varepsilon + \varepsilon
=4\varepsilon.
\end{align*}
Since (ND) states that the normalized vectors
$\tilde z_0(J,\bar\pi)/\|\tilde z_0(J,\bar\pi)\|$ are pairwise distinct for distinct strata in $\mathcal F^{J^k}_\pi$,
there exists
\[
c:=\min_{\substack{R^{J_1}_{\pi^1},R^{J_2}_{\pi^2}\in\mathcal F^{J^k}_\pi\\ (J_1,\pi^1)\neq (J_2,\pi^2)}}
\left\|
\frac{\tilde z_0(J_1,\pi^1)}{\|\tilde z_0(J_1,\pi^1)\|}
-
\frac{\tilde z_0(J_2,\pi^2)}{\|\tilde z_0(J_2,\pi^2)\|}
\right\|>0.
\]
Choosing $\varepsilon$ such that $\varepsilon <c/4$ yields a contradiction.
 Hence, for each $i$ the following holds: for $(\lambda,\zeta) \in N_{\alpha,\lambda_0}$ with $\lambda_0, \alpha$  small enough there exists at most one point $w\in B_{\varepsilon_0}(w_0^i)$ with $|\eta_{\lambda,\zeta}(w)|=1$.

This also shows that $\mu_{\lambda,\zeta}$ admits the representation \eqref{eq:rep}
with $c_{\lambda,\zeta}^i\geq 0$ and $\eta_{\lambda,\zeta}(w_{\lambda,\zeta}^i)=1$ for all $i$. Indeed, 
By Lemma \ref{le:local_support}, the support of the measure $\mu_{\lambda,\zeta}$ is contained in $\operatorname{Ext}_{\varepsilon_0}\left(\mu_0\right)$. Moreover, optimality implies $\operatorname{supp}\mu_{\lambda,\zeta}\subset \Ext(\mu_{\lambda,\zeta})=\{w:\ |\eta_{\lambda,\zeta}(w)|=1\}$.
Hence, $\mu_{\lambda,\zeta}$ is a finite sum of Dirac masses, with at most one atom in each ball, that is
\begin{equation}\label{eq:sum_diracs_noise}
\mu_{\lambda,\zeta}=\sum_{i=1}^N c_{\lambda,\zeta}^i \delta_{w_{\lambda,\zeta}^i}.
\end{equation}
For each $i$ either $\operatorname{supp}\mu_{\lambda,\zeta}\cap B_{\varepsilon_0}(w_0^i)=\emptyset$,
or there exists a unique point $w_{\lambda,\zeta}^i\in B_{\varepsilon}(w_0^i)$ such that
\begin{equation}
\operatorname{supp}\mu_{\lambda,\zeta} \cap B_{\varepsilon_0}(w_0^i)
=\Ext(\mu_{\lambda,\zeta})\cap B_{\varepsilon_0}(w_0^i)
=\{w_{\lambda,\zeta}^i\}.
\end{equation}
In the latter case, $w_{\lambda,\zeta}^i\in \Ext(\mu_{\lambda,\zeta})$ and $\eta_{\lambda,\zeta}(w_{\lambda,\zeta}^i)=1$. In particular, we may write
\eqref{eq:sum_diracs_noise},
where $c_{\lambda,\zeta}^i\geq 0$ and $c_{\lambda,\zeta}^i>0$ if and only if $\operatorname{supp}\mu_{\lambda,\zeta}\cap B_\varepsilon(w_0^i)\neq\emptyset$.
Thus, it remains to prove that $c_{\lambda,\zeta}^i>0$ for all $i$ when $(\lambda_0,\alpha)$ is sufficiently small.

By the minimality of $\mu_{\lambda,\zeta}$ for \ref{eq:training},
\begin{equation}
\lambda \left\|\mu_{\lambda,\zeta}\right\|_{\rm TV} \leqslant \frac{1}{2}\left\|K \mu_{\lambda,\zeta}-y_0-\zeta\right\|^2+\lambda \left\|\mu_{\lambda,\zeta}\right\|_{\rm TV} \leqslant \frac{\|\zeta\|^2}{2}+\lambda \left\|{\mu}_0\right\|_{\rm TV},
\end{equation}
where in the second inequality, we used that $K \mu_0-y_0=0$. Dividing by $\lambda>0$ and using $\|\zeta\|\leq \alpha\lambda$ for $(\lambda,\zeta)\in N_{\alpha,\lambda_0}$, we obtain a uniform bound
\begin{equation}
\|\mu_{\lambda,\zeta}\|_{\rm TV}
=\sum_{i=1}^N c_{\lambda,\zeta}^i
\leq
\|\mu_0\|_{\rm TV} + \frac{\alpha^2}{2}\lambda_0,
\end{equation}
and since $\|\mu_{\lambda,\zeta}\|_{\rm TV}=\sum_{i=1}^N c_{\lambda,\zeta}^i$ is uniformly bounded, each coefficient $c_{\lambda,\zeta}^i$ is uniformly bounded on $N_{\alpha,\lambda_0}$.
Assume by contradiction that there exist some $j$ and a sequence $(\lambda_k,\zeta_k)\in N_{\alpha_k,\lambda_{0,k}}$ with $\lambda_{0,k}\to 0$ and $\alpha_k \to 0$, such that
\begin{equation}
c_{\lambda_k,\zeta_k}^j \to 0 \qquad\text{as }k\to\infty.
\end{equation}
By the compactness of the sphere and the boundedness of the coefficients, we can extract a subsequence (not relabelled) such that, for each $i$,
\begin{equation}
\delta_{w_{\lambda_k,\zeta_k}^i}\stackrel{*}{\rightharpoonup}\delta_{\hat w_0^i},
\qquad
c_{\lambda_k,\zeta_k}^i\to \hat c_0^i,
\end{equation}
with $\hat c_0^j=0$. In particular, it holds that
\begin{equation}
\mu_{\lambda_k,\zeta_k}
=\sum_{i=1}^N c_{\lambda_k,\zeta_k}^i \delta_{w_{\lambda_k,\zeta_k}^i}
\stackrel{*}{\rightharpoonup}
\sum_{i=1}^N \hat c_0^i \delta_{\hat w_0^i}
=: \hat\mu_0.
\end{equation}

On the other hand, by the stability result \cite[Theorem 3.5]{Hofmann} we know that $\mu_{\lambda_k,\zeta_k}\stackrel{*}{\rightharpoonup}\mu_0$ as $k\to\infty$. By Theorem \ref{thm:uniqueness_hard_sol}, the minimizer $\mu_0$ of \ref{eq:training} is unique, hence necessarily $\hat\mu_0=\mu_0$. Since the balls $\{B_{\varepsilon_0}(w_0^i)\}_{i=1}^N$ are pairwise disjoint and $\operatorname{supp}\mu_{\lambda_k,\zeta_k}\subset\bigcup_i B_{\varepsilon_0}(w_0^i)$ for $k$ large, the labeling of atoms is fixed by the balls.
Since $\mu_0$ is uniquely representable (i.e., both the coefficients $c_0^i$ and the points $w_0^i$ are unique), we deduce, thanks to the complementarity assumption, that 
\begin{equation}
\hat c_0^i = c_0^i>0,
\qquad
\hat w_0^i = w_0^i,
\qquad i=1,\ldots,N,
\end{equation}
which contradicts $\hat c_0^j=0$. Therefore, for $(\lambda_0,\alpha)$ sufficiently small, we have that for all $(\lambda,\zeta) \in N_{\alpha,\lambda_0}$
\begin{equation}
c_{\lambda,\zeta}^i>0 \qquad\text{for all }i=1,\ldots,N
\end{equation}
obtaining the representation \eqref{eq:rep}.

Finally, let $(\lambda_k,\zeta_k) \in N_{\alpha,\lambda_0}$ with $(\lambda_k,\zeta_k)\to(0,0)$. By \cite[Theorem~3.5]{Hofmann} we have
$\mu_{\lambda_k,\zeta_k}\stackrel{*}{\rightharpoonup}\mu_0$.
Since for every $k$ the ball $B_{\varepsilon_0}(w_0^i)$ contains exactly one atom
$w_{\lambda_k,\zeta_k}^i$ of $\mu_{\lambda_k,\zeta_k}$ with mass $c_{\lambda_k,\zeta_k}^i>0$,
and the balls are pairwise disjoint we obtain that
\begin{equation}
w_{\lambda_k,\zeta_k}^i\to w_0^i,
\qquad
c_{\lambda_k,\zeta_k}^i\to c_0^i,
\qquad i=1,\dots,N.
\end{equation}
\end{proof}

\begin{remark}
    Note that due to Theorem \ref{thm:uniqueness_hard_sol} that ensures uniqueness of solutions, the empirical measure $\mu_0$ is precisely the solution obtained by the application of the representer theorem \cite{ bartolucci2023understanding, boyer, bredies2020sparsity}. This justifies once more why the assumption $N \leq n$ is unavoidable. A similar observation can also be made regarding Theorem \ref{thm:int_rec}.
\end{remark}

\subsection{Rates of convergence and uniqueness of the coefficients}
In this section we derive quantitative rates for the convergences
\begin{align}
w_{\lambda,\zeta}^i \to w_0^i,
\qquad
c_{\lambda,\zeta}^i \to c_0^i,
\qquad i=1,\dots,N,
\end{align}
obtained in the previous section. We will show that, locally,
\begin{equation}
|c_{\lambda,\zeta}^i-c_0^i|+\|w_{\lambda,\zeta}^i-w_0^i\| \;=\; O\big(|\lambda|+\|\zeta\|\big),
\end{equation}
and in particular, along $N_{\alpha,\lambda_0}$, we get $O(\lambda)$.
%Given a collection of $w = \{w^1, \ldots, w^N\}$ where $w^i \in \mathbb{S}^{d-1}$ we write $\boldsymbol{\sigma}^i(w)=\left(\sigma\left(\left\langle w^i, x^1\right\rangle\right), \ldots, \sigma\left(\left\langle w^i, x^n\right\rangle\right)\right)\in \RR^n$, $i=1,\ldots,N$, where 
%\begin{equation}\label{pi}
%    \pi_j^i(w)=\left\{
%    \begin{aligned}
%        &1\quad \text{ if }\langle w^i,x^j\rangle>0,\\
%        &0\quad \text{ if }\langle w^i,x^j\rangle<0.
%    \end{aligned}
%    \right.
%\end{equation}
%In particular, using the previous definition \eqref{Kmat} of $\mathbf K(w_0)$, we have:
%  \begin{equation}
%(\mathbf K(w_0))^{\top}=\left[\begi%n{array}{cccc}
%\sigma(\langle w_0^1, x^1\rangle)  & \cdots & \sigma(\langle w_0^N, x^1\rangle)\\
%\vdots  & \cdots & \vdots \\
%\vdots  & \ddots & \vdots \\
%\sigma(\langle w_0^1, x^n\rangle) & \cdots & \sigma(\langle w_0^N, x^n\rangle)
%\end{array}\right]= \left[\begin{array}{cccc}
%\boldsymbol{\sigma}^1(w) & \cdots & \boldsymbol{\sigma}^N(w)\\
%\end{array}\right]\in \RR^{n\times N}.
%     \end{equation}
Note that 
\begin{equation}\label{K*mat}
\mathbf K(w_0) =   \left[\begin{array}{cccc}
K_*e^1(w_0^1)  & \cdots & K_*e^n(w_0^1)\\
\vdots  & \cdots & \vdots \\
\vdots  & \ddots & \vdots \\
K_*e^1(w_0^N) & \cdots & K_*e^n(w_0^N)
\end{array}\right]\in \RR^{N\times n},
     \end{equation}
     where $e^j\in \RR^n$ is the $j$-th vector of the canonical base of $\RR^n$. 
Given the operator $K_*' : \mathbb{R}^n \rightarrow C(\mathbb{S}^{d-1};\mathbb R^d)$ defined as  
     \begin{align}\label{eq:der_predual}
         K'_*(p^1,\ldots,p^n)(w) = \sum_{i=1}^n p^i \nabla_{\mathbb{S}^{d-1}} \sigma(\langle w, x^i\rangle),
     \end{align}
      where $\nabla_{\mathbb{S}^{d-1}}$ is the Riemannian gradient on the manifold $\mathbb{S}^{d-1}$,
     we can also define the $N(d-1)\times n$ matrix of first derivatives as
\begin{equation}\label{K'*mat}
\mathbf K'(w_0):=\left[\begin{array}{cccc}
\nabla_{\mathbb{S}^{d-1}}\sigma(\langle w_0^1, x^1\rangle)  & \cdots & \nabla_{\mathbb{S}^{d-1}}\sigma(\langle w_0^1, x^n\rangle)\\
\vdots  & \cdots & \vdots \\
\vdots  & \ddots & \vdots \\
\nabla_{\mathbb{S}^{d-1}}\sigma(\langle w_0^N, x^1\rangle) & \cdots & \nabla_{\mathbb{S}^{d-1}}\sigma(\langle w_0^N, x^n\rangle)
\end{array}\right]=  \left[\begin{array}{cccc}
K'_*e^1(w_0^1)  & \cdots & K'_*e^n(w_0^1)\\
\vdots  & \cdots & \vdots \\
\vdots  & \ddots & \vdots \\
K'_*e^1(w_0^N) & \cdots & K'_*e^n(w_0^N)
\end{array}\right],
     \end{equation}
     whose entries are tangent vectors in $\R^d$. In particular,  we note that
$K_*'p(w)\in T_w\mathbb{S}^{d-1}\subset \R^d$ for every $p\in\R^n$ and $w\in\mathbb{S}^{d-1}$. Therefore, the natural way to interpret \eqref{K'*mat} is as the linear map
\begin{equation}
\mathbf K'(w_0):\R^n \rightarrow \bigtimes_{i=1}^N T_{w_0^i}\mathbb{S}^{d-1}.
\end{equation}

\begin{theorem}[Interior Recovery]\label{thm:int_rec}
   Let $N \leq n$. Assume that $\mu_0=\sum_{i=1}^N c_0^i \sigma^i \delta_{w_0^i}$, where $c_0^i>0$, $\sigma^i\in \{-1,+1\}$, and $w_0^i \in R_{\pi^i}\cap \mathbb{S}^{d-1}$ for every $i=1, \ldots, N$, satisfies the $(LC)$. Moreover, suppose that $\{K\delta_{w_0^i}\}$ are linearly independent and
\begin{align}\label{eq:fullrank}
   \left[\begin{array}{c}
       \mathbf K(w_0) \\
        \mathbf K'(w_0) 
   \end{array}\right]
   \in \RR^{Nd\times n} \text{ has full rank } Nd.
   \end{align}
  Then, there exists $\alpha>0, \lambda_0>0$, such that, for all $(\lambda, \zeta) \in N_{\alpha, \lambda_0}$, the solution $\mu_{\lambda,\zeta}$ to \ref{eq:training} is unique and admits a unique representation of the form:
\begin{align}
\mu_{\lambda,\zeta}=\sum_{i=1}^N c_{\lambda,\zeta}^i \sigma^i\delta_{w_{\lambda,\zeta}^i},
\end{align}
where $w_{\lambda,\zeta}^i \in R_{\pi^i}\cap \mathbb{S}^{d-1}$ such that $\eta_{\lambda,\zeta} (w_{\lambda,\zeta}^i)=\sigma^i, c_{\lambda,\zeta}^i>0$. Moreover, for every $i=1, \ldots, N$ and all $(\lambda, \zeta) \in N_{\alpha, \lambda_0}$, it holds: 
\begin{equation}\label{eq:Lipschitz_rate}
    \begin{aligned}
\left|c_{\lambda,\zeta}^i-c_0^i\right|&=O(\lambda), \\
\left\|w_{\lambda,\zeta}^i-w_0^i\right\|&=O(\lambda).
    \end{aligned}
\end{equation}
\end{theorem}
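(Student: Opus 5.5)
The plan is to combine the qualitative stability of Proposition~\ref{unique_loc} with an implicit function theorem argument in the spirit of \cite{Peyre}, exploiting that in the interior of a dual region everything is smooth, indeed linear in $w$.

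\textbf{Step 1: qualitative structure.} Since $w_0^i\in R_{\pi^i}$ is an interior point, for $\varepsilon_0$ small the ball $B_{\varepsilon_0}(w_0^i)$ meets only the stratum $R_{\pi^i}$ itself. Hence the family $\mathcal F^{I}_{\pi^i}$ reduces to one element and (ND) holds vacuously. Proposition~\ref{unique_loc} then yields $\alpha,\lambda_0$ such that every solution has the form $\mu_{\lambda,\zeta}=\sum_i c^i_{\lambda,\zeta}\sigma^i\delta_{w^i_{\lambda,\zeta}}$ with $c^i_{\lambda,\zeta}>0$, $w^i_{\lambda,\zeta}\in B_{\varepsilon_0}(w_0^i)\subset R_{\pi^i}$, and $(c_{\lambda,\zeta},w_{\lambda,\zeta})\to(c_0,w_0)$. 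The locations are unique because, by Lemma~\ref{unique_c.p.soft}, $w^i_{\lambda,\zeta}=z^i_{\lambda,\zeta}/\|z^i_{\lambda,\zeta}\|$ with $z^i_{\lambda,\zeta}=\sum_j p^j_{\lambda,\zeta}\pi^i_jx^j$, and $p_{\lambda,\zeta}$ is unique. By \eqref{oc_soft}, $K\mu_{\lambda,\zeta}=y_\zeta-\lambda p_{\lambda,\zeta}$ is also unique. Linear independence of $\{K\delta_{w_0^i}\}$ persists for $w$ near $w_0$ by continuity, so the coefficients, and hence $\mu_{\lambda,\zeta}$, are unique.

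\textbf{Step 2: the system for the rates.} For $(c,w)$ near $(c_0,w_0)$ with $w^i\in R_{\pi^i}$, set $\mu_{c,w}=\sum_i c^i\sigma^i\delta_{w^i}$. The map $w\mapsto K\delta_{w}$ is the linear map $w\mapsto(\pi^i_j\langle w,x^j\rangle)_j$ on $R_{\pi^i}$, hence smooth on the sphere. Optimality gives $p_{\lambda,\zeta}=(y_0+\zeta-K\mu_{c,w})/\lambda$. Moreover $\eta_{\lambda,\zeta}(w^i)=\sigma^i$ and $\nabla_{\mathbb S^{d-1}}\eta_{\lambda,\zeta}(w^i)=0$, the latter because $w^i$ is an interior extremum. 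Multiplying by $\lambda$, the true solution is a zero of
\begin{equation}
\Phi(c,w,\lambda,\zeta):=\Big(\big(K_*(K\mu_{c,w}-y_0-\zeta)(w^i)+\lambda\sigma^i\big)_{i},\ \big(K'_*(K\mu_{c,w}-y_0-\zeta)(w^i)\big)_{i}\Big)\in\R^{N}\times\textstyle\bigtimes_i T_{w^i}\mathbb S^{d-1}.
\end{equation}
This map is smooth in all variables, and $\Phi(c_0,w_0,0,0)=0$ since $K\mu_0=y_0$.

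\textbf{Step 3 (main obstacle): invertibility of $\partial_{(c,w)}\Phi$ at $(c_0,w_0,0,0)$.} Because $K\mu_0-y_0=0$, every term in which the derivative falls on $K_*(\cdot)(w^i)$ or $K'_*(\cdot)(w^i)$ vanishes. This is exactly why I work with the $\lambda$-scaled system: the possibly degenerate Hessian of $\eta$ never enters. What remains is $\Gamma^\top\Gamma D$, where $\Gamma^\top=\left[\begin{smallmatrix}\mathbf K(w_0)\\ \mathbf K'(w_0)\end{smallmatrix}\right]$ and $D=\mathrm{diag}(\sigma^i,\ \sigma^ic_0^i\,\mathrm{Id}_{T_{w_0^i}})$. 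Here $\partial_{c^i}K\mu=\sigma^iK\delta_{w^i}$ and $\partial_{w^i}K\mu=\sigma^ic^i\,(K'_*e^j(w^i))_j$. By \eqref{eq:fullrank} the matrix $\Gamma^\top\Gamma$ is invertible, and $D$ is invertible since $c_0^i>0$. The care needed is in matching the tangent-space parametrisation with the rows of $\mathbf K'(w_0)$; I would handle it through local charts around each $w_0^i$.

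\textbf{Step 4: conclusion.} The implicit function theorem gives a neighborhood $U$ of $(c_0,w_0)$ and a $C^1$ map $(\lambda,\zeta)\mapsto(c,w)(\lambda,\zeta)$ that is the unique zero of $\Phi$ in $U$. It satisfies $\|(c,w)(\lambda,\zeta)-(c_0,w_0)\|\le C(\lambda+\|\zeta\|)$. By Step 1, the true $(c_{\lambda,\zeta},w_{\lambda,\zeta})$ lies in $U$ for $(\lambda,\zeta)\in N_{\alpha,\lambda_0}$ with small parameters, and it solves $\Phi=0$, so it coincides with the implicit branch. Since $\|\zeta\|\le\alpha\lambda$, this gives \eqref{eq:Lipschitz_rate}.
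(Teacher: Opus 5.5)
Your proposal is correct and follows the paper's proof: Proposition~\ref{unique_loc} gives the qualitative $N$-atom structure and the convergence, and then the implicit function theorem is applied to the $\lambda$-scaled optimality system written in charts. There the Jacobian at $(c_0,w_0,0,0)$ factors as $\mathbf A_T(w_0)\mathbf A_T(w_0)^\top$ times an invertible block-diagonal matrix, which yields the $O(\lambda)$ rates. The one deviation is harmless: you get uniqueness of $\mu_{\lambda,\zeta}$ directly from uniqueness of $p_{\lambda,\zeta}$, which fixes both the atoms and $K\mu_{\lambda,\zeta}=y_\zeta-\lambda p_{\lambda,\zeta}$, combined with the openness of linear independence. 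The paper instead deduces uniqueness from the local uniqueness in the implicit function theorem. Your route is slightly more elementary and shows that \eqref{eq:fullrank} is needed only for the rates.
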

\begin{proof} 
Since the (LC) holds for $\mu_0$ and the measurements are linearly independent, we can apply Proposition \ref{unique_loc}. Once again we will carry out the proof considering $\sigma^i=1$ for simplicity. Therefore, there exist $\alpha>0, \lambda_0>0$ such that, for all $(\lambda, \zeta) \in N_{\alpha, \lambda_0}$, any solution $\mu_{\lambda,\zeta}$ is composed of exactly $N$ Dirac deltas, i.e.
$$
\mu_{\lambda,\zeta}=\sum_{i=1}^N c_{\lambda,\zeta}^i \delta_{w_{\lambda,\zeta}^i},
$$
where $c_{\lambda,\zeta}^i>0$ and $w_{\lambda,\zeta}^i \in R_{\pi^i} \cap \mathbb{S}^{d-1}$ such that $\eta_{\lambda,\zeta}( w_{\lambda,\zeta}^i)=1$ for every $i=1, \ldots, N$. In Proposition \ref{unique_loc} we also obtained the uniqueness of the locations $w_{\lambda,\zeta}^i$ inside each region $R_{\pi^i}$.
To conclude uniqueness of $\mu_{\lambda,\zeta}$ it remains to show the uniqueness of the coefficients $c_{\lambda,\zeta}^i$,
and to obtain the quantitative estimate \eqref{eq:Lipschitz_rate}.
We work in a neighborhood where each $w_0^i$ is interior to its activation region, so $\pi^i$ is fixed.
In particular, we choose $\varepsilon>0$ such that $B_\varepsilon(w_0^i)\cap\mathbb{S}^{d-1}\subset R_{\pi^i}\cap\mathbb{S}^{d-1}$ for all $i$. 
Now, we define \begin{equation}\label{eq:R_def}
R(c,w,\zeta):=K\sum_{i=1}^N c^i \delta_{w^i}-y_0-\zeta\in \R^n,
\qquad R(c_0,w_0,0)=0,
\end{equation}
for $c\in\R^N$ and $w=(w^1,\ldots,w^N)\in \prod_{i=1}^N\big(B_\varepsilon(w_0^i)\cap\mathbb{S}^{d-1}\big)$. Since $\mu_{\lambda,\zeta}$ satisfies the optimality conditions, $\eta_{\lambda,\zeta}$ reaches its maximum at $w_{\lambda,\zeta}^j$,
i.e.\ $K_*p_{\lambda,\zeta}(w_{\lambda,\zeta}^j)=\eta_{\lambda,\zeta}(w_{\lambda,\zeta}^j)=1$. Moreover, since $w_{\lambda, \zeta}^j$ lies in the interior of $R_{\pi^j}$, the map $\eta_{\lambda, \zeta}$ is $C^1$ near $w_{\lambda, \zeta}^j$, hence
$\nabla_{\mathbb{S}^{d-1}}\eta_{\lambda,\zeta}(w_{\lambda,\zeta}^j)=0$. Recalling
$-\lambda p_{\lambda,\zeta}=K\mu_{\lambda,\zeta}-y_0-\zeta$ and $\eta_{\lambda,\zeta}=K_*p_{\lambda,\zeta}$, we obtain
\begin{equation}\label{eq:implicit_eq_values}
K_*\big(R(c_{\lambda,\zeta},w_{\lambda,\zeta},\zeta)\big)(w_{\lambda,\zeta}^j)+\lambda=0,
\qquad j=1,\ldots,N,
\end{equation}
and
\begin{equation}\label{eq:implicit_eq_grad}
K_*'\big(R(c_{\lambda,\zeta},w_{\lambda,\zeta},\zeta)\big)(w_{\lambda,\zeta}^j)=0
\quad\text{in }T_{w_{\lambda,\zeta}^j}\mathbb{S}^{d-1},
\qquad j=1,\ldots,N.
\end{equation}
Moreover, for $(\lambda,\zeta)=(0,0)$ we have $K\mu_0=y_0$, hence the same equations hold at $(c_0,w_0)$. To apply the implicit function theorem, we introduce local charts around $w_0^{j}$ and express the second equation
\eqref{eq:implicit_eq_grad} in tangent coordinates.

For each $j$, we choose a smooth chart $\varphi_j:U_j\subset\R^{d-1}\to \mathbb{S}^{d-1}$ such that
$\varphi_j(0)=w_0^j$ and $\varphi_j(U_j)\subset B_\varepsilon(w_0^j)\cap\mathbb{S}^{d-1}$, and write
$u=(u^1,\ldots,u^N)\in U:=U_1\times\cdots\times U_N\subset\R^{N(d-1)}$ such that 
$w(u):=\big(\varphi_1(u^1),\ldots,\varphi_N(u^N)\big)$.
We can then choose, for each $j$, a $C^1$ family of linear isometries
\begin{equation}\label{eq:isometries}
\Pi_j(u^j):T_{\varphi_j(u^j)}\mathbb{S}^{d-1}\rightarrow \R^{d-1},
\qquad \Pi_j(u^j)\ \text{is an isometry for every }u^j\in U_j.
\end{equation}
Then, we can define $F=(F^1,\ldots,F^N):(\R^N\times U)\times N_{\alpha,\lambda_0}\to \R^{Nd}$ by
\begin{equation}\label{eq:implicit_function_coord}
F^j(c,u;\lambda,\zeta):=
\left(\begin{aligned}
K_*\big(R(c,w(u),\zeta)\big)\big(\varphi_j(u^j)\big)+\lambda\\[2pt]
\Pi_j(u^j)\,K_*'\big(R(c,w(u),\zeta)\big)\big(\varphi_j(u^j)\big)
\end{aligned}\right)\in\R^{1+(d-1)}=\R^d.
\end{equation}
Since the activation pattern is fixed on $\varphi_j(U_j)\subset R_{\pi^j}$ and the maps $\varphi_j,\Pi_j$ are $C^1$,
the map $(c,u,\lambda,\zeta)\mapsto F(c,u;\lambda,\zeta)$ is $C^1$ near $(c_0,0;0,0)$.
Moreover, \eqref{eq:implicit_eq_values}--\eqref{eq:implicit_eq_grad} are equivalent to
\begin{equation}\label{eq:F_zero}
F(c_{\lambda,\zeta},u_{\lambda,\zeta};\lambda,\zeta)=0,
\end{equation}
where $u_{\lambda,\zeta}$ is defined by $w_{\lambda,\zeta}^j=\varphi_j(u_{\lambda,\zeta}^j)$ (for $(\lambda,\zeta)$ small). It also holds that $F(c_0,0;0,0)=0$. Now, we differentiate $F$ with respect to $(c,u)$ and we check the invertibility of $D_{(c,u)}F(c_0,0;0,0)$. First, note that for fixed $w=(w^1,\ldots,w^N)$, we have
\begin{equation}
p\mapsto \mathbf K(w)\,p
=\big((K_*p)(w^1),\ldots,(K_*p)(w^N)\big)\in\R^N,
\end{equation}
and, in tangent coordinates defined by $\Pi_j(u^j)$,
\begin{equation}
p\mapsto \mathbf K'_T(w(u))\,p
:=\big(\Pi_1(u^1)K_*'p(\varphi_1(u^1)),\ldots,\Pi_N(u^N)K_*'p(\varphi_N(u^N))\big)\in\R^{N(d-1)}.
\end{equation}
At $u=0$ this gives the coordinate representation of $\mathbf K'(w_0)$, which we denote by $\mathbf K'_T(w_0)$. With this notation, we can define 
\begin{equation}
\mathbf A_T(w(u)):=
\left[\begin{array}{c}
\mathbf K(w(u))\\
\mathbf K'_T(w(u))
\end{array}\right]\in\R^{Nd\times n}.
\end{equation}
If we differentiate in $(c,u)$, we get an $Nd \times Nd$ matrix:
\begin{equation}\label{eq:Diff_F}
    D_{(c, u)}F =D_u\mathbf A_T(w(u))R(c, w(u), \zeta)+\mathbf A_T(w(u)) D_{(c, u)} R(c,w(u),\zeta),
\end{equation}
where the first term vanishes when evaluated at $(c_0,0;0,0)$, that is
\begin{equation}\label{eq:Diff_F_0}
    D_{(c, u)} F\left(c_0, 0 ; 0,0\right)=\mathbf A_T(w_0) D_{(c, u)} R\left(c_0, w_0, 0\right).
\end{equation}
Therefore, the Jacobian depends only on the first order variation of $R$. Next, differentiating
$R(c,w(u),\zeta)$ at $(c_0,0,0)$ yields the block form
\begin{equation}\label{eq:DR}
D_{(c,u)}R(c_0,w_0,0)=
\Big[\mathbf K(w_0)^\top\ \quad\ \mathbf K'_T(w_0)^\top\operatorname{diag}^{\,d-1}(c_0)\,M\Big],
\end{equation}
where $\operatorname{diag}^{\,d-1}(c_0)\in\R^{N(d-1)\times N(d-1)}$ is the block-diagonal matrix with $(d-1)$ copies of each $c_0^i$,
and $M\in\R^{N(d-1)\times N(d-1)}$ is the block-diagonal matrix
\begin{equation}
M:=\operatorname{diag}(M_1,\ldots,M_N),\qquad
M_j:=\Pi_j(0)\circ D\varphi_j(0)\in\R^{(d-1)\times(d-1)}.
\end{equation}
Since each $\varphi_j$ is a local chart and each $\Pi_j(0)$ is an isometry, every $M_j$ (hence $M$) is invertible.
Substituting \eqref{eq:DR} into \eqref{eq:Diff_F_0} we obtain the following factorization
\begin{equation}\label{eq:jac_factor}
D_{(c,u)}F(c_0,0;0,0)
=
\mathbf A_T(w_0)\,\mathbf A_T(w_0)^{\top}
\left[\begin{array}{cc}
I_N & 0\\
0 & \operatorname{diag}^{\,d-1}(c_0)\,M
\end{array}\right].
\end{equation}
Fix the linear isometries $\Pi_j(0):T_{w_0^j}\mathbb{S}^{d-1}\to\R^{d-1}$ for $j=1,\ldots,N$, and consider the following linear map
\begin{equation}
J:\R^N\bigtimes_{j=1}^N T_{w_0^j}\mathbb{S}^{d-1}\rightarrow \R^N\times \R^{N(d-1)}
\qquad
J(a;v_1,\ldots,v_N)=(a;\Pi_1(0)v_1,\ldots,\Pi_N(0)v_N).
\end{equation}
Then, $J$ is an isomorphism and the matrix $\mathbf A_T(w_0)$ is precisely the coordinate representation of the linear map
$J\circ \left[\begin{smallmatrix}\mathbf K(w_0)\\ \mathbf K'(w_0)\end{smallmatrix}\right]$. Since after the composition with an isomorphism the rank does not change,
\begin{equation}
\operatorname{rank}\!\left(\mathbf A_T(w_0)\right)
=\operatorname{rank}\!\left(\left[\begin{array}{c}\mathbf K(w_0)\\ \mathbf K'(w_0)\end{array}\right]\right).
\end{equation}
In particular, by \eqref{eq:fullrank} we have $\operatorname{rank}(\mathbf A_T(w_0))=Nd$ (full row rank), hence
$\mathbf A_T(w_0)\mathbf A_T(w_0)^\top$ is symmetric positive definite. Moreover, since $c_0^i>0$ and $M$ is invertible,
the block diagonal matrix in \eqref{eq:jac_factor} is invertible. Therefore, $D_{(c,u)}F(c_0,0;0,0)$ is invertible. The implicit function theorem applies. Hence, there exist neighborhoods $B_r(c_0,0)\subset \R^N\times U$ and $B_s((0,0))\subset \R\times \R^n$,
and a unique $C^1$ map
\begin{equation}
g:B_s((0,0))\to B_r(c_0,0),\qquad (\lambda,\zeta)\mapsto (c(\lambda,\zeta),u(\lambda,\zeta)),
\end{equation}
such that $g(0,0)=(c_0,0)$ and $F(g(\lambda,\zeta);\lambda,\zeta)=0$ for all $(\lambda,\zeta)\in B_s((0,0))$. In particular, since $g$ is $C^1$ at $(0,0)$, there exists $C>0$ such that, for $(\lambda,\zeta)$ sufficiently small,
\begin{equation}\label{eq:g_lip_sub}
\|(c(\lambda,\zeta),u(\lambda,\zeta))-(c_0,0)\|
\leq C\big(|\lambda|+\|\zeta\|\big).
\end{equation}
Mapping back to the sphere by $w^i(\lambda,\zeta):=\varphi_i(u^i(\lambda,\zeta))$ and using that $\varphi_i$ is $C^1$,
\eqref{eq:g_lip_sub} implies 
\begin{equation}\label{eq:g_lip}
\left|c_{\lambda,\zeta}^i-c_0^i\right|+\left\|w_{\lambda,\zeta}^i-w_0^i\right\|
\;\leq\; C\big(|\lambda|+\|\zeta\|\big).
\end{equation}
Finally, by Proposition \ref{unique_loc} the locations $w_{\lambda,\zeta}^i$ are uniquely determined inside each region $R_{\pi^i}$
and satisfy $w_{\lambda,\zeta}^i\to w_0^i$, together with $c_{\lambda,\zeta}^i\to c_0^i$. Therefore, for $(\lambda,\zeta)$ small enough, we have $(c_{\lambda,\zeta},u_{\lambda,\zeta})\in B_r(c_0,0)$.
Since $F(c_{\lambda,\zeta},u_{\lambda,\zeta};\lambda,\zeta)=0$ by \eqref{eq:F_zero}, uniqueness in the IFT implies
\begin{equation}
(c_{\lambda,\zeta},u_{\lambda,\zeta})=g(\lambda,\zeta),
\end{equation}
which proves that $c_{\lambda,\zeta}$ is unique and therefore $\mu_{\lambda,\zeta}$ is unique.
The rates \eqref{eq:Lipschitz_rate} follow from \eqref{eq:g_lip}, since on $N_{\alpha,\lambda_0}$ it holds $\|\zeta\|\leq \alpha\lambda$.
\end{proof}

%
%\begin{remark}
%    Note that if $N \geq n$, then $\mathbf K(w_0)$ full rank implies \eqref{eq:fullrank}. In particular, if $N=n$ then the linear independence of $\{K\delta_{w^i_0}\}$ implies \eqref{eq:fullrank}. Moreover if $n \geq N(d+1)$, then one recovers the setting in \cite{Peyre} where $n = \infty$ and \eqref{eq:fullrank} immediately implies the linear independence of $\{K\delta_{w^i_0}\}$.
%\end{remark}
\begin{remark}\label{rem:ND_not_needed}
Note that the condition \eqref{eq:fullrank} forces $n\geq Nd$. Moreover, the linear independence of $\{K\delta_{w_0^i}\}_{i=1}^N$ is not implied by \eqref{eq:fullrank} in general; it is assumed here in order to invoke Proposition~\ref{unique_loc} and obtain the localization of the reconstructed Dirac deltas. We note that \eqref{eq:fullrank} can be seen as a finite-dimensional analogue of the condition used in the infinite-dimensional measurement setting of \cite{Peyre}, where one works with an observation operator having $L^2$ measurements (heuristically $n=+\infty$).
Regarding the non-degeneracy condition we remark that we do not impose (ND), since the locations lie in the interior of the dual regions; in this case, (LC) is sufficient to apply Proposition~\ref{unique_loc}.

Finally, we remark that a similar argument would provide a linear decay of both the location and the coefficient errors for fixed $\lambda_0$ and a vanishing sequence of noise levels $\zeta$. In this case, the assumptions concerning the linear independence of the measurements and \eqref{eq:fullrank} should clearly be understood with respect to the zero-noise reconstruction with $\lambda =\lambda_0$.
\end{remark}

\section{Numerical experiments}

This section shows simple numerical simulations confirming the theoretical findings of this paper. For simplicity, we focus on the two-dimensional setting ($d=2$), where the weight domain is the unit circle $\mathbb S^1$. The experiments are designed to display the dual-region partition, the behaviour of the dual certificate $\eta_{\lambda,\zeta}$ on $\mathbb S^1$, and of the optimal solution $\mu_{\lambda,\zeta}$ depending on noise level and regularization parameter. In particular, we show the sparsity of the recovered measure (Theorem \ref{cor:exactreconstruction}), the relationship between the number of atoms and the regularization parameter (Lemma \ref{lem:crudebound}), and the linear convergence rates for the weights and locations under label noise (Theorem \ref{thm:int_rec}).
These examples are representative, since the aim here is not to optimize predictive performance but to isolate and visualize the results discussed in the previous sections.

\subsection{Model and experimental setup}
Given inputs $x_1,\dots,x_n\in\mathbb R^2$ and targets $y\in\mathbb R^n$, we consider the operator
\begin{equation}\label{eq:model_measure}
(K\mu)_j =\int_{\mathbb S^1}\sigma(\langle w,x^j\rangle)\,d\mu(w),
\qquad \sigma(t)={\rm ReLU}(t),
\end{equation}
and the TV-regularized empirical risk problem:
\begin{equation}\label{eq:tv_problem}
\min_{\mu\in M(\mathbb S^1)}\;
\frac12\sum_{j=1}^n((K\mu)_j-y_j)^2 +\lambda\|\mu\|_{\mathrm{TV}},
\qquad \lambda>0.
\end{equation}
Numerically, we approximate $\mu$ by an empirical measure with $m$ Dirac deltas:
\begin{equation}\label{eq:particle}
\mu =\sum_{i=1}^m c_i\,\delta_{w_i},
\qquad w_i\in\mathbb S^1,\quad c_i\in\mathbb R,
\end{equation}
which yields the finite-dimensional objective
\begin{equation}\label{eq:finite_objective}
\min_{\{(c_i,w_i)\}_{i=1}^m} \frac12\sum_{j=1}^n\Big(\sum_{i=1}^m c_i\,\sigma(\langle w_i,x_j\rangle)-y_j\Big)^2
+ \lambda\sum_{i=1}^m |c_i|.
\end{equation}
We enforce the constraint $w_i\in\mathbb S^1$ by parametrizing $w_i=u_i/\|u_i\|$ with unconstrained $u_i\in\mathbb R^2$ and optimizing over $(c_i,u_i)$. 

\subsection{Simulations}

We train the model by optimizing \eqref{eq:finite_objective} with ADAM using $m = 10^5$ and random initialization for $c$ and $u$. 
In all the experiments reported in Figures~\ref{fig:gated_vec_field}--\ref{fig:stability_2}, we use a fixed set of $n=5$ data points in $\mathbb R^2$:
\begin{equation}\label{eq:data_points}
x_1=(0.2,-0.1),\quad
x_2=(1.0,0.3),\quad
x_3=(1.0,0.0),\quad
x_4=(-0.4,0.9),\quad
x_5=(0.5,0.5)
\end{equation}
and targets
\begin{equation}\label{eq:targets}
y=(0.8,\,-0.1,\,0.3,\,-1.2,\,1.0).
\end{equation}
We recall that in the first three experiments the labels are not corrupted by noise ($\zeta=0$), while in the final experiment we vary the noise level to verify the convergence rates of Theorem \ref{thm:int_rec}.
The data points $x_i$ are \emph{non-collinear} and span multiple directions, so the hyperplane constraints $\langle w,x_j\rangle=0$ generate a nontrivial arrangement on $\mathbb S^1$ with multiple dual regions (see Figure \ref{fig:gated_vec_field}). 
As a regularization parameter, we choose $\lambda = 0.03$. Every $200$ iterations we prune the sequence by removing the atoms corresponding to coefficients with magnitude under the threshold $\tau_{prune} = 10^{-2}$.
The result is represented in Figure \ref{fig:gated_vec_field} and it corresponds to the solution 
\begin{align}\label{eq:fin}
    \mu_{\lambda,\zeta} = \sum_{i=1}^3 c_*^i \delta_{w_*^i},
\end{align}
where 
\begin{align*} 
w_*^1 = (-0.163 , 0.987),\, w_*^2 = (0.287 , -0.958), w_*^3 = (-0.708, 0.706),
\end{align*}
\begin{align*}
 c_*^1 = 1.312, c_*^2 = 1.256, c_*^3 = -2.577.
\end{align*}

\begin{figure}[h!]
    \centering
\includegraphics[width=0.5\textwidth]{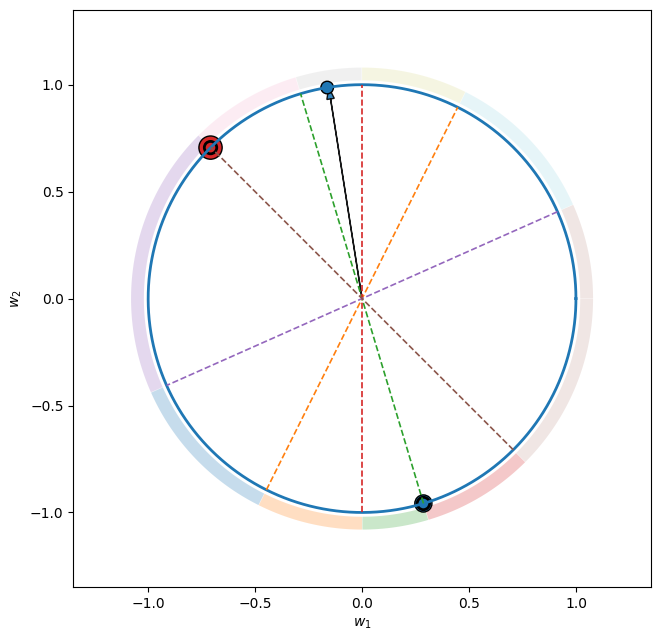}
     \caption{\textbf{Dual region decomposition of $\mathbb S^1$ and optimal solution $\mu_{\lambda,\zeta}$.}
    The dashed diameters correspond to the boundaries $\langle w,x_j\rangle=0$ induced by the five inputs~\eqref{eq:data_points}, partitioning $\mathbb S^1$ into sectors on which $(\mathbf 1\{\langle w,x_j\rangle>0\})_{j=1}^5$ is constant.
     We display the recovered atoms $\{w_i\}_{i=1}^3$  of $\mu_{\lambda,\zeta}$. The arrow corresponds to the vector $\frac{\tilde z_{\lambda,\zeta}}{\|\tilde z_{\lambda,\zeta}\|}$ as in \eqref{eq:gated_vector_2} with $\tilde z_{\lambda,\zeta}$ defined in \eqref{eq:gated_vector_1}. As predicted by Lemma \ref{unique_c.p.soft}, this vector aligns with the location of the Dirac delta of the solution $\mu_{\lambda,\zeta}$ belonging to the interior of a dual region.}
    \label{fig:gated_vec_field}
\end{figure}
We also study the behaviour of the dual certificate $\eta_{\lambda,\zeta} \in C(\mathbb{S}^{d-1})$ corresponding to the computed solution $\mu_{\lambda,\zeta}$. In particular, we show that it reaches the maximum $\eta_{\lambda,\zeta}= 1$ at the location of the positive Dirac deltas of the optimal solution; similarly, it reaches the minimum $\eta_{\lambda,\zeta} = -1$ at the location of the negative Dirac deltas. Such behaviour is a clear indication that the optimization procedure has reached a minimizer of the problem.

Moreover, we note that the dual certificate aligns with the unique (positive) Dirac delta of $\mu_{\lambda, \zeta}$ lying in the interior of a dual region, which we denote by $w_* \in R_\pi$.
As described in Lemma \ref{unique_c.p.soft}, it holds that, for $p_{\lambda,\zeta} = -\frac{1}{\lambda}(K\mu_{\lambda,\zeta} - y_{\zeta})$ (see \eqref{oc_soft}) and $\tilde z_\lambda = \sum_{i} p_\lambda^i \pi_i x^i$ (see  \eqref{eq:gated_vector_1}),
\begin{align}
    \eta_{\lambda,\zeta} = {\rm sign}(w_*)\frac{\tilde z_{\lambda,\zeta}}{\|\tilde z_{\lambda,\zeta}\|}.
\end{align}

\begin{figure}[!t]
    \centering
\includegraphics[width=0.5\textwidth]{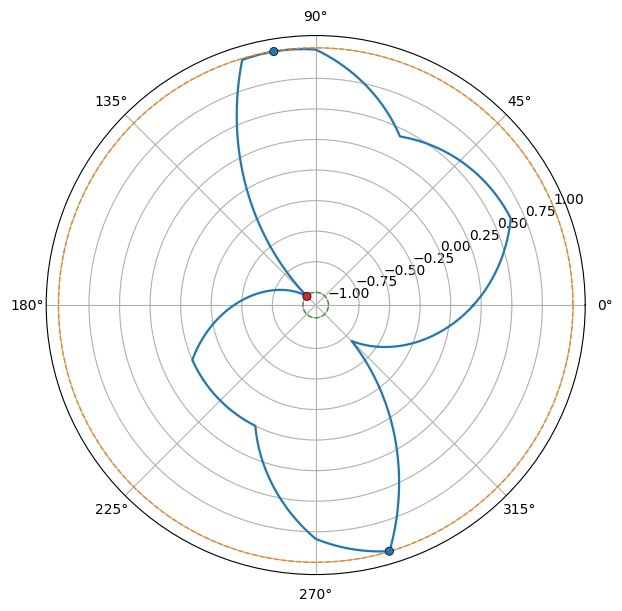}
    \caption{\textbf{Dual certificate $\eta_{\lambda,\zeta}$ on $\mathbb S^1$.}
    The dashed circles (orange and green) show the locations where $\eta_{\lambda,\zeta}=\pm 1$. As expected by the optimality conditions, the dual certificate is always between $-1$ and $+1$.
    The points where $\eta_\lambda = 1$ (resp. $\eta_\lambda = -1$) correspond exactly to the location of positive (resp. negative) Dirac deltas in the TV-regularized solution \eqref{eq:fin}.}
    \label{fig:dual_cert}
\end{figure}

In Figure \ref{fig:number_atoms}, while keeping fixed the data and labels, we modify the value of the regularization parameter to observe how the number of reconstructed Dirac deltas varies in the solution $\mu_{\lambda,\zeta}$. We note that when the regularization parameter decreases, the number of Dirac deltas increases, while staying below the geometric upper bound given by Lemma \ref{lem:crudebound}, which in this case corresponds to $10$.

\begin{figure}[!htbp]
    \centering
\includegraphics[width=0.6\textwidth]{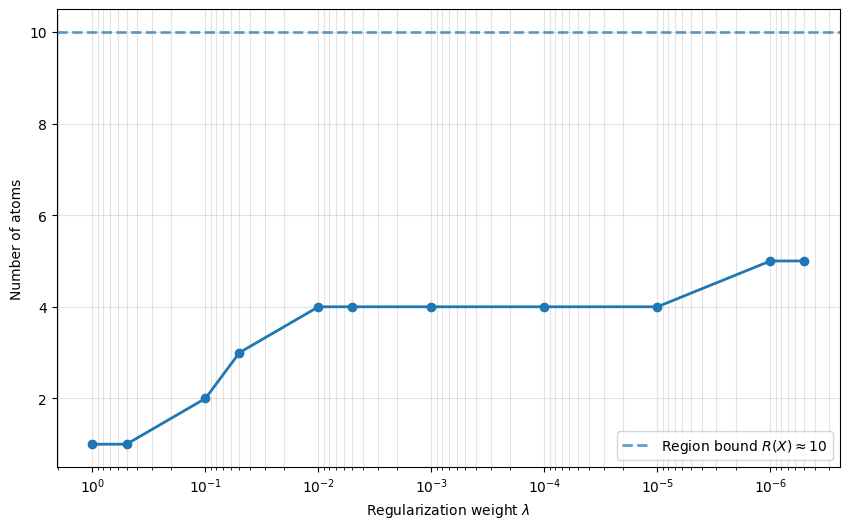}
   \caption{\textbf{Size of the support depending on the regularization parameter.}
    For $\lambda$ in the interval  ($\lambda_{\max}=1$, $\lambda_{\min}=5\cdot10^{-7}$, $11$ values), we compute the cardinality of the support of the solution. As expected, the number of Dirac deltas increases with the decrease of the regularization parameter $\lambda$. The maximum number of Dirac deltas is, however, still below the theoretical bound determined by the number of regions: $R(X) = 10$.}
    \label{fig:number_atoms}
\end{figure}

\medskip

Finally, in the last experiment, we verify the decay of the coefficients and locations of the Dirac deltas in the optimal solution $\mu_{\lambda,\zeta}$ predicted by Theorem \ref{thm:int_rec} (see also Remark \ref{rem:ND_not_needed}).  We again work in dimension $d=2$ and we consider $n=5$ data points in $\mathbb R^2$:
\begin{equation}\label{eq:data_points_2}
x_1=(0.2,-0.1),\quad
x_2=(1.0,-2),\quad
x_3=(1.0,0.2),\quad
x_4=(-0.4,-0.2),\quad
x_5=(0.5,0.5)
\end{equation}
and targets
\begin{equation}\label{eq:targets_2}
y=(0.8,\,-0.1,\,0.3,\,-1.2,\,1.0).
\end{equation}
We fix $\lambda=0.2$ and we compute the solution $\mu_{\lambda,\zeta} = \sum_{i=1}^2 c_*^i \delta_{w_*^i}$ with no noise. The result is depicted in Figure \ref{fig:stability_1}, where we observe that both $w_*^i$ lie in the interior of a dual region.

\begin{figure}[!t] 
\begin{minipage}{0.49\textwidth}
    \centering
\includegraphics[width=0.9\textwidth]{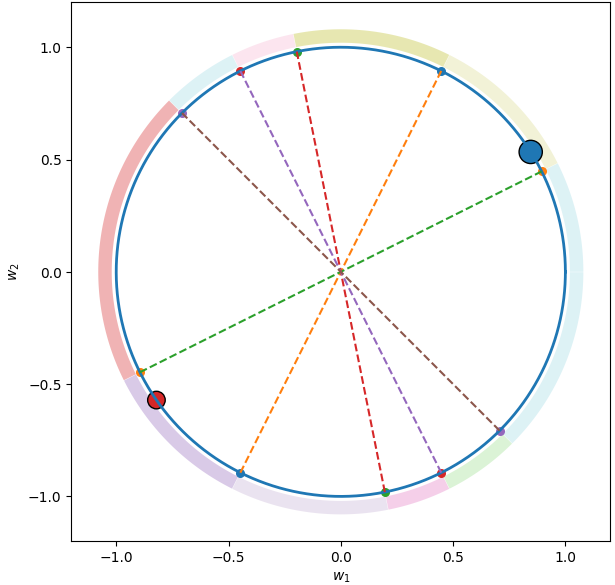} 
\end{minipage}
\begin{minipage}
{0.49\textwidth}
  \centering
\includegraphics[width=0.9\textwidth]{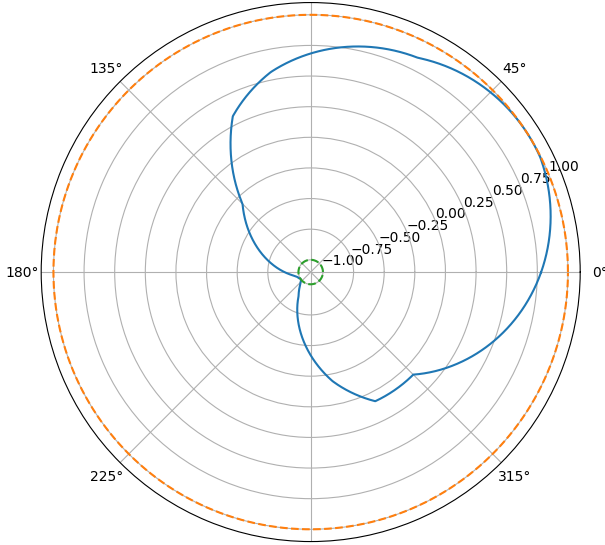}    
\end{minipage}
\caption{\textbf{Unperturbed solution and dual certificate for $\lambda = 0.2$.}
\emph{Left:} Dual region decomposition of $\mathbb{S}^1$ and recovered 
atoms $\{w_*^i\}_{i=1}^2$ of the solution 
$\mu_{\lambda,\zeta} = \sum_{i=1}^2 c_*^i\,\delta_{w_*^i}$ 
with $\zeta = 0$. Both atoms lie in the interior of a dual region.
\emph{Right:} Corresponding dual certificate $\eta_{\lambda,\zeta}$ 
on $\mathbb{S}^1$. As in Figure~\ref{fig:dual_cert}, it satisfies 
$|\eta_{\lambda,\zeta}| \leq 1$ everywhere and attains $\pm 1$ 
exactly at the support of $\mu_{\lambda,\zeta}$.}
\label{fig:stability_1}
 \end{figure}

While keeping $\lambda$ fixed, we perturb $y$ by increasingly large noise $\zeta$, ranging from $5\cdot 10^{-4}$ to $2.4 \cdot 10^{-3}$. We compute the optimal solution $\mu_{\lambda,\zeta} = \sum_{i=1}^2 c_\zeta^i \delta_{w_\zeta^i}$ for all values of $\zeta$ and in Figure \ref{fig:stability_2} we plot the quantities
\begin{align*}
L^i_c(\zeta) = |c_{\lambda,\zeta}^i -  c_*^i| \quad \text{and} \quad L^i_w(\zeta) = \|w_{\lambda,\zeta}^i -  w_*^i\| \quad \text{for} \quad i=1,2.
\end{align*}
As predicted by Theorem \ref{thm:int_rec}, we observe that these quantities decay linearly in $\|\zeta\|$, consistently with the bound $O(\lambda)$ along $N_{\alpha, \lambda_0}$ where $\|\zeta\| \leq \alpha \lambda$. In particular, we have $L^i_c(\zeta) \leq C_c^i\|\zeta\|$ and $L^i_w(\zeta) \leq C_w^i\|\zeta\|$ for constants $C_c^i$ and $C_w^i$ with $i=1,2$. 

One can also verify that the assumptions of Theorem~\ref{thm:int_rec} are indeed 
satisfied. Since both atoms $w_*^1, w_*^2$ 
lie in the interior of their respective dual regions 
$R_{\pi_1}, R_{\pi_2}$, the non-degeneracy condition~(ND) is not 
required (cf.\ Remark~\ref{rem:ND_not_needed}). The localization 
condition~(LC) is verified by inspecting the dual certificate 
$\eta_{\lambda,\zeta}$ in Figure~\ref{fig:stability_1} (right), where 
we observe that $|\eta_{\lambda,\zeta}(w)| < 1$ for all 
$w \in \mathbb{S}^1 \setminus \{w_*^1, w_*^2\}$. To verify the linear independence of 
$\{K\delta_{w_*^1}, K\delta_{w_*^2}\}$ we apply 
Lemma~\ref{lem:lin_ind_meas}. The activation patterns of the two 
atoms are $\pi_1 = (1,0,1,0,1)$ and $\pi_2 = (1,1,0,1,0)$. 
The $2 \times 2$ minor of $\Pi$ corresponding to columns 
$1$ and $2$ equals 
$\begin{psmallmatrix} 1 & 0 \\ 1 & 1 \end{psmallmatrix}$, 
which satisfies 
$\operatorname{perm}(\Pi_N) = |\det(\Pi_N)| = 1$, so 
conditions~(i) and~(ii) of Lemma~\ref{lem:lin_ind_meas} are fulfilled. Finally, we verify condition~\eqref{eq:fullrank}. Since $d = 2$, 
the tangent space $T_{w_*^i}\mathbb{S}^1$ is one-dimensional, 
spanned by $(-w_{*,2}^i,\, w_{*,1}^i)$. 
%In 
%this basis, the entries of the derivative matrix 
%$\mathbf{K}'(w_*) \in \mathbb{R}^{2 \times 5}$ read
%\[
%[\mathbf{K}'(w_*)]_{ij} = \pi_j^i\,\langle x^j, {w_*^i}^\perp \rangle, 
%\qquad i = 1,2,\quad j = 1,\ldots,5.
%\]
We numerically evaluate the matrix
\[
\begin{bmatrix} \mathbf{K}(w_*) \\ \mathbf{K}'(w_*) \end{bmatrix} 
\in \mathbb{R}^{4 \times 5}
\]
and obtain singular values approximately 
$2.243$, $1.204$, $0.472$, $0.365$, confirming that its rank is $4 = Nd$. 
Hence, all hypotheses of Theorem~\ref{thm:int_rec} are fulfilled.

%As a consistency check we also verify that the Assumptions of Theorem \ref{thm:int_rec} are satisfied. In particular 
%the matrix 
%\[\left[\begin{array}{c}
  %     \mathbf K(w_*) \\
 %       \mathbf K'(w_*) 
 %  \end{array}\right] \in \]
%has rank equal to  with singular values.

\begin{figure}[h!]
\begin{minipage}{0.49\textwidth}
    \centering    \includegraphics[width=0.9\textwidth]{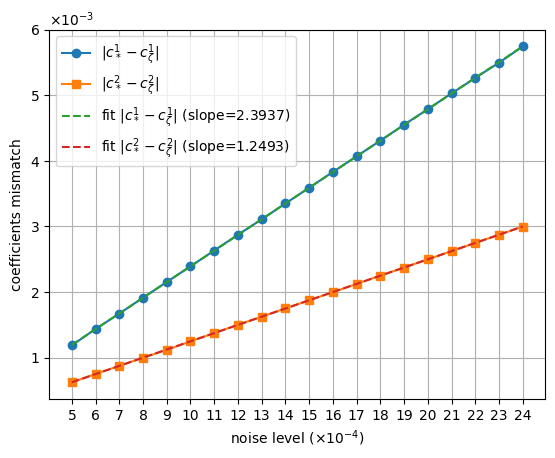} 
\end{minipage}
\begin{minipage}
{0.49\textwidth}
  \centering
\includegraphics[width=0.9\textwidth]{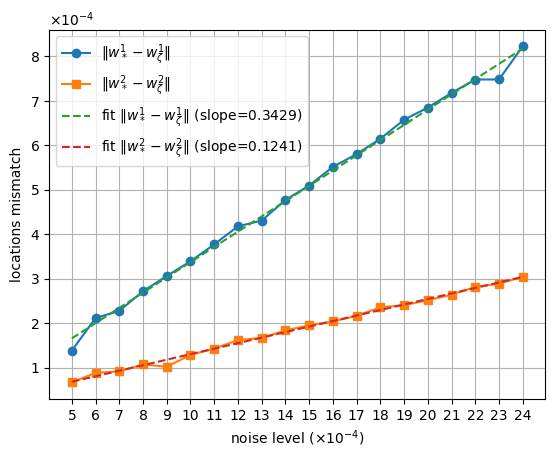}    
\end{minipage}
\caption{\textbf{Linear decay of coefficients and locations under 
label noise.}
\emph{Left:} Coefficient mismatch 
$L_c^i(\zeta) = |c_{\lambda,\zeta}^i - c_*^i|$ as a function of 
the noise level $\|\zeta\|$ for $i=1,2$, together with linear fits.
\emph{Right:} Location mismatch 
$L_w^i(\zeta) = \|w_{\lambda,\zeta}^i - w_*^i\|$ for $i=1,2$.
Both quantities exhibit the linear dependence on $\|\zeta\|$ 
predicted by Theorem \ref{thm:int_rec}.}
\label{fig:stability_2}
 \end{figure}

\bibliographystyle{plain}
		\bibliography{BibESRRNN}

%\appendix
%\large{\section*{Appendix}}
%\renewcommand{\thesection}{A} 

%\normalsize
%\subsection{Uniqueness of the weights}

\end{document}